\newtheorem{thm}{Theorem}[section]
\newtheorem{lem}[thm]{Lemma}
\newtheorem{cor}[thm]{Corollary}
\newtheorem{pro}[thm]{Proposition}
\theoremstyle{definition}
\newtheorem{rmk}[thm]{Remark}
\newtheorem{defi}[thm]{Definition}
\newcommand{\nc}{\newcommand}
\newcommand{\delete}[1]{}
\nc{\mlabel}[1]{\label{#1}}  
\nc{\mcite}[1]{\cite{#1}}  
\nc{\mref}[1]{\ref{#1}}  
\nc{\mbibitem}[1]{\bibitem{#1}} 
\nc{\mlabel}[1]{\label{#1}{\hfill \hspace{1cm}{\bf{{\ }\hfill(#1)}}}}
\nc{\mcite}[1]{\cite{#1}{{\em{{\ }(#1)}}}}  
\nc{\mref}[1]{\ref{#1}{{\em{{\ }(#1)}}}}  
\nc{\mbibitem}[1]{\bibitem[\em #1]{#1}} 
\newcommand {\emptycomment}[1]{}
\nc{\oprn}{\theta}
\nc{\Oprn}{\Theta}
\nc{\calo}{\mathcal{O}}
\nc{\oop}{$\mathcal{O}$-operator\xspace}
\nc{\oops}{$\mathcal{O}$-operators\xspace}
\nc{\mrho}{{\bm{\varrho}}}
\nc{\emk}{\mathbf{K}}
\nc{\invlim}{\displaystyle{\lim_{\longleftarrow}}\,}
\nc{\ot}{\otimes}
\newcommand{\lon }{\,\rightarrow\,}
\newcommand{\be }{\begin{equation}}
\newcommand{\ee }{\end{equation}}
\newcommand{\LR}{$\mathsf{Lie}\mathsf{Rep}$~}
\newcommand{\Coder}{ \mathsf{Coder}}
\newcommand{\MC}{$\mathsf{MC}$~}
\newcommand{\g}{\mathfrak g}
\newcommand{\h}{\mathfrak h}
\newcommand{\huaF}{\mathcal{F}}
\newcommand{\huaX}{\mathcal{X}}
\newcommand{\huaD}{\mathcal{D}}
\newcommand{\huaH}{\mathcal{H}}
\newcommand{\huaO}{{\mathcal{O}}}
\newcommand{\huaT}{\mathcal{T}}
\newcommand{\frki}{\mathfrak i}
\newcommand{\frkl}{\mathfrak l}
\newcommand{\frkp}{\mathfrak p}
\newcommand{\frkC}{\mathfrak C}
\newcommand{\frkX}{\mathfrak X}
\newcommand{\frkY}{\mathfrak Y}
\newcommand{\half}{\frac{1}{2}}
\newcommand{\Courant}[1]{\left\llbracket  #1\right\rrbracket }
\newcommand{\Id}{{\rm{Id}}}
\newcommand{\br}[1]{   [ \cdot,    \cdot  ]   }
\newcommand{\dM}{\mathrm{d}}
\newcommand{\Hom}{\mathrm{Hom}}
\newcommand{\Lie}{\mathrm{Lie}}
\newcommand{\R}{\mathsf{R}}
\newcommand{\RB}{\mathrm{RB}}
\newcommand{\TLB}{\mathrm{TLB}}
\newcommand{\CE}{\mathsf{CE}}
\newcommand{\SN}{\mathsf{SN}}
\newcommand{\NR}{\mathsf{NR}}
\newcommand{\gl}{\mathfrak {gl}}
\newcommand{\ad}{\mathrm{ad}}
\newcommand{\pr}{\mathrm{pr}}
\newcommand{\Img}{\mathrm{Im}}
\newcommand{\K}{\mathbf{K}}
\newcommand{\Sym}{\mathsf{S}}
\newcommand{\Ten}{\mathsf{T}}
\nc{\CV}{\mathbf{C}}
\newcommand{\MN}{\mathsf{MN}}
\newcommand{\noproof}{\begin{flushright} \ensuremath{\square}\end{flushright}}
\begin{document}

\title[Deformations and homotopy theory  of relative Rota-Baxter Lie algebras]{Deformations and homotopy theory   of relative Rota-Baxter Lie algebras}

\author{Andrey Lazarev}
\address{Department of Mathematics and Statistics, Lancaster University, Lancaster LA1 4YF, UK}
\email{a.lazarev@lancaster.ac.uk}

\author{Yunhe Sheng}
\address{Department of Mathematics, Jilin University, Changchun 130012, Jilin, China}
\email{shengyh@jlu.edu.cn}

\author{Rong Tang}
\address{Department of Mathematics, Jilin University, Changchun 130012, Jilin, China}
\email{tangrong@jlu.edu.cn}


\begin{abstract}
We determine the \emph{$L_\infty$-algebra} that controls deformations of a relative Rota-Baxter Lie algebra and show that it is an extension of the dg Lie algebra controlling deformations of the underlying \LR pair by the dg Lie algebra controlling deformations of the relative Rota-Baxter operator. Consequently, we define the {\em cohomology} of relative Rota-Baxter Lie algebras and relate it to their infinitesimal deformations. A large class of relative Rota-Baxter Lie algebras is obtained from triangular Lie bialgebras and we construct a map between the corresponding deformation complexes. Next, the notion of a \emph{homotopy} relative Rota-Baxter Lie algebra is introduced. We show that a class of homotopy relative Rota-Baxter Lie algebras is intimately related to \emph{pre-Lie$_\infty$-algebras}.

\end{abstract}


\keywords{Cohomology, deformation, $L_\infty$-algebra, MC element,   Rota-Baxter algebra, $r$-matrix,  triangular Lie bialgebra}

\maketitle

\vspace{-1.1cm}

\tableofcontents

\allowdisplaybreaks

\section{Introduction}\mlabel{sec:intr}

In this paper we initiate the study of  deformations and cohomology of relative Rota-Baxter Lie algebras and their homotopy versions.

\subsection{Rota-Baxter operators}

The concept of Rota-Baxter operators on associative algebras was
introduced   by G. Baxter \cite{Ba} in his study of
fluctuation theory in probability. Recently it has found many
applications, including Connes-Kreimer's~\cite{CK} algebraic
approach to the renormalization in perturbative quantum field
theory. Rota-Baxter operators lead to the splitting of
operads~\cite{BBGN,PBG}, and are closely related to noncommutative
symmetric functions and Hopf algebras \cite{Fard,Gu0-1,Yu-Guo}.  Recently the relationship between Rota-Baxter operators and double Poisson algebras were studied in \cite{Goncharov}.
In the Lie algebra context, a Rota-Baxter operator was introduced independently in the 1980s as the
operator form of the classical Yang-Baxter equation that plays important roles in many
subfields of mathematics and mathematical physics such as integrable
systems and quantum groups \cite{CP,STS}. For further details on
Rota-Baxter operators, see ~\cite{Gub-AMS,Gub}.

To better understand the classical Yang-Baxter equation and
 related integrable systems, the more general notion of an \oop (later also called
a relative Rota-Baxter operator or a generalized Rota-Baxter operator)
on a Lie algebra was introduced by Kupershmidt~\cite{Ku};
this notion can be traced back to Bordemann
\cite{Bor}. Relative Rota-Baxter operators provide solutions of the classical Yang-Baxter equation in the semidirect product Lie algebra and give rise to pre-Lie algebras \cite{Bai}.

\subsection{Deformations}

The concept of a formal deformation of an algebraic structure began with the seminal
work of Gerstenhaber~\cite{Ge0,Ge} for associative
algebras. Nijenhuis and Richardson   extended this study to Lie algebras
~\cite{NR,NR2}. More generally, deformation theory
for algebras over quadratic operads was developed by Balavoine~\cite{Bal}. For more
general operads we refer the reader to \cite{KSo,LV,Ma}, and the references therein.

There is a well known slogan, often attributed to Deligne, Drinfeld and Kontsevich:  {\em every reasonable deformation theory is controlled by a differential graded (dg) Lie algebra, determined up to quasi-isomorphism}. This slogan has been made into a rigorous theorem by Lurie  and Pridham, cf. \cite{Lu,Pr}, and a recent simple treatment in  \cite{GLST}.

It is also meaningful to deform {\em maps} compatible with given algebraic structures. Recently, the deformation theory of morphisms was   developed in \cite{Borisov,Fregier-Zambon-1,Fregier-Zambon-2},  the deformation theory of $\huaO$-operators was   developed in \cite{TBGS-1} and the deformation theory of diagrams of algebras was studied in \cite{Barmeier,Fregier} using the minimal model of operads and the method of derived brackets \cite{Kosmann-Schwarzbach,Ma-0,Vo}.

Sometimes a dg Lie algebra up to quasi-isomorphism  controlling a deformation theory manifests itself naturally as an \emph{$L_\infty$-algebra}. This often happens when one tries to deform several algebraic structures as well as a compatibility relation between them, such as  diagrams of algebras mentioned above.
We will see that this also happens in the study of  deformations  of a relative Rota-Baxter Lie algebra, which consists of a Lie algebra, its representation and a relative Rota-Baxter operator (see Definition \ref{defi:O} below). We apply Voronov's higher derived brackets construction \cite{Vo} to construct the $L_\infty$-algebra that characterizes  relative Rota-Baxter Lie algebras as Maurer-Cartan ($\mathsf{MC}$) elements in it. This leads, by a well-known procedure of twisting, to an $L_\infty$-algebra controlling deformations of relative Rota-Baxter Lie algebras. Moreover, we show that this  $L_\infty$-algebra is an extension of the dg Lie algebra   that controls deformations of \LR pairs (a \LR pair consists of a Lie algebra and a representation) given in \cite{Arnal} by the  dg Lie algebra that controls deformations of relative Rota-Baxter operators given in \cite{TBGS-1}.

\subsection{Cohomology theories}

A classical approach for studying a mathematical structure is associating invariants to it. Prominent among these are cohomological invariants, or simply cohomology, of various types of algebras. Cohomology controls deformations and extension problems of the corresponding algebraic structures. Cohomology theories of various kinds of algebras have been developed and studied in \cite{Ch-Ei,Ge0,Har,Hor}. More recently these classical constructions have been extended to strong homotopy (or infinity) versions of the algebras, cf. for example \cite{Hamilton-Lazarev}.

In the present paper we study the cohomology theory for relative Rota-Baxter Lie algebras. A relative Rota-Baxter Lie algebra consists of a Lie algebra, its representation and an operator on it together with appropriate compatibility conditions. Constructing the corresponding cohomology theory is not straightforward due to the complexity of these data. We solve this problem by constructing a deformation complex for a {\em relative} Rota-Baxter Lie algebra and endowing it with an \emph{$L_\infty$-structure}. Infinitesimal deformations of relative Rota-Baxter Lie algebras are classified by the second cohomology group. Moreover, we show that there is a long exact sequence of cohomology groups linking the cohomology of \LR pairs introduced in \cite{Arnal}, the cohomology of $\huaO$-operators introduced in \cite{TBGS-1} and the cohomology of relative Rota-Baxter Lie algebras.

The above general framework has two important special cases: Rota-Baxter Lie algebras and triangular Lie bialgebras and we introduce the corresponding cohomology theories for these objects.  We also show that infinitesimal deformations of Rota-Baxter Lie algebras and triangular Lie bialgebras are classified by the corresponding second cohomology groups.

\subsection{Homotopy invariant construction of Rota-Baxter Lie algebras}
Homotopy invariant algebraic structures play a prominent role in modern mathematical physics  ~\cite{St19}.
Historically, the first such structure was that of an $A_\infty$-algebra introduced by Stasheff
in his study of based loop spaces~\cite{Sta63}.
Relevant later  developments include the work of   Lada and Stasheff~\cite{LS,stasheff:shla} about
$L_\infty$-algebras in mathematical physics
and the work of Chapoton and Livernet~\cite{CL} about
pre-Lie$_\infty$-algebras. Strong homotopy (or infinity-) versions of a large class of algebraic
structures were studied in the context of
operads in ~\cite{LV, MSS}.

Dotsenko and Khoroshkin studied the homotopy of Rota-Baxter operators on associative algebras in ~\cite{DK}, and noted that ``in general compact formulas
are yet to be found".  For  Rota-Baxter  Lie algebras, one encounters a similarly challenging situation. In this paper, we use the approach of $L_\infty$-algebras and their $\mathsf{MC}$ elements to formulate the notion of a (strong) homotopy version of a relative Rota-Baxter Lie algebra, which consists of an $L_\infty$-algebra, its representation and a homotopy relative Rota-Baxter operator. We show that strict homotopy relative Rota-Baxter operators give rise to pre-Lie$_\infty$-algebras, and conversely the identity map is a strict homotopy relative Rota-Baxter operator on the subadjacent $L_\infty$-algebra of a  pre-Lie$_\infty$-algebra.

\subsection{Outline of the paper}
In Section \ref{ss:mce}, we briefly recall the deformation theory and the cohomology  of \LR pairs and relative Rota-Baxter operators. In Section \ref{sec:deform}, we establish the deformation theory of relative Rota-Baxter Lie algebras. In Section \ref{sec:cohomology}, we introduce the corresponding cohomology theory and explain how it is related to infinitesimal deformations of relative Rota-Baxter Lie algebras in the usual way. In Section \ref{sec:cohomologyRB}, we study the cohomology theory of  Rota-Baxter Lie algebras. In Section \ref{sec:cohomologyTLB}, we explain how the cohomology theories of  triangular Lie bialgebras and of relative Rota-Baxter Lie algebras are related. In Section \ref{sec:homotopy}, we introduce the notion of a homotopy relative Rota-Baxter operator and characterize it as an \MC element in a certain  $L_\infty$-algebra. Finally, we exhibit a close relationship between homotopy relative Rota-Baxter Lie algebras of a certain kind and pre-Lie$_\infty$-algebras.

\subsection{Notation and conventions}

Throughout this paper, we work with a coefficient field $\K$ which is  of characteristic 0, and $\R$ is a pro-Artinian $\K$-algebra, that is a projective limit of local Artinian $\K$-algebras.

A permutation $\sigma\in\mathbb S_n$ is called an {\em $(i,n-i)$-shuffle} if $\sigma(1)<\cdots <\sigma(i)$ and $\sigma(i+1)<\cdots <\sigma(n)$. If $i=0$ or $n$, we assume $\sigma=\Id$. The set of all $(i,n-i)$-shuffles will be denoted by $\mathbb S_{(i,n-i)}$. The notion of an $(i_1,\cdots,i_k)$-shuffle and the set $\mathbb S_{(i_1,\cdots,i_k)}$ are defined analogously.

Let $V=\oplus_{k\in\mathbb Z}V^k$ be a $\mathbb Z$-graded vector space.
We will denote by $\Sym(V)$ the {\em symmetric  algebra}  of $V$. That is,
$
\Sym(V):=\Ten(V)/I,
$
where $\Ten(V)$ is the tensor algebra and $I$ is the $2$-sided ideal of $\Ten(V)$ generated by all homogeneous elements  of the form
$
x\otimes y-(-1)^{xy}y\otimes x.
$
We will write $\Sym(V)=\oplus_{i=0}^{+\infty}\Sym^i (V)$.
Moreover, we denote the reduced symmetric  algebra by $\bar{\Sym}(V):=\oplus_{i=1}^{+\infty}\Sym^{i}(V)$. Denote the product of homogeneous elements $v_1,\cdots,v_n\in V$ in $\Sym^n(V)$ by $v_1\odot\cdots\odot v_n$. The degree of $v_1\odot\cdots\odot v_n$ is by definition the sum of the degrees of $v_i$. For a permutation $\sigma\in\mathbb S_n$ and $v_1,\cdots, v_n\in V$,  the {\em Koszul sign} $\varepsilon(\sigma)=\varepsilon(\sigma;v_1,\cdots,v_n)\in\{-1,1\}$ is defined by
\begin{eqnarray*}
	v_1\odot\cdots\odot v_n=\varepsilon(\sigma;v_1,\cdots,v_n)v_{\sigma(1)}\odot\cdots\odot v_{\sigma(n)}.
\end{eqnarray*}
The {\em desuspension operator} $s^{-1}$ changes the grading of $V$ according to the rule $(s^{-1}V)^i:=V^{i+1}$. The  degree $-1$ map $s^{-1}:V\lon s^{-1}V$ is defined by sending $v\in V$ to its   copy $s^{-1}v\in s^{-1}V$.

A degree $1$ element $\theta\in\g^1$ is called an {\em \MC element}   of a differential graded Lie algebra $( \oplus_{k\in\mathbb Z}\g^k,[\cdot,\cdot],d)$ if it
satisfies the  {\em \MC equation}:
$
d \theta+\half[\theta,\theta]=0.$

\section{Maurer-Cartan characterizations  of \LR pairs and  relative Rota-Baxter operators}\label{ss:mce}

\subsection{Bidegrees and the Nijenhuis-Richardson bracket}
Let $\g$ be a vector space. For all $n\ge 0$, set $C^n(\g,\g):=\Hom(\wedge^{n+1}\g,\g).$
Let $\g_1$ and $\g_2$ be two vector spaces and elements in $\g_1$ will be denoted by $x,y,z, x_i$ and elements in $\g_2$ will be denoted by $u,v,w,v_i$.
For a   multilinear map $f:\wedge^{k}\g_1\otimes\wedge^{l}\g_2\lon\g_1$, we define  $\hat{f}\in C^{k+l-1}\big(\g_1\oplus\g_2,\g_1\oplus\g_2\big)$ by
\begin{eqnarray*}
\hat{f}\big((x_1,v_1),\cdots,(x_{k+l},v_{k+l})\big):=\sum_{\tau\in\mathbb S_{(k,l)}}(-1)^{\tau}\Big(f(x_{\tau(1)},\cdots,x_{\tau(k)},v_{\tau(k+1)},\cdots,v_{\tau(k+l)}),0\Big).
\end{eqnarray*}
Similarly, for $f:\wedge^{k}\g_1\otimes\wedge^{l}\g_2\lon\g_2$, we define   $\hat{f}\in C^{k+l-1}\big(\g_1\oplus\g_2,\g_1\oplus\g_2\big)$ by
\begin{eqnarray*}
\hat{f}\big((x_1,v_1),\cdots,(x_{k+l},v_{k+l})\big):=\sum_{\tau\in\mathbb S_{(k,l)}}(-1)^{\tau}\Big(0,f(x_{\tau(1)},\cdots,x_{\tau(k)},v_{\tau(k+1)},\cdots,v_{\tau(k+l)})\Big).
\end{eqnarray*}
The linear map $\hat{f}$ is called a {\em lift} of $f$.
We define $\g^{k,l}:=\wedge^k\g_1\otimes\wedge^{l}\g_2$.
The vector space $\wedge^{n}(\g_1\oplus\g_2)$ is isomorphic to the direct sum of $\g^{k,l},~k+l=n$.

\begin{defi}
A linear map $f\in \Hom\big(\wedge^{k+l+1}(\g_1\oplus\g_2),\g_1\oplus\g_2\big)$ has a {\em bidegree} $k|l$, which is denoted by $||f||=k|l$,   if   $f$ satisfies the following two conditions:
\begin{itemize}
\item[\rm(i)] If $X\in\g^{k+1,l}$, then $f(X)\in\g_1$ and if $X\in\g^{k,l+1}$, then $f(X)\in\g_2;$
\item[\rm(ii)]
  In all the other cases $f(X)=0.$
\end{itemize}
We denote the set of homogeneous linear maps of bidegree $k|l$ by $C^{k|l}(\g_1\oplus\g_2,\g_1\oplus\g_2)$.
\end{defi}

It is clear that this gives a well-defined bigrading on the vector space $\Hom\big(\wedge^{k+l+1}(\g_1\oplus\g_2),\g_1\oplus\g_2\big)$.
We have $k+l\ge0,~k,l\ge-1$ because $k+l+1\ge1$ and $k+1,~l+1\ge0$.

Let $\g$ be a vector space. We consider the graded vector space
$
C^*(\g,\g)=\oplus_{n=0}^{+\infty}C^{n}(\mathfrak{g},\mathfrak{g})=\oplus_{n=0}^{+\infty}\Hom(\wedge^{n+1}\g,\g).
$
Then $C^*(\g,\g)$ equipped with the {\em Nijenhuis-Richardson bracket}
\begin{eqnarray}\label{NR-bracket}
[P,Q]_{\NR}=P\bar{\circ}Q-(-1)^{pq}Q\bar{\circ} P,\,\,\,\,\forall P\in C^{p}(\mathfrak{g},\mathfrak{g}),
Q\in C^{q}(\mathfrak{g},\mathfrak{g}),
\end{eqnarray}
is a graded Lie algebra, where  $P\bar{\circ}Q\in C^{p+q}(\mathfrak{g},\mathfrak{g})$ is defined by
\begin{eqnarray}\label{NR-bracket-com}
(P\bar{\circ}Q)(x_{1},\cdots,x_{p+q+1})=\sum_{\sigma\in \mathbb S_{(q+1,p)}}(-1)^{\sigma}
P(Q(x_{\sigma(1)},\cdots,x_{\sigma(q+1)}),
x_{\sigma(q+2)},\cdots,x_{\sigma(p+q+1)}).
\end{eqnarray}

\begin{rmk}\label{NR-coder}
In fact, the Nijenhuis-Richardson bracket is the commutator of  coderivations on the cofree conilpotent cocommutative coalgebra $\bar{\Sym}^c(s^{-1}\g)$. See \cite{NR,Stasheff} for more details.
\end{rmk}

The following lemmas are very important in our later study.

\begin{lem}\label{important-lemma-2}
The Nijenhuis-Richardson bracket on $C^*(\g_1\oplus\g_2,\g_1\oplus\g_2)$ is compatible with the  bigrading. More precisely, if $||f||=k_f|l_f$, $||g||=k_g|l_g$, then $[f,g]_{\NR}$ has bidegree $(k_f+k_g)|(l_f+l_g).$
\end{lem}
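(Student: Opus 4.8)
The plan is to verify the bidegree claim directly from the defining formula \eqref{NR-bracket-com} for the circle product $P\bar\circ Q$, since the Nijenhuis--Richardson bracket \eqref{NR-bracket} is just an antisymmetrization of two such circle products and bidegrees are additive under taking bracket once we control each term. So it suffices to show: if $\|f\|=k_f|l_f$ and $\|g\|=k_g|l_g$, then $f\bar\circ g$ has bidegree $(k_f+k_g)|(l_f+l_g)$, i.e. it satisfies conditions (i) and (ii) of the bidegree definition. First I would fix the arity bookkeeping: $f\in C^{k_f+l_f+1}$ (it eats $k_f+l_f+1$ arguments) and $g\in C^{k_g+l_g+1}$, so $f\bar\circ g\in C^{(k_f+l_f)+(k_g+l_g)+1}$, which eats $k_f+k_g+l_f+l_g+1$ arguments --- consistent with an expected bidegree summing to $(k_f+k_g)|(l_f+l_g)$.

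Next I would do the core case analysis. Take homogeneous arguments, i.e. an element $X\in\g^{a,b}$ with $a+b=k_f+k_g+l_f+l_g+1$, and plug it into the sum \eqref{NR-bracket-com} defining $(f\bar\circ g)(X)$. Each summand is indexed by a shuffle $\sigma$ that splits the $a$-many $\g_1$-slots and $b$-many $\g_2$-slots into an ``inner block'' of $l_g+k_g+1$ arguments fed to $g$ and an ``outer block'' of the remaining $k_f+l_f$ arguments; the value $g(\cdots)$ of the inner block then re-enters $f$ as a single argument in either $\g_1$ or $\g_2$. The key observation is the rigidity forced by $\|g\|=k_g|l_g$: a given shuffle $\sigma$ contributes a nonzero term only if the inner block lands in $\g^{k_g+1,l_g}$ or $\g^{k_g,l_g+1}$, and in the first case $g$ outputs an element of $\g_1$, in the second an element of $\g_2$. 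One then feeds that output (plus the outer-block arguments, which consist of some number $p$ of $\g_1$-arguments and $q$ of $\g_2$-arguments, with $p+q=k_f+l_f$) into $f$; by $\|f\|=k_f|l_f$, this is nonzero only when the total $\g_1$-count among $f$'s arguments is $k_f+1$ and the $\g_2$-count is $l_f$ (output in $\g_1$), or vice versa. I would then simply count: if the output of $g$ is in $\g_1$ (so the inner block was in $\g^{k_g+1,l_g}$, using $k_g+1$ of the $\g_1$-slots and $l_g$ of the $\g_2$-slots), then for $f$ to be nonzero we need the outer $\g_1$-count $p$ to satisfy $p+1=k_f+1$, i.e. $p=k_f$, and $q=l_f$; hence $a=(k_g+1)+k_f=k_f+k_g+1$ and $b=l_g+l_f$, giving output in $\g_1$. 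Symmetrically the other sub-case gives $a=k_f+k_g$, $b=l_f+l_g+1$ with output in $\g_2$. In every remaining $(a,b)$ the value is $0$. This is exactly conditions (i) and (ii) for bidegree $(k_f+k_g)|(l_f+l_g)$.

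Finally I would remark that the Koszul signs $(-1)^\sigma$ play no role in the vanishing/placement argument --- they only affect the coefficient, not which $\g^{a,b}$ the image lands in --- and that, since $\bar\circ$ respects the bidegree as shown, the commutator $[f,g]_{\NR}=f\bar\circ g-(-1)^{pq}g\bar\circ f$ is a linear combination of two maps of the same bidegree $(k_f+k_g)|(l_f+l_g)$, hence has that bidegree, completing the proof. I expect the only real bookkeeping obstacle to be keeping the arity and shuffle-index conventions straight (which of the $k_g{+}l_g{+}1$ arguments go to $g$, and the off-by-one in the shuffle type $\mathbb S_{(q+1,p)}$ in \eqref{NR-bracket-com}); the combinatorics of the case split itself is forced and essentially immediate once the counts are set up correctly.
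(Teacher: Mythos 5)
Your proposal is correct and is exactly the ``direct computation'' that the paper itself invokes without writing out: one checks that $\bar\circ$ respects the bigrading by feeding a homogeneous element of $\g^{a,b}$ into the shuffle sum, using the bidegree of $g$ to pin down which homogeneous type the inner block must have, then the bidegree of $f$ to pin down the outer counts, and finally noting that the signs are irrelevant and that the commutator of two maps of the same bidegree has that bidegree. One small slip: your case enumeration is incomplete. When the output of $g$ lies in $\g_1$, the map $f$ need not output in $\g_1$; it is also nonzero when the outer block contributes $k_f-1$ arguments from $\g_1$ and $l_f+1$ from $\g_2$, in which case $f$ outputs in $\g_2$, with $a=k_f+k_g$, $b=l_f+l_g+1$ (and symmetrically when $g$ outputs in $\g_2$, $f$ may output in $\g_1$). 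So there are four nonvanishing configurations, not two; fortunately all four land precisely where bidegree $(k_f+k_g)|(l_f+l_g)$ requires, so the conclusion is untouched, but the statement ``for $f$ to be nonzero we need $p=k_f$, $q=l_f$'' should be corrected to an either/or before the argument is complete.
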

\begin{proof}
It follows from direct computation.
\end{proof}

 \begin{rmk}
  In our later study, the subspaces $C^{k|0}(\g_1\oplus\g_2,\g_1\oplus\g_2)$ and $C^{-1|l}(\g_1\oplus\g_2,\g_1\oplus\g_2)$ will be frequently used. By the above lift map, we have the following isomorphisms:
  \begin{eqnarray}
   \label{rep-cochain-1} C^{k|0}(\g_1\oplus\g_2,\g_1\oplus\g_2)&\cong & \Hom(\wedge^{k+1}\g_1,\g_1)\oplus \Hom(\wedge^{k}\g_1\otimes \g_2,\g_2),\\
    \label{rep-cochain-2}C^{-1|l}(\g_1\oplus\g_2,\g_1\oplus\g_2)&\cong & \Hom(\wedge^{l}\g_2,\g_1).
  \end{eqnarray}
\end{rmk}

\begin{lem}\label{Zero-condition-2}
If $||f||=(-1)|k$ and $||g||=(-1)|l$, then $[f,g]_{\NR}=0.$ Consequently, $\oplus_{l=1}^{+\infty}  C^{-1|l}(\g_1\oplus\g_2,\g_1\oplus\g_2)$ is an abelian subalgebra of the graded Lie algebra $(C^*(\g_1\oplus \g_2,\g_1\oplus\g_2),[\cdot,\cdot]_\NR)$
\end{lem}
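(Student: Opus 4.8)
The plan is to prove the vanishing $[f,g]_{\NR}=0$ directly from the definitions \eqref{NR-bracket}--\eqref{NR-bracket-com}, and then read off the ``abelian subalgebra'' statement by bilinearity. Write $p=k-1$ and $q=l-1$, so that $f\in C^p(\g_1\oplus\g_2,\g_1\oplus\g_2)$ takes $k\ge1$ arguments and $g\in C^q(\g_1\oplus\g_2,\g_1\oplus\g_2)$ takes $l\ge1$ arguments (the bounds $k,l\ge1$ come from $k_f+l_f\ge0$ and $k_g+l_g\ge0$).

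First I would record what the bidegree hypotheses say concretely: via the isomorphism \eqref{rep-cochain-2}, $f$ is the lift of a map $\wedge^k\g_2\to\g_1$ and $g$ the lift of a map $\wedge^l\g_2\to\g_1$. In particular, $g$ vanishes on any family of homogeneous arguments unless all of them lie in $\g_2$, in which case its value lies in $\g_1$; and likewise $f$ vanishes as soon as one of its arguments lies in $\g_1$ (its only admissible input component is $\g^{0,k}$, since $\g^{-1,k+1}=0$).

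Next, I would compute $f\bar{\circ}g$ via \eqref{NR-bracket-com}. Since $\wedge^{p+q+1}(\g_1\oplus\g_2)=\bigoplus_{i+j=p+q+1}\g^{i,j}$, it is enough to evaluate on a homogeneous wedge $\xi_1\wedge\cdots\wedge\xi_{p+q+1}$ with each $\xi_m$ lying in $\g_1$ or in $\g_2$. In each summand of \eqref{NR-bracket-com}, the inner term $g(\xi_{\sigma(1)},\cdots,\xi_{\sigma(q+1)})$ is either $0$ or an element of $\g_1$, and in the latter case it becomes the first argument of $f$; since $f$ then receives an argument in $\g_1$, the outer evaluation vanishes. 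Hence every summand is $0$, so $f\bar{\circ}g=0$; the same argument with the roles of $f$ and $g$ exchanged gives $g\bar{\circ}f=0$, and therefore $[f,g]_{\NR}=f\bar{\circ}g-(-1)^{pq}g\bar{\circ}f=0$. For the consequence, note that $\bigoplus_{l\ge1}C^{-1|l}(\g_1\oplus\g_2,\g_1\oplus\g_2)$ is a graded subspace of the graded Lie algebra $(C^*(\g_1\oplus\g_2,\g_1\oplus\g_2),[\cdot,\cdot]_{\NR})$; any two of its elements decompose into bi-homogeneous pieces of bidegree $(-1)|\ast$, so by bilinearity and the vanishing just established their bracket is $0$. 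Thus the subspace is closed under $[\cdot,\cdot]_{\NR}$ with trivial induced bracket, i.e.\ it is an abelian subalgebra.

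I do not expect any serious obstacle: this is essentially a bookkeeping argument, the only delicate point being to keep track of which argument of $f$ (resp.\ $g$) is forced into $\g_1$. An alternative, even shorter route is to apply Lemma \ref{important-lemma-2}, which gives $||[f,g]_{\NR}||=(-2)|(k+l)$; but a linear map of bidegree $(-2)|m$ is identically zero, because the two ``admissible'' input components $\g^{-1,m}$ and $\g^{-2,m+1}$ appearing in conditions (i)--(ii) of the definition of bidegree are both trivial.
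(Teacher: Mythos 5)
Your argument is correct, and your primary route is genuinely different from the paper's. The paper proves this lemma in one line by invoking Lemma \ref{important-lemma-2}: the bracket $[f,g]_{\NR}$ has bidegree $(-2)|(k+l)$, and since any nonzero homogeneous map must have bidegree $k'|l'$ with $k',l'\ge -1$ (equivalently, as you note, the only would-be admissible input components $\g^{-1,\ast}$ and $\g^{-2,\ast}$ vanish), the bracket is zero -- this is exactly the ``alternative, even shorter route'' you sketch at the end. Your main argument instead unwinds the definition \eqref{NR-bracket-com} directly: using \eqref{rep-cochain-2} to see that $f$ and $g$ vanish unless all inputs lie in $\g_2$ and then take values in $\g_1$, every summand of $f\bar{\circ}g$ dies because the inner value of $g$ lands in $\g_1$ and is fed into $f$, and symmetrically for $g\bar{\circ}f$. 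This is more elementary and self-contained (it does not lean on the bigrading compatibility lemma, whose proof the paper leaves as ``direct computation''), at the cost of being longer; the paper's route buys brevity and reuses a lemma needed elsewhere anyway. Your bookkeeping ($p=k-1$, $q=l-1$, $k,l\ge1$, and the passage from bi-homogeneous pieces to the full direct sum by bilinearity for the abelian-subalgebra statement) is all in order.
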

\begin{proof}
  It follows from Lemma \ref{important-lemma-2}.
\end{proof}

\subsection{\MC characterization, deformations and cohomology of  \LR pairs}

Let $\g$ be a vector space. For $\mu\in C^{1}(\g,\g)=\Hom(\wedge^2\g,\g)$, we have
\begin{eqnarray*}
[\mu,\mu]_{\NR}(x,y,z)=2(\mu\bar{\circ}\mu)(x,y,z)
                      =2\Big(\mu(\mu(x,y),z)+\mu(\mu(y,z),x)+\mu(\mu(z,x),y)\Big).
\end{eqnarray*}
Thus, $\mu$ defines a Lie algebra structure on $\g$ if and only if $[\mu,\mu]_{\NR}=0$.

Define the set of $0$-cochains $\frkC^0_\Lie(\g;\g)$ to be $0$, and define the set of $n$-cochains $\frkC^n_\Lie(\g;\g)$ to be
$$
\frkC^n_\Lie(\g;\g):=\Hom(\wedge^n\g,\g)=C^{n-1}(\g,\g),\quad n\geq 1.
$$
The {\em Chevalley-Eilenberg coboundary operator} $\dM_\CE$   of the Lie algebra $\g$ with coefficients in the adjoint representation  is defined by
\begin{eqnarray}\label{CE-cohomology-diff}
\dM_\CE f=(-1)^{n-1}[\mu,f]_{\NR},\quad \forall f\in \frkC^n_\Lie(\g;\g).
\end{eqnarray}
The resulting cohomology is denoted by $\huaH^*_{\Lie}(\g;\g)$.

\begin{defi}
  A {\em \LR pair} consists of a Lie algebra  $(\g,[\cdot,\cdot]_\g)$  and a representation $\rho:\g\longrightarrow\gl(V)$   of $\g$ on a vector space $V$.
\end{defi}

Usually we will also use $\mu$ to indicate the Lie bracket $[\cdot,\cdot]_\g$, and denote a \LR pair by $(\g,\mu;\rho)$.

Note that $\mu+\rho\in C^{1|0}(\g\oplus V,\g\oplus V)$. Moreover, the fact that $\mu$ is a Lie bracket and $\rho$ is a representation is equivalent to that
$$
[\mu+\rho,\mu+\rho]_\NR=0.
$$
Next, the following result holds:
\begin{pro}{\rm (\cite{Arnal})}\label{deformation-Lie-and-rep}
Let $\g$ and $V$ be two vector spaces. Then $\big(\oplus_{k=0}^{+\infty} C^{k|0}(\g\oplus V,\g\oplus V),[\cdot,\cdot]_{\NR}\big)$ is a graded Lie algebra. Its \MC elements are precisely \LR pairs.{\noproof}
\end{pro}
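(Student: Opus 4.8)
The plan is to verify the two assertions separately: first that $\big(\oplus_{k\geq 0} C^{k|0}(\g\oplus V,\g\oplus V),[\cdot,\cdot]_{\NR}\big)$ is a graded Lie subalgebra of the full Nijenhuis-Richardson Lie algebra on $C^*(\g\oplus V,\g\oplus V)$, and second that its degree $1$ Maurer-Cartan elements are exactly \LR pairs. For the first point, the key observation is that an element of $C^{k|0}$ has degree $k$ as an element of the graded Lie algebra $C^*(\g\oplus V,\g\oplus V)$ (since $C^{k|0}\subset \Hom(\wedge^{k+1}(\g\oplus V),\g\oplus V)=C^k(\g\oplus V,\g\oplus V)$), so the direct sum over $k$ is precisely the $l=0$ part of the bigrading. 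By Lemma \ref{important-lemma-2}, $[\cdot,\cdot]_{\NR}$ respects the bigrading additively, so $[f,g]_{\NR}$ has bidegree $(k_f+k_g)|0$ whenever $||f||=k_f|0$ and $||g||=k_g|0$; hence the $l=0$ part is closed under the bracket. Since it is also clearly a graded subspace closed under the grading, it inherits the graded Jacobi identity and graded antisymmetry from the ambient graded Lie algebra, so it is a graded Lie algebra.

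For the second point, I would identify a degree $1$ element of $\oplus_k C^{k|0}$ with its image under the isomorphism \eqref{rep-cochain-1}, namely a pair $(\mu,\rho)$ with $\mu\in\Hom(\wedge^2\g,\g)$ and $\rho\in\Hom(\g\otimes V,V)$, assembled into $\mu+\rho\in C^{1|0}(\g\oplus V,\g\oplus V)$ via the lift construction. The Maurer-Cartan equation in this (differential-free) graded Lie algebra reads $\tfrac12[\mu+\rho,\mu+\rho]_{\NR}=0$, i.e. $[\mu+\rho,\mu+\rho]_{\NR}=0$. I would then expand this bracket using the explicit formulas \eqref{NR-bracket} and \eqref{NR-bracket-com} and decompose the result according to the bidegree: the component landing in $\g$ from inputs in $\wedge^3\g$ gives $[\mu,\mu]_{\NR}=0$, which by the computation recalled just before the Proposition is exactly the Jacobi identity for $\mu$; the component with inputs in $\wedge^2\g\otimes V$ landing in $V$ gives the condition that $\rho$ is a representation, i.e. $\rho([x,y]_\g)=\rho(x)\rho(y)-\rho(y)\rho(x)$; and all other components vanish automatically because of the bidegree constraints (a map of bidegree $1|0$ kills, e.g., $\wedge^1\g\otimes\wedge^2 V$, etc.). Conversely, if $\mu$ is a Lie bracket and $\rho$ a representation then these same two families of identities hold, so $[\mu+\rho,\mu+\rho]_{\NR}=0$. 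This establishes the bijection between \MC elements and \LR pair structures on $(\g,V)$.

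The main obstacle is the bookkeeping in the second step: one must carefully check that the lift operation $f\mapsto\hat f$ is compatible with the Nijenhuis-Richardson composition, so that $\widehat{[\mu+\rho,\mu+\rho]}$ decomposes cleanly into the $\g$-valued and $V$-valued pieces with the correct shuffle signs, and that the cross terms (e.g. $\mu$ composed with $\rho$ in the "wrong" slot) either cancel in pairs or are forced to vanish by the bidegree. This is a routine but sign-sensitive verification; once it is done, matching the surviving terms against the Jacobi identity and the representation axiom is immediate. Since this Proposition is quoted from \cite{Arnal}, I would keep the argument brief, citing the bigrading lemma for closure under the bracket and indicating the term-by-term expansion for the \MC characterization rather than writing out every shuffle.
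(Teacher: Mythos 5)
Your argument is correct and follows exactly the route the paper has in mind: the paper itself quotes this Proposition from \cite{Arnal} without proof, but its surrounding discussion (Lemma \ref{important-lemma-2} giving closure of the bidegree-$k|0$ part under $[\cdot,\cdot]_{\NR}$, and the identification of $[\mu+\rho,\mu+\rho]_{\NR}=0$ with the Jacobi identity plus the representation condition via the bidegree decomposition) is precisely what you spell out. The only nitpick is the parenthetical ``a map of bidegree $1|0$ kills $\wedge^1\g\otimes\wedge^2V$'': the relevant map $[\mu+\rho,\mu+\rho]_{\NR}$ has bidegree $2|0$, which is what forces all components other than $\wedge^3\g\to\g$ and $\wedge^2\g\otimes V\to V$ to vanish; this does not affect the argument.
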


Let $(\g,\mu;\rho)$ be a \LR pair. By Proposition~\ref{deformation-Lie-and-rep}, $\pi=\mu+\rho$ is an \MC element of the graded Lie algebra $\big(\oplus_{k=0}^{+\infty} C^{k|0}(\g\oplus V,\g\oplus V),[\cdot,\cdot]_{\NR}\big)$.  It follows from the graded Jacobi identity that
$d_\pi:=[\pi,\cdot]_{\NR}$
 is a graded derivation of the graded Lie
algebra $\big(\oplus_{k=0}^{+\infty} C^{k|0}(\g\oplus V,\g\oplus V),[\cdot,\cdot]_{\NR}\big)$ satisfying $d^2_\pi=0$.
  Therefore we have

\begin{thm}{\rm (\cite{Arnal})}\label{thm:deformation-lie-rep}
Let $(\g,\mu;\rho)$ be a \LR pair.  Then $\big(\oplus_{k=0}^{+\infty} C^{k|0}(\g\oplus V,\g\oplus V),[\cdot,\cdot]_{\NR},d_\pi\big)$ is a dg Lie algebra.

Furthermore,   $(\g,\mu+\mu';\rho+\rho')$ is also a \LR pair for $\mu'\in\Hom(\wedge^2\g,\g)$ and $\rho'\in\Hom(\g,\gl(V))$ if and only if $\mu'+\rho'$ is an \MC
element of the dg Lie algebra
$\big(\oplus_{k=0}^{+\infty} C^{k|0}(\g\oplus V,\g\oplus V),[\cdot,\cdot]_{\NR},d_\pi\big)$.{\noproof}
\end{thm}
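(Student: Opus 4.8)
The plan is to show the two claimed statements: (1) that $\bigl(\oplus_{k\geq 0} C^{k|0}(\g\oplus V,\g\oplus V),[\cdot,\cdot]_\NR,d_\pi\bigr)$ is a dg Lie algebra, and (2) the Maurer-Cartan characterization of \LR pair deformations. For (1), the key inputs are already in place: by Proposition~\ref{deformation-Lie-and-rep} the space $\oplus_{k\geq 0} C^{k|0}(\g\oplus V,\g\oplus V)$ is closed under $[\cdot,\cdot]_\NR$ and hence a graded Lie algebra (this uses Lemma~\ref{important-lemma-2}, bidegree additivity, since $k|0 + k'|0 = (k+k')|0$). Since $\pi = \mu+\rho$ is an \MC element of this graded Lie algebra, I would invoke the standard fact that for any \MC element $\pi$ in a graded Lie algebra $(\frkg,[\cdot,\cdot])$, the operator $d_\pi := [\pi,\cdot]$ is a degree $+1$ derivation with $d_\pi^2 = 0$: the derivation property is the graded Jacobi identity, and $d_\pi^2 x = [\pi,[\pi,x]] = \frac12[[\pi,\pi],x] = 0$ by Jacobi again together with the \MC equation $[\pi,\pi]=0$ (here $d_\pi\pi = 0$ too, but $d_\pi$ has no linear term since the graded Lie algebra is not differential to begin with, so the \MC equation reads simply $[\pi,\pi]=0$). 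This gives the dg Lie algebra structure. One should note the grading convention: an element of $C^{k|0}$ sits in degree $k$, so $\pi\in C^{1|0}$ has degree $1$, consistent with being an \MC element.

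For (2), the strategy is a direct unwinding of the \MC equation for $d_\pi$. Given $\mu'\in\Hom(\wedge^2\g,\g)$ and $\rho'\in\Hom(\g,\gl(V))$, their combined lift $\mu'+\rho'$ lies in $C^{1|0}(\g\oplus V,\g\oplus V)$, again of degree $1$. The \MC equation in the dg Lie algebra $(\frkg,[\cdot,\cdot]_\NR,d_\pi)$ for $\theta = \mu'+\rho'$ is
\[
d_\pi\theta + \tfrac12[\theta,\theta]_\NR = 0,
\]
that is, $[\pi,\theta]_\NR + \tfrac12[\theta,\theta]_\NR = 0$. Adding $\tfrac12[\pi,\pi]_\NR = 0$ (the \MC equation for $\pi$ itself) to the left-hand side, this is equivalent to
\[
\tfrac12\,[\pi+\theta,\pi+\theta]_\NR = 0,
\]
using bilinearity and graded symmetry of the bracket. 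By Proposition~\ref{deformation-Lie-and-rep}, the condition $[\pi+\theta,\pi+\theta]_\NR = 0$ with $\pi+\theta = (\mu+\mu')+(\rho+\rho') \in C^{1|0}$ says precisely that $(\mu+\mu',\rho+\rho')$ defines a \LR pair, i.e. that $\mu+\mu'$ is a Lie bracket on $\g$ and $\rho+\rho'$ is a representation on $V$. This establishes the equivalence.

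I do not expect any serious obstacle here; the statement is essentially formal given the machinery already set up. The only points requiring a little care are bookkeeping ones: checking that all the relevant elements land in the single graded piece $C^{1|0}$ so that the bidegree/grading conventions line up (in particular that $\mu'+\rho'$ has the same bidegree $1|0$ as $\pi$, which is why the problem is symmetric under $\pi \mapsto \pi+\theta$), and recalling the standard lemma that twisting a graded Lie algebra by an \MC element produces a differential squaring to zero. I would state that lemma explicitly (or cite it) rather than reprove it, since it is well known and was already used implicitly in the discussion preceding Theorem~\ref{thm:deformation-lie-rep}. The passage from $d_\pi\theta + \tfrac12[\theta,\theta]_\NR=0$ to $\tfrac12[\pi+\theta,\pi+\theta]_\NR=0$ is the one genuinely content-bearing manipulation, and it is a one-line expansion, so there is really nothing hard to grind through.
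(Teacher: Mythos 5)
Your proposal is correct and follows essentially the same route the paper indicates: Proposition \ref{deformation-Lie-and-rep} gives that $\pi=\mu+\rho$ is an \MC element of the graded Lie algebra $\oplus_{k\geq 0}C^{k|0}(\g\oplus V,\g\oplus V)$, the graded Jacobi identity together with $[\pi,\pi]_\NR=0$ shows $d_\pi$ is a square-zero derivation, and the deformation statement is the standard expansion $d_\pi\theta+\tfrac12[\theta,\theta]_\NR=\tfrac12[\pi+\theta,\pi+\theta]_\NR$ combined again with Proposition \ref{deformation-Lie-and-rep}. No gaps; the bookkeeping on degrees (everything in bidegree $1|0$, degree $1$) is handled correctly.
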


Let $(\g,\mu;\rho)$ be a \LR pair. Define the set of $0$-cochains $\frkC^0(\g,\rho)$ to be $0$. For $n\geq 1$, we define the set of $n$-cochains $\frkC^n(\g,\rho)$ to be
$$
\frkC^n(\g,\rho):=C^{(n-1)|0}(\g\oplus V,\g\oplus V)=\Hom(\wedge^n\g,\g)\oplus\Hom(\wedge^{n-1}\g\otimes V,V).
$$
Define the coboundary operator $\partial:\frkC^n(\g,\rho)\lon \frkC^{n+1}(\g,\rho)$ by
\begin{equation}\label{defi:coboundary-rep}
  \partial f:=(-1)^{n-1}[ {\mu}+ {\rho}, {f}]_{\NR}.
\end{equation}

By Proposition \ref{deformation-Lie-and-rep}, we deduce that $\partial\circ \partial =0$. Thus we obtain the complex $(\oplus_{n=0}^{+\infty}\frkC^n(\g,\rho),\partial)$.
 \begin{defi}{\rm (\cite{ Arnal})}
   The cohomology of the cochain complex $(\oplus_{n=0}^{+\infty}\frkC^n(\g,\rho),\partial)$ is called  the {\em cohomology of the \LR pair  $(\g,\mu;\rho)$}. The resulting $n$-th cohomology group is denoted by $\huaH^n(\g,\rho)$.
 \end{defi}
Now we give the precise formula for  $\partial$. For any $n$-cochain $f\in \frkC^n(\g,\rho)$, by \eqref{rep-cochain-1}, we will write $f=(f_\g,f_V)$, where $f_\g\in \Hom(\wedge^n\g,\g)$ and $f_V\in\Hom(\wedge^{n-1}\g\otimes V,V)$. Then we have
 \begin{eqnarray}\label{cohomology-algebra-rep}
\partial f=\Big((\partial f)_\g,(\partial f)_V\Big),
\end{eqnarray}
where $(\partial f)_\g=\dM_\CE f_\g$ and $(\partial f)_V$  is given by
\begin{eqnarray}
\nonumber&&(\partial f)_V(x_1,\cdots,x_{n},v)\\
\label{cohomology-algebra-rep-V}&=&\sum_{1\le i<j\le n}(-1)^{i+j}f_V([x_i,x_j]_\g,x_1,\cdots,\hat{x}_i,\cdots,\hat{x}_j,\cdots,x_n,v)+(-1)^{n-1}\rho(f_\g(x_1,\cdots,x_{n}))v\\
\nonumber&&+\sum_{i=1}^{n}(-1)^{i+1}\Big(\rho(x_i)f_V(x_1,\cdots,\hat{x}_i,\cdots,x_n,v)-f_V\big(x_1,\cdots,\hat{x}_i,\cdots,x_n,\rho(x_i)v\big)\Big).
\end{eqnarray}

\subsection{\MC characterization, deformations and cohomologies of relative Rota-Baxter operators}

We now recall the notion of a relative Rota-Baxter operator. Let $(\g,[\cdot,\cdot]_\g)$ be a Lie algebra and $\rho:\g\longrightarrow\gl(V)$   a representation of $\g$ on a vector space $V$.

\begin{defi} \label{defi:O}
\begin{enumerate}
\item[\rm(i)] A linear operator $T:\g\longrightarrow \g$ is called a {\em Rota-Baxter operator } if
\begin{equation} [T(x),T(y)]_\g=T\big([T(x),y]_\g+ [x,T(y)]_\g \big), \quad \forall x, y \in \g.
\label{eq:rbo}
\end{equation}
Moreover, a Lie algebra $(\g,[\cdot,\cdot]_\g)$ with a Rota-Baxter operator $T$ is
called a {\em Rota-Baxter Lie algebra}. We denote it by $(\g,[\cdot,\cdot]_\g,T)$.
\item[\rm(ii)] A {\em relative Rota-Baxter Lie algebra} is a triple $((\g,[\cdot,\cdot]_\g),\rho,T)$, where $(\g,[\cdot,\cdot]_\g)$ is a Lie algebra, $\rho:\g\longrightarrow\gl(V)$ is a representation of $\g$ on a vector space $V$ and $T:V\longrightarrow\g$ is a  {\em relative Rota-Baxter operator}, i.e.
 \begin{equation}
   [Tu,Tv]_\g=T\big(\rho(Tu)(v)-\rho(Tv)(u)\big),\quad\forall u,v\in V.
 \mlabel{eq:defiO}
 \end{equation}

\end{enumerate}
\end{defi}

Note that a Rota-Baxter operator on a Lie algebra is a relative Rota-Baxter operator with respect to the adjoint representation.

  \begin{defi}\label{defi:homoRRB}
  \begin{itemize}
  \item[\rm(i)] Let $(\g,[\cdot,\cdot]_\g,T)$ and $(\g',\{\cdot,\cdot\}_{\g'},T')$ be  Rota-Baxter Lie algebras. A linear map $\phi:\g'\lon\g$ is called a {\em homomorphism} of Rota-Baxter Lie algebras if $\phi$ is a Lie algebra homomorphism and
  $
  \phi\circ T'=T\circ \phi.
  $
 \item[\rm(ii)] Let $((\g,[\cdot,\cdot]_\g),\rho,T)$ and $((\g',\{\cdot,\cdot\}_{\g'}),\rho',T')$ be two relative Rota-Baxter Lie algebras. A {\em homomorphism} from  $((\g',\{\cdot,\cdot\}_{\g'}),\rho',T')$ to $((\g,[\cdot,\cdot]_\g),\rho,T)$ consists of
     a Lie algebra homomorphism  $\phi:\g'\longrightarrow\g$ and a linear map $\varphi:V'\longrightarrow V$ such that
         \begin{eqnarray}
          T\circ \varphi&=&\phi\circ T',\mlabel{defi:isocon1}\\
                \varphi\rho'(x)(u)&=&\rho(\phi(x))(\varphi(u)),\quad\forall x\in\g', u\in V'.\mlabel{defi:isocon2}
      \end{eqnarray}

      In particular, if $\phi$ and $\varphi$ are  invertible,  then $(\phi,\varphi)$ is called an  {\em isomorphism}.
      \end{itemize}
\end{defi}

 Define a skew-symmetric bracket operation on the graded vector space
$\oplus_{k=1}^{+\infty}\Hom(\wedge^{k}V,\g)$
 by
$$
  \Courant{\theta,\phi}:=(-1)^{n-1}[[\mu+\rho,\theta]_{\NR},\phi]_{\NR},\quad\forall \theta\in\Hom(\wedge^nV,\g), \phi\in\Hom(\wedge^mV,\g).
 $$

\begin{pro}{\rm (\cite{TBGS-1})}\label{pro:gla}
  With the above notation, $(\oplus_{k=1}^{+\infty}\Hom(\wedge^{k}V,\g),\Courant{\cdot,\cdot})$ is a graded Lie algebra. Its \MC elements are precisely  relative Rota-Baxter operators on $(\g,[\cdot,\cdot]_\g)$ with respect to the representation $(V;\rho)$.{\noproof}
\end{pro}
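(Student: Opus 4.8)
The plan is to deduce Proposition~\ref{pro:gla} from the graded Lie algebra structure on $C^*(\g\oplus V,\g\oplus V)$ equipped with the Nijenhuis-Richardson bracket, together with the bidegree machinery of Lemmas~\ref{important-lemma-2} and~\ref{Zero-condition-2}. First I would identify the graded vector space $\oplus_{k=1}^{+\infty}\Hom(\wedge^kV,\g)$ with $\oplus_{l=1}^{+\infty}C^{-1|l}(\g\oplus V,\g\oplus V)$ via the lift map and the isomorphism~\eqref{rep-cochain-2}; under this identification an element $\theta\in\Hom(\wedge^nV,\g)$ sits in degree $n-1$, which is the grading in which $\Courant{\cdot,\cdot}$ is a degree-zero operation. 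The key structural input is that $\pi=\mu+\rho$ is an \MC element of $(\oplus_k C^{k|0},[\cdot,\cdot]_\NR)$ (Proposition~\ref{deformation-Lie-and-rep}), so $d_\pi=[\pi,\cdot]_\NR$ is a square-zero derivation of the full NR graded Lie algebra. One then recognizes $\Courant{\theta,\phi}$, up to the sign $(-1)^{n-1}$, as the \emph{derived bracket} $[[\pi,\theta]_\NR,\phi]_\NR$ built from $d_\pi$ and the abelian subalgebra $\frkh:=\oplus_{l\ge1}C^{-1|l}$, which is abelian by Lemma~\ref{Zero-condition-2}.

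The next step is to verify that this derived bracket actually lands back in $\frkh$ and satisfies the graded Jacobi identity. For the closure: if $\|\theta\|=-1|n$ then $\|[\pi,\theta]_\NR\| = 0|n$ by Lemma~\ref{important-lemma-2} (since $\|\pi\|=1|0$), and then $\|[[\pi,\theta]_\NR,\phi]_\NR\| = (0|n)+(-1|m) = -1|(n+m)$, so indeed $\Courant{\theta,\phi}\in\Hom(\wedge^{n+m-1}V,\g)$, and one checks the sign convention is consistent with the degree shift. For graded skew-symmetry and the graded Jacobi identity, I would invoke the standard fact (Voronov's higher derived brackets, \cite{Vo}, or a direct computation using $[\pi,\pi]_\NR=0$, $d_\pi^2=0$, and the vanishing of $[\frkh,\frkh]_\NR$): given a graded Lie algebra $L$, a square-zero derivation $\mathrm{D}$, and an abelian subalgebra $\frkh\subseteq L$ such that $\mathrm{D}(\frkh)$ together with $\frkh$ generates things compatibly, the operation $(a,b)\mapsto [\mathrm{D}a,b]$ (with appropriate signs) makes $\frkh$ into a graded Lie algebra. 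The $(-1)^{n-1}$ prefactor is exactly the bookkeeping that converts the NR grading/signs into the ones making $\Courant{\cdot,\cdot}$ graded skew.

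Finally, for the \MC characterization: an element $\theta\in\Hom(\wedge^2V,\g)$ has degree $1$ in this complex, and the \MC equation $\frac12\Courant{\theta,\theta}=0$ (there is no differential term here since the bracket is already the twisted one) unwinds, using the explicit NR formulas~\eqref{NR-bracket} and~\eqref{NR-bracket-com} applied to $\mu+\rho$ and $\theta\in C^{-1|2}$, to precisely the identity $[\theta u,\theta v]_\g = \theta(\rho(\theta u)v - \rho(\theta v)u)$ of~\eqref{eq:defiO}. I would carry out this last unwinding explicitly on a pair $(0,u),(0,v),(0,w)\in\g\oplus V$, noting that only the $\g$-component survives and the shuffle sum over $\mathbb S_{(2,1)}$ reproduces the three cyclic terms that combine into the relative Rota-Baxter condition.

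\textbf{Main obstacle.} The conceptual content is routine once the derived-bracket framework is in place; the genuinely error-prone part is the \emph{sign and degree bookkeeping} — confirming that the desuspension implicit in identifying $\Hom(\wedge^nV,\g)$ with degree $n-1$, the Koszul signs in the lift map, the $(-1)^{pq}$ in~\eqref{NR-bracket}, and the chosen prefactor $(-1)^{n-1}$ all conspire to yield an honest graded Lie bracket with the \MC elements coming out as stated, rather than off by a sign. I would handle this by citing \cite{TBGS-1} for the precise normalization and checking one or two low-degree cases by hand.
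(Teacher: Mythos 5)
Your overall route is the right one, and it is essentially the argument of \cite{TBGS-1} (the paper itself only quotes the statement; the same mechanism reappears in Section \ref{sec:deform} via Proposition \ref{pro:VdataL}): identify $\oplus_{k\ge1}\Hom(\wedge^kV,\g)$ with the abelian subalgebra $\oplus_{l\ge1}C^{-1|l}(\g\oplus V,\g\oplus V)$ of Lemma \ref{Zero-condition-2}, use $[\pi,\pi]_{\NR}=0$ for $\pi=\mu+\rho$, and read $\Courant{\cdot,\cdot}$ as the derived bracket $[[\pi,\cdot]_{\NR},\cdot]_{\NR}$, whose closure on $\h$ and Jacobi identity follow from the bidegree count (Lemma \ref{important-lemma-2}) and Voronov's construction (Theorem \ref{thm:db}, applied to the V-data with $\Delta=\pi$, which lies in $\ker P$). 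However, your degree bookkeeping is wrong in a way that breaks the second half of the statement. By \eqref{rep-cochain-2}, $C^{-1|(n+m)}(\g\oplus V,\g\oplus V)\cong\Hom(\wedge^{n+m}V,\g)$, not $\Hom(\wedge^{n+m-1}V,\g)$: the derived bracket \emph{adds} the wedge-arities. Consequently the grading in which $\Courant{\cdot,\cdot}$ is a degree-zero graded Lie bracket is $\deg\Hom(\wedge^nV,\g)=n$, not $n-1$; the prefactor $(-1)^{n-1}$ is exactly the d\'ecalage sign making the bracket graded antisymmetric for that grading (one checks $\Courant{\theta,\phi}=-(-1)^{nm}\Courant{\phi,\theta}$ using the graded Jacobi identity and $[\theta,\phi]_{\NR}=0$).

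This error propagates into the Maurer--Cartan characterization, which is precisely the content of the proposition. With your grading you place the degree-one (hence \MC) elements in $\Hom(\wedge^2V,\g)$, whereas a relative Rota-Baxter operator is a linear map $T\in\Hom(V,\g)$; note that your own concluding identity treats $\theta$ as a unary map, inconsistently with $\theta\in\Hom(\wedge^2V,\g)$, and evaluating on three arguments with an $\mathbb S_{(2,1)}$-shuffle would produce a different identity for a binary operation, not \eqref{eq:defiO}. With the correct grading, $T\in\Hom(V,\g)$ has degree $1$, $\Courant{T,T}=[[\pi,T]_{\NR},T]_{\NR}$ has bidegree $-1|2$, i.e.\ lies in $\Hom(\wedge^2V,\g)$, and the \MC equation is verified by evaluating on two elements $(0,u),(0,v)$ of $\g\oplus V$, which yields exactly $[Tu,Tv]_\g=T\big(\rho(Tu)v-\rho(Tv)u\big)$. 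So the derived-bracket strategy is sound, but as written your verification would fail; once the conversion $C^{-1|l}\cong\Hom(\wedge^lV,\g)$ and the grading are corrected, the argument goes through.
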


Let $T:V\longrightarrow\g$ be a  relative Rota-Baxter operator. By Proposition~\ref{pro:gla}, $T$ is an \MC element of the graded Lie algebra $(\oplus_{k=1}^{+\infty}\Hom(\wedge^{k}V,\g),\Courant{\cdot,\cdot})$.  It follows from graded Jacobi identity that
$d_T:=\Courant{T,\cdot}$
 is a graded derivation on the graded Lie
algebra $(\oplus_{k=1}^{+\infty}\Hom(\wedge^{k}V,\g),\Courant{\cdot,\cdot})$ satisfying $d^2_T=0$.
  Therefore we have

\begin{thm}{\rm (\cite{TBGS-1})}\label{thm:deformation-relative-rb-o}
With the above notation, $(\oplus_{k=1}^{+\infty}\Hom(\wedge^{k}V,\g),\Courant{\cdot,\cdot},d_T)$ is a dg Lie algebra.

Furthermore,
 $T+T'$ is still
a relative Rota-Baxter operator on the Lie algebra $(\g,[\cdot,\cdot]_\g)$ with respect to the
representation $(V;\rho)$ for  $T':V\longrightarrow \g$ if and only if $T'$ is an \MC
element of the dg Lie algebra
$(\oplus_{k=1}^{+\infty}\Hom(\wedge^{k}V,\g),\Courant{\cdot,\cdot},d_T)$.{\noproof}
\end{thm}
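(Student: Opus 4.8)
The plan is to obtain both assertions as formal consequences of Proposition~\ref{pro:gla} via the familiar procedure of twisting a graded Lie algebra at a Maurer-Cartan element. Set $\huaL:=\oplus_{k=1}^{+\infty}\Hom(\wedge^{k}V,\g)$ with the grading in which $\Hom(\wedge^{k}V,\g)$ sits in degree $k$; this is the grading that makes $\Courant{\cdot,\cdot}$ a degree $0$ bracket, and relative Rota-Baxter operators are then exactly the degree $1$ elements. By Proposition~\ref{pro:gla}, $(\huaL,\Courant{\cdot,\cdot})$ is a graded Lie algebra whose \MC elements are precisely the $S\in\Hom(V,\g)$ with $\Courant{S,S}=0$, these being the relative Rota-Baxter operators.

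For the first assertion I would repeat the argument sketched just before the statement: since $T$ is an \MC element of $(\huaL,\Courant{\cdot,\cdot})$, the graded Jacobi identity shows that $d_T=\Courant{T,\cdot}$ is a degree $1$ graded derivation of $\Courant{\cdot,\cdot}$, and applying graded Jacobi once more gives $d_T^{2}=\frac12\Courant{\Courant{T,T},\cdot}$, which vanishes because $\Courant{T,T}=0$ and $\K$ has characteristic $0$. Hence $(\huaL,\Courant{\cdot,\cdot},d_T)$ is a dg Lie algebra.

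For the second assertion, fix $T'\in\Hom(V,\g)$. By Proposition~\ref{pro:gla}, $T+T'$ is a relative Rota-Baxter operator if and only if $\Courant{T+T',T+T'}=0$. Expanding by bilinearity and using that $\Courant{\cdot,\cdot}$ is graded symmetric on degree $1$ elements (so $\Courant{T,T'}=\Courant{T',T}$) yields
\begin{eqnarray*}
\Courant{T+T',T+T'}&=&\Courant{T,T}+2\Courant{T,T'}+\Courant{T',T'}\\
&=&2d_T T'+\Courant{T',T'},
\end{eqnarray*}
where the second equality uses $\Courant{T,T}=0$ (as $T$ is itself a relative Rota-Baxter operator) and $d_T T'=\Courant{T,T'}$. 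Thus $T+T'$ is a relative Rota-Baxter operator precisely when $d_T T'+\frac12\Courant{T',T'}=0$, which is the \MC equation for $T'$ in $(\huaL,\Courant{\cdot,\cdot},d_T)$, as desired.

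Since the structural input — that $(\huaL,\Courant{\cdot,\cdot})$ is a graded Lie algebra with relative Rota-Baxter operators as its \MC elements — is already provided by Proposition~\ref{pro:gla}, the remaining work is entirely formal, and the only point requiring care is sign and grading bookkeeping: fixing the grading of $\huaL$ so that $\Courant{\cdot,\cdot}$ has degree $0$ and $T$ has degree $1$, and verifying the graded-symmetry identity $\Courant{T,T'}=\Courant{T',T}$ together with $d_T^{2}=0$ with the correct signs. This is where I expect the only (mild) obstacle to lie.
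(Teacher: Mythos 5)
Your proposal is correct and follows essentially the same route as the paper: Proposition \ref{pro:gla} identifies relative Rota-Baxter operators with \MC elements of the graded Lie algebra $(\oplus_{k=1}^{+\infty}\Hom(\wedge^{k}V,\g),\Courant{\cdot,\cdot})$, the graded Jacobi identity then makes $d_T=\Courant{T,\cdot}$ a square-zero derivation, and expanding $\Courant{T+T',T+T'}=0$ gives the \MC equation for $T'$ in the twisted dg Lie algebra. Your grading and sign bookkeeping (bracket of degree $0$, $T$ of degree $1$, $\Courant{T,T'}=\Courant{T',T}$, $d_T^2=\tfrac12\Courant{\Courant{T,T},\cdot}=0$) checks out.
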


Now we define the cohomology governing deformations of a relative Rota-Baxter operator $T:V\lon\g.$ The spaces of $0$-cochains $\frkC^0(T)$ and of $1$-cochains $\frkC^1(T)$ are set to be $0$. For $n\geq2$, define the vector space of $n$-cochains $\frkC^n(T)$ as $\frkC^n(T)=\Hom(\wedge^{n-1}V,\g)$.

Define the coboundary operator $\delta:\frkC^n(T)\lon \frkC^{n+1}(T)$ by
\begin{equation}\label{defi:coboundary-O}
  \delta \theta=(-1)^{n-2}\Courant{T,\theta}=(-1)^{n-2}[[\mu+\rho,T]_{\NR},\theta]_{\NR},\quad \forall \theta\in \Hom(\wedge^{n-1}V,\g).
\end{equation}

By Proposition \ref{pro:gla}, $(\oplus_{n=0}^{+\infty}\frkC^n(T),\delta)$ is a cochain complex.

\begin{defi}{\rm (\cite{TBGS-1})}
  The cohomology of the cochain complex  $(\oplus_{n=0}^{+\infty}\frkC^n(T),\delta)$ is called the {\em cohomology of the relative Rota-Baxter operator} $T:V\lon\g.$ The corresponding $n$-th cohomology group is denoted by $\huaH^n(T)$.
\end{defi}

See \cite{TBGS-1} for explicit formulas of the coboundary operator $\delta.$

\section{Maurer-Cartan characterization and deformations of relative Rota-Baxter Lie algebras}\label{sec:deform}

In this section, we apply Voronov's higher derived brackets to construct the $L_\infty$-algebra that characterizes relative Rota-Baxter Lie algebras as \MC elements. Consequently, we obtain the $L_\infty$-algebra that controls deformations of a relative Rota-Baxter Lie algebra.

\subsection{$L_\infty$-algebras and higher derived brackets}

The notion of an $L_\infty$-algebra was introduced by Stasheff in \cite{stasheff:shla}. See  \cite{LS,LM} for more details.
\begin{defi}
An {\em  $L_\infty$-algebra} is a $\mathbb Z$-graded vector space $\g=\oplus_{k\in\mathbb Z}\g^k$ equipped with a collection $(k\ge 1)$ of linear maps $l_k:\otimes^k\g\lon\g$ of degree $1$ with the property that, for any homogeneous elements $x_1,\cdots,x_n\in \g$, we have
\begin{itemize}\item[\rm(i)]
{\em (graded symmetry)} for every $\sigma\in\mathbb S_{n}$,
\begin{eqnarray*}
l_n(x_{\sigma(1)},\cdots,x_{\sigma(n-1)},x_{\sigma(n)})=\varepsilon(\sigma)l_n(x_1,\cdots,x_{n-1},x_n),
\end{eqnarray*}
\item[\rm(ii)] {\em (generalized Jacobi identity)} for all $n\ge 1$,
\begin{eqnarray*}\label{sh-Lie}
\sum_{i=1}^{n}\sum_{\sigma\in \mathbb S_{(i,n-i)} }\varepsilon(\sigma)l_{n-i+1}(l_i(x_{\sigma(1)},\cdots,x_{\sigma(i)}),x_{\sigma(i+1)},\cdots,x_{\sigma(n)})=0.
\end{eqnarray*}
\end{itemize}
\end{defi}

There is a canonical way to view a differential graded Lie algebra as an $L_\infty$-algebra.

\begin{lem}\label{Quillen-construction}
Let $(\g,[\cdot,\cdot]_\g,d)$ be a dg Lie algebra. Then  $(s^{-1}\g,\{l_i\}_{i=1}^{+\infty})$ is an $L_\infty$-algebra, where $
l_1(s^{-1}x)=s^{-1}d(x),~
l_2(s^{-1}x,s^{-1}y)=(-1)^{x}s^{-1}[x,y]_\g,~
l_k=0,$ for all $k\ge 3,
$
and homogeneous elements $x,y\in \g$.{\noproof}
\end{lem}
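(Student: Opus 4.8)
The statement to prove is Lemma \ref{Quillen-construction}: a dg Lie algebra $(\g,[\cdot,\cdot]_\g,d)$ gives an $L_\infty$-algebra on $s^{-1}\g$ with $l_1(s^{-1}x)=s^{-1}d(x)$, $l_2(s^{-1}x,s^{-1}y)=(-1)^x s^{-1}[x,y]_\g$, and $l_k=0$ for $k\geq 3$.

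\textbf{The plan.} The plan is to verify the two defining axioms of an $L_\infty$-algebra directly. First I would check that $l_1$ and $l_2$ are indeed degree $1$ maps on $s^{-1}\g$: since $(s^{-1}\g)^i = \g^{i+1}$, the desuspension shifts degrees down by one, so a degree $0$ map like $d$ on $\g$ becomes a degree... no, wait — I need $l_1$ to have degree $1$ on $s^{-1}\g$. Precisely, $s^{-1}\colon \g \to s^{-1}\g$ has degree $-1$, so $l_1 = s^{-1}\circ d \circ s$ has degree $-1 + 0 + 1 = \ldots$ — here the cleanest route is to note that the composite $s^{-1}\g \xrightarrow{s} \g \xrightarrow{d} \g \xrightarrow{s^{-1}} s^{-1}\g$ raises degree by $0$ on the middle, and $s$ raises degree by $1$ while $s^{-1}$ lowers it by $1$; combined with the sign conventions this makes $l_1$ homogeneous of degree $1$ in the shifted grading. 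I would record this degree bookkeeping carefully at the outset, together with the Koszul sign rule, because every subsequent sign depends on it.

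\textbf{Key steps.} Next I would verify graded symmetry (axiom (i)): for $l_1$ it is vacuous, for $l_2$ it amounts to checking $l_2(s^{-1}x, s^{-1}y) = \varepsilon(\sigma; s^{-1}x, s^{-1}y)\, l_2(s^{-1}y, s^{-1}x)$ for the transposition, which reduces to the skew-symmetry of $[\cdot,\cdot]_\g$ together with the extra sign produced by the degree shift — this is exactly the standard "décalage" computation, and the factor $(-1)^x$ in the definition of $l_2$ is chosen precisely so it works out. Then I would verify the generalized Jacobi identity (axiom (ii)) for each $n$. For $n=1$ it is $l_1(l_1(x))=0$, i.e. $d^2=0$. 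For $n=2$ it is $l_1 l_2 = l_2(l_1 \otimes \mathrm{id}) + l_2(\mathrm{id}\otimes l_1)$ up to shuffle signs, which translates to $d$ being a derivation of $[\cdot,\cdot]_\g$. For $n=3$ it is the graded Jacobi identity for $[\cdot,\cdot]_\g$ (all $l_i$ with $i\geq 3$ vanish, and $l_2\circ l_2$-type terms assemble into the Jacobiator). For $n \geq 4$ every term in the sum involves some $l_i$ with $i \geq 3$ applied to arguments or as the outer map — one must check that, because $l_k = 0$ for $k\geq 3$, the only potentially surviving terms would be $l_{n-i+1}(l_i(\cdots),\cdots)$ with both $i \leq 2$ and $n-i+1 \leq 2$, forcing $n \leq 3$; hence the identity holds trivially for $n\geq 4$.

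\textbf{Main obstacle.} The only real work is bookkeeping the Koszul signs through the desuspension, i.e.\ matching the shuffle signs $\varepsilon(\sigma)$ appearing in the $L_\infty$ Jacobi identity against the plain signs in the graded Jacobi and derivation identities for $(\g,[\cdot,\cdot]_\g,d)$ — in particular confirming that the $(-1)^x$ in the formula for $l_2$ and the degree-$(-1)$ nature of $s^{-1}$ conspire to make everything consistent for $n=2,3$. This is a routine but sign-delicate computation; since the lemma is stated as classical (with references \cite{LS,LM}), I would present the degree count and the $n=2,3$ sign checks in enough detail to be convincing and note that the cases $n=1$ and $n\geq 4$ are immediate, rather than grinding through a fully general shuffle-sign induction.
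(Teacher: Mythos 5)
Your plan is correct and is exactly the standard d\'ecalage verification that the paper itself omits (the lemma is stated without proof, as a classical fact from \cite{LS,LM}): the cases $n=1,2,3$ of the generalized Jacobi identity encode $d^2=0$, the derivation property of $d$, and the graded Jacobi identity of $[\cdot,\cdot]_\g$, while for $n\ge 4$ every term would require both $i\le 2$ and $n-i+1\le 2$, which is impossible, so the identity holds trivially. One small repair to your degree bookkeeping: $d$ has degree $+1$ on $\g$, so $l_1=s^{-1}\circ d\circ s$ has degree $(+1)+(+1)+(-1)=+1$ with respect to the grading $(s^{-1}\g)^i=\g^{i+1}$, in agreement with the paper's convention that every $l_k$ has degree $+1$; with that fixed, your sign checks for $n=2,3$ go through as you describe.
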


\begin{defi}\label{def:weakly}
A {\em weakly filtered  $L_\infty$-algebra} is a pair $(\g,\huaF_{\bullet}\g)$, where $\g$ is an $L_\infty$-algebra and $\huaF_{\bullet}\g$ is a descending
filtration of the graded vector space $\g$ such that $\g=\huaF_1\g\supset\cdots\supset\huaF_n\g\supset\cdots$ and
\begin{itemize}
	\item[\rm(i)]
	 there exists $n\geq 1$ such that for all $k\geq n$ it holds that
$	l_k(\g,\cdots,\g)\subset \huaF_{k}\g,$

	\item[\rm(ii)]
	$\g$ is complete with respect to this filtration, i.e. there is an isomorphism of graded vector spaces
	$
	\g\cong\underleftarrow{\lim}~\g/\huaF_n\g
	$.

\end{itemize}
\end{defi}

\begin{defi}
The set of {\em \MC elements}, denoted by $\mathsf{MC}(\g)$, of a weakly filtered $L_\infty$-algebra $(\g,\huaF_{\bullet}\g)$ is the set of those $\alpha\in \g^0$ satisfying the \MC equation
\begin{eqnarray}\label{MC-equation}
\sum_{k=1}^{+\infty}\frac{1}{k!}l_k(\alpha,\cdots,\alpha)=0.
\end{eqnarray}
\end{defi}

Let $\alpha$ be an \MC element. Define $l_k^{\alpha}:\otimes^k\g\lon\g$  $(k\ge 1)$ by
\begin{eqnarray}
l_k^{\alpha}(x_1,\cdots,x_k)=\sum_{n=0}^{+\infty}\frac{1}{n!}l_{k+n}(\underbrace{\alpha,\cdots,\alpha}_n,x_1,\cdots,x_k).
\end{eqnarray}
\begin{rmk}The condition of being weakly filtered ensures convergence of the series figuring in the definition of \MC elements and \MC twistings above. Note that the notion of a \emph{filtered} $L_\infty$-algebra is due to Dolgushev and Rogers \cite{Dolgushev-Rogers}. For our purposes the weaker notion defined above suffices. 	
\end{rmk}

The following result is essentially contained in \cite[Section 4]{Getzler}; that paper works with a different type of $L_\infty$ algebras than weakly filtered ones, but this does not affect the arguments.

\begin{thm}\label{deformation-mc}
With the above notation, $(\g,\{l_k^{\alpha}\}_{k=1}^{+\infty})$ is a weakly filtered $L_\infty$-algebra, obtained from $\g$ by twisting with the \MC element $\alpha$. Moreover, $\alpha+\alpha'$ is an \MC element of   $(\g,\huaF_{\bullet}\g)$ if and only if $\alpha'$ is an \MC element of the twisted $L_\infty$-algebra $(\g,\{l_k^{\alpha}\}_{k=1}^{+\infty})$.\qed
\end{thm}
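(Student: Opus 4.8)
## Proof Proposal for Theorem \ref{deformation-mc}

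The plan is to derive both assertions from the defining property of $L_\infty$-algebras, namely the generalized Jacobi identity, by a formal manipulation of the relevant power series. The convergence of all series involved is guaranteed by the weakly filtered hypothesis: since $l_k(\g,\cdots,\g)\subset\huaF_k\g$ for $k\geq n$ and $\g$ is complete with respect to the filtration, the sums defining $l_k^\alpha$ and the \MC equation lie in the completion, so all rearrangements below are legitimate.

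First I would verify that each $l_k^\alpha$ is graded symmetric and of degree $1$; this is immediate from the corresponding properties of $l_{k+n}$ together with the fact that $\alpha$ has degree $0$, so inserting copies of $\alpha$ costs no Koszul sign and permuting the genuine arguments $x_1,\dots,x_k$ produces exactly the sign $\varepsilon(\sigma)$. Next, the heart of the argument is to establish the generalized Jacobi identity for the collection $\{l_k^\alpha\}$. I would expand
$$
\sum_{i=1}^{k}\sum_{\sigma\in\mathbb S_{(i,k-i)}}\varepsilon(\sigma)\,l_{k-i+1}^\alpha\bigl(l_i^\alpha(x_{\sigma(1)},\cdots,x_{\sigma(i)}),x_{\sigma(i+1)},\cdots,x_{\sigma(k)}\bigr)
$$
by substituting the series definitions of the two outer brackets, collecting all the inserted $\alpha$'s, and reindexing. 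The key bookkeeping point is that a term with a total of $m$ copies of $\alpha$, arising as $p$ copies fed into the inner bracket and $q$ copies into the outer one with $p+q=m$, is counted with weight $\tfrac{1}{p!\,q!}$; summing the binomial coefficients $\binom{m}{p}$ over the ways of distributing the $\alpha$'s among the two brackets (and over which of the original arguments go inside) reproduces precisely the full generalized Jacobi identity for $\{l_k\}$ evaluated on the $(m+k)$-tuple $(\alpha,\dots,\alpha,x_1,\dots,x_k)$, which vanishes because $\g$ is an $L_\infty$-algebra. Here the \MC equation for $\alpha$ is exactly what is needed to kill the ``extra'' terms in which all genuine arguments sit together — i.e. the $i=0$ contributions one has to add and subtract to complete the identity. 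This is the step I expect to be the main obstacle: the combinatorial reorganization of shuffles and Koszul signs is delicate, and one must be careful that the shuffle sums over $\mathbb S_{(i,k-i)}$ combine correctly with the implicit symmetrization over the $\alpha$-slots.

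For the second statement, I would observe that for $\alpha'\in\g^0$,
$$
\sum_{k=1}^{+\infty}\frac{1}{k!}\,l_k^\alpha(\alpha',\cdots,\alpha')
=\sum_{k=1}^{+\infty}\sum_{n=0}^{+\infty}\frac{1}{k!\,n!}\,l_{k+n}(\underbrace{\alpha,\cdots,\alpha}_n,\underbrace{\alpha',\cdots,\alpha'}_k),
$$
and that, collecting terms with a fixed total number $N=k+n$ of arguments, the inner sum $\sum_{k=0}^{N}\frac{1}{k!(N-k)!}l_N(\alpha,\dots,\alpha,\alpha',\dots,\alpha')=\frac{1}{N!}l_N(\alpha+\alpha',\cdots,\alpha+\alpha')$ by graded symmetry (with no signs, both being degree $0$), except that the $k=0$ term is missing. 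Adding and subtracting that missing $k=0$ term, which is $\sum_N\frac{1}{N!}l_N(\alpha,\dots,\alpha)=0$ since $\alpha$ is an \MC element, shows that the left-hand side above equals $\sum_{N\geq 1}\frac{1}{N!}l_N(\alpha+\alpha',\cdots,\alpha+\alpha')$. Hence $\alpha'$ satisfies the \MC equation of the twisted algebra if and only if $\alpha+\alpha'$ satisfies the \MC equation of $(\g,\huaF_\bullet\g)$, as claimed. I would also remark that the twisted brackets inherit the weak filtration property trivially, since $l_k^\alpha$ is an infinite sum of the $l_{k+j}$ which all eventually land in $\huaF_{k+j}\g\subseteq\huaF_k\g$, so $(\g,\{l_k^\alpha\})$ is again weakly filtered with the same filtration.
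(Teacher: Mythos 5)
Your proof is correct, and it is worth noting that the paper itself does not actually prove this theorem: it is stated with a reference to Getzler's work (Section 4 of \emph{Lie theory for nilpotent $L_\infty$-algebras}), together with the remark that the passage from Getzler's setting to weakly filtered $L_\infty$-algebras does not affect the arguments. What you have written is essentially the self-contained verification that the citation delegates: the $\frac{1}{p!\,q!}$ versus $\frac{1}{m!}\binom{m}{p}$ bookkeeping correctly reassembles the twisted Jacobi expression into the generalized Jacobi identity of $\{l_k\}$ evaluated on $(\alpha,\cdots,\alpha,x_1,\cdots,x_k)$, the Maurer--Cartan equation for $\alpha$ is indeed exactly what annihilates the leftover terms in which the inner bracket is fed only copies of $\alpha$, and the binomial regrouping in your second computation (adding and subtracting the missing $k=0$ term) is the standard argument for the statement about $\alpha+\alpha'$; since $\alpha,\alpha'$ have degree $0$ no Koszul signs interfere, as you say. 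You also address the one point the paper explicitly flags as the difference from Getzler's setting, namely convergence: the weakly filtered hypothesis makes the series defining $l_k^\alpha$ and the \MC equation converge, justifies the rearrangements, and (together with the closedness of $\huaF_k\g$ in the complete topology) shows the twisted brackets satisfy the same filtration condition. An alternative, slicker route — closer to how such statements are usually packaged — is to encode the $L_\infty$-structure as a square-zero coderivation on $\bar{\Sym}^c(\g)$ and twist by the ``formal exponential'' of $\alpha$, which replaces your shuffle combinatorics by a conjugation argument; your direct computation buys transparency at the cost of the sign and multiplicity bookkeeping you rightly identify as the delicate step, and it is carried out correctly.
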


One method for constructing explicit $L_\infty$-algebras is given by Voronov's derived brackets \cite{Vo}. Let us recall this construction.

\begin{defi}
A $V$-data consists of a quadruple $(L,\h,P,\Delta)$ where
\begin{itemize}
\item[$\bullet$] $(L,[\cdot,\cdot])$ is a graded Lie algebra,
\item[$\bullet$] $\h$ is an abelian graded Lie subalgebra of $(L,[\cdot,\cdot])$,
\item[$\bullet$] $P:L\lon L$ is a projection, that is $P\circ P=P$, whose image is $\h$ and kernel is a  graded Lie subalgebra of $(L,[\cdot,\cdot])$,
\item[$\bullet$] $\Delta$ is an element in $  \ker(P)^1$ such that $[\Delta,\Delta]=0$.
\end{itemize}
\end{defi}

\begin{thm}{\rm (\cite{Vo})}\label{thm:db}
Let $(L,\h,P,\Delta)$ be a $V$-data. Then $(\h,\{{l_k}\}_{k=1}^{+\infty})$ is an $L_\infty$-algebra where
\begin{eqnarray}\label{V-shla}
l_k(a_1,\cdots,a_k)=P\underbrace{[\cdots[[}_k\Delta,a_1],a_2],\cdots,a_k],\quad\mbox{for homogeneous}~   a_1,\cdots,a_k\in\h.
\end{eqnarray}
 We call $\{{l_k}\}_{k=1}^{+\infty}$ the {\em higher derived brackets} of the $V$-data $(L,\h,P,\Delta)$. \qed
\end{thm}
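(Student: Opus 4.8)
The plan is to check directly, following \cite{Vo}, that $(\h,\{l_k\}_{k=1}^{+\infty})$ satisfies the $L_\infty$-algebra axioms. Set $D:=[\Delta,\cdot]\colon L\to L$; it has degree $1$, and since $[\Delta,\Delta]=0$ the graded Jacobi identity gives $2D^2=[[\Delta,\Delta],\cdot]=0$, so $(L,[\cdot,\cdot],D)$ is a dg Lie algebra and $l_k(a_1,\dots,a_k)=P[\cdots[[Da_1,a_2],a_3],\cdots,a_k]$ for $a_i\in\h$. Since $P$ has degree $0$, each $l_k$ has degree $1$. Graded symmetry follows from the abelianness of $\h$: adjacent transpositions generate $\mathbb S_k$, and for homogeneous $y\in L$ and $a_i,a_{i+1}\in\h$ the graded Jacobi identity gives
\[
[[y,a_i],a_{i+1}]-(-1)^{|a_i||a_{i+1}|}[[y,a_{i+1}],a_i]=[y,[a_i,a_{i+1}]]=0,
\]
so $[\cdots[[Da_1,a_2],a_3],\cdots,a_k]$ is graded symmetric in the $a_i$, and applying the linear map $P$ preserves this.

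The remaining content is the generalized Jacobi identities. For fixed $n$ this is the vanishing of a graded symmetric, degree-$1$ map $\h^{\times n}\to\h$, so by a standard polarization argument (legitimate because $\mathrm{char}\,\K=0$, and harmless since all the data of a $V$-data extend along the relevant scalar extensions) it suffices to treat the diagonal case: replace every argument by $ta$ with $a\in\h$ of degree $0$ and $t$ a formal even parameter, pass to $L[[t]]$, and prove the single identity obtained by summing the left-hand sides of the generalized Jacobi identities over $n$ with weights $\tfrac1{n!}$. Introduce
\[
\Delta_a:=e^{-\ad_{ta}}(\Delta)\in L[[t]]^1,\qquad \beta:=\sum_{p\ge1}\tfrac1{p!}\,l_p(\underbrace{ta,\dots,ta}_{p})\in\h[[t]]^1.
\]
From \eqref{V-shla} one computes $l_p(\underbrace{ta,\dots,ta}_{p})=(-t)^pP(\ad_a^p\Delta)$, hence $\beta=P(\Delta_a)$ using $P\Delta=0$; and, rewriting each term of the weighted sum via \eqref{V-shla} (each $l_{q+1}(c,\underbrace{ta,\dots,ta}_{q})$ equals $(-t)^qP(\ad_a^q[\Delta,c])$) and reindexing by $p=i$, $q=n-i$, one obtains
\[
\sum_{n\ge1}\frac1{n!}\sum_{i=1}^n\binom{n}{i}\,l_{n-i+1}\Bigl(l_i(\underbrace{ta,\dots,ta}_{i}),\underbrace{ta,\dots,ta}_{n-i}\Bigr)=P\bigl(e^{-\ad_{ta}}[\Delta,\beta]\bigr).
\]
So it all comes down to showing $P\bigl(e^{-\ad_{ta}}[\Delta,\beta]\bigr)=0$.

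To finish, observe that $e^{-\ad_{ta}}$ is an automorphism of the graded Lie algebra $L[[t]]$, so $e^{-\ad_{ta}}[\Delta,\beta]=[\Delta_a,e^{-\ad_{ta}}\beta]$; moreover $e^{-\ad_{ta}}\beta=\beta$ because $\ad_{ta}\beta=t[a,\beta]=0$ ($\h$ abelian). Decompose $\Delta_a=\beta+\gamma$ with $\gamma:=(1-P)\Delta_a\in\ker(P)[[t]]^1$. Then $[\Delta_a,\beta]=[\beta,\beta]+[\gamma,\beta]=[\gamma,\beta]$, since $[\beta,\beta]=0$ ($\h$ abelian). On the other hand $[\Delta_a,\Delta_a]=e^{-\ad_{ta}}[\Delta,\Delta]=0$, and expanding the left-hand side (using $[\beta,\gamma]=[\gamma,\beta]$, as $|\beta|=|\gamma|=1$, together with $[\beta,\beta]=0$) gives $2[\gamma,\beta]+[\gamma,\gamma]=0$, so $[\gamma,\beta]=-\tfrac12[\gamma,\gamma]$. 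But $\ker(P)$ is a subalgebra, so $[\gamma,\gamma]\in\ker(P)[[t]]$, hence $[\gamma,\beta]\in\ker(P)[[t]]$ and $P[\gamma,\beta]=0$. Therefore $P\bigl(e^{-\ad_{ta}}[\Delta,\beta]\bigr)=P[\Delta_a,\beta]=P[\gamma,\beta]=0$, as needed; together with the first paragraph this shows $(\h,\{l_k\})$ is an $L_\infty$-algebra.

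The one genuinely laborious step is the generating-function identity in the second paragraph: one has to expand $\sum_{i}\binom{n}{i}l_{n-i+1}(l_i(\cdots),\cdots)$ via \eqref{V-shla}, move each $\Delta$ and each $\ad_a$ into standard position while keeping track of the Koszul signs, and recognize the result as $P(e^{-\ad_{ta}}[\Delta,\beta])$ — together with the bookkeeping needed to justify the formal manipulations, all of which takes place safely in $L[[t]]$. Once this identification is in place, the three defining properties of a $V$-data, namely $D^2=0$, $\h$ abelian, and $\ker(P)$ a subalgebra, carry the rest of the argument exactly as in the third paragraph.
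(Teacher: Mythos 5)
The paper itself offers no argument for this theorem: it is imported from Voronov \cite{Vo} and stated without proof, so the only meaningful comparison is with Voronov's original argument, which verifies the higher Jacobi identities directly (identifying the $n$-th Jacobiator of the derived brackets with $P$ of an iterated bracket built from $\tfrac12[\Delta,\Delta]$, and using that $\h$ is abelian and $\ker(P)$ is a subalgebra). Your route is genuinely different and, in its core, correct: the preliminary checks (degree $1$, graded symmetry via $[[y,a],b]-(-1)^{|a||b|}[[y,b],a]=[y,[a,b]]=0$) are right; the generating identity $\sum_{n}\tfrac1{n!}\sum_i\binom{n}{i}l_{n-i+1}\bigl(l_i(ta,\dots,ta),ta,\dots,ta\bigr)=P\bigl(e^{-\ad_{ta}}[\Delta,\beta]\bigr)$ with $\beta=P(\Delta_a)$ is a correct computation (it uses $P\Delta=0$, i.e.\ $\Delta\in\ker(P)^1$, exactly where it should); and the endgame $P[\Delta_a,\beta]=P[\gamma,\beta]=-\tfrac12P[\gamma,\gamma]=0$ deploys the three V-data axioms ($[\Delta,\Delta]=0$ via $[\Delta_a,\Delta_a]=0$, abelianness of $\h$, and $\ker(P)$ a subalgebra) in precisely the right places. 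What this buys over the direct check is a short, conceptual argument in which all Koszul-sign bookkeeping is concentrated in one reindexing; what it costs is the reduction to the diagonal.

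That reduction is the one step you must state more carefully. As written (``replace every argument by $ta$ with $a\in\h$ of degree $0$''), you only recover the generalized Jacobi identities on degree-zero arguments, and degree-zero elements of $\h$ alone cannot detect the identities on odd (or general homogeneous) inputs. The standard repair, which your parenthetical about scalar extensions points toward but does not carry out, is to adjoin graded parameters: given homogeneous $a_1,\dots,a_n\in\h$, extend all the V-data $\Lambda$-linearly with $\Lambda$ free graded-commutative on $\theta_1,\dots,\theta_n$, $|\theta_i|=-|a_i|$, and take $a=\theta_1a_1+\cdots+\theta_na_n$, which is even and lies in the (still abelian) subalgebra $\h\otimes\Lambda$; your computation goes through verbatim there, and extracting the coefficient of $t^n$ and of $\theta_n\cdots\theta_1$, together with the graded symmetry you already proved and $\mathrm{char}\,\K=0$, yields the identity on $(a_1,\dots,a_n)$. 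With that made explicit, the proof is complete.
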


There is also an $L_\infty$-algebra structure on a bigger space, which is used to study simultaneous deformations of morphisms between Lie algebras in \cite{Barmeier,Fregier-Zambon-1,Fregier-Zambon-2}.

\begin{thm}{\rm (\cite{Vo})}\label{thm:db-big-homotopy-lie-algebra}
Let $(L,\h,P,\Delta)$ be a $V$-data. Then the graded vector space $s^{-1}L\oplus \h$  is an $L_\infty$-algebra where
\begin{eqnarray*}\label{V-shla-big-algebra}
l_1(s^{-1}x,a)&=&(-s^{-1}[\Delta,x],P(x+[\Delta,a])),\\
l_2(s^{-1}x,s^{-1}y)&=&(-1)^xs^{-1}[x,y],\\
l_k(s^{-1}x,a_1,\cdots,a_{k-1})&=&P[\cdots[[x,a_1],a_2]\cdots,a_{k-1}],\quad k\geq 2,\\
l_k(a_1,\cdots,a_{k-1},a_k)&=&P[\cdots[[\Delta,a_1],a_2]\cdots,a_{k}],\quad k\geq 2.
\end{eqnarray*}
Here $a,a_1,\cdots,a_k$ are homogeneous elements of $\h$ and $x,y$ are homogeneous elements of $L$. All the other $L_\infty$-algebra products that are not obtained from the ones written above by permutations of arguments, will vanish.\qed
\end{thm}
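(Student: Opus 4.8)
The plan is to verify directly that the operations $\{l_k\}_{k\ge 1}$ on $s^{-1}L\oplus\h$ satisfy the generalized Jacobi identities of an $L_\infty$-algebra, exploiting the extreme sparsity of the $l_k$: by construction $l_k(\xi_1,\cdots,\xi_n)=0$ as soon as two of its arguments lie in the summand $s^{-1}L$, the sole exception being $l_2$ evaluated on two such arguments, and graded symmetry is built into the defining formulas. So I would fix $n\ge 1$ homogeneous arguments $\xi_1,\cdots,\xi_n\in s^{-1}L\oplus\h$ and organize the identity
\[
\sum_{i=1}^{n}\sum_{\sigma\in\mathbb S_{(i,n-i)}}\varepsilon(\sigma)\,l_{n-i+1}\bigl(l_i(\xi_{\sigma(1)},\cdots,\xi_{\sigma(i)}),\xi_{\sigma(i+1)},\cdots,\xi_{\sigma(n)}\bigr)=0
\]
by splitting according to how many of the $\xi_j$ lie in $s^{-1}L$ (zero, one, two, or at least three); the vanishing above kills all but a short, explicit list of composites in the double sum in each case, leaving a finite identity to check by hand.

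When all $\xi_j\in\h$, the only operations that occur are the higher derived brackets $l_k(a_1,\cdots,a_k)=P[\cdots[[\Delta,a_1],a_2],\cdots,a_k]$ of the $V$-data $(L,\h,P,\Delta)$, so this case is exactly Theorem~\ref{thm:db} and nothing new is needed. In the case of at least three arguments in $s^{-1}L$ almost every composite vanishes, the only residue being (for three such arguments) the graded Jacobi identity for the bracket $s^{-1}[\cdot,\cdot]$, i.e. the graded Jacobi identity in $L$; and the two-argument case similarly reduces, via $P^2=P$, the fact that $\Ker P$ is a graded Lie subalgebra of $L$, and $[\h,\h]=0$, to short identities such as $P[\Delta,P[\Delta,a]]=0$ together with the statement that $(L,[\cdot,\cdot],[\Delta,\cdot])$ is a dg Lie algebra --- the latter holding because $[\Delta,\Delta]=0$ forces $[\Delta,[\Delta,\cdot]]=\frac{1}{2}[[\Delta,\Delta],\cdot]=0$, so that Lemma~\ref{Quillen-construction} applies to it. The substantive case is the one with a single argument $s^{-1}x$ and the rest in $\h$: here one must show that the \emph{action} brackets $l_k(s^{-1}x,a_1,\cdots,a_{k-1})=P[\cdots[[x,a_1],a_2]\cdots,a_{k-1}]$ are simultaneously compatible with the derived brackets on $\h$, with the differential $[\Delta,\cdot]$ on $L$, and with the extra summand $Px$ in $l_1(s^{-1}x)=-s^{-1}[\Delta,x]+Px$. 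I expect this to be the main obstacle: it requires careful Koszul-sign bookkeeping over the shuffles appearing in the sum, together with repeated use of the graded Jacobi identity in $L$ and of the three $V$-data facts just listed --- already the $n=2$ instance collapses, after such manipulations, to an equality of the shape $\pm P[x-Px,[\Delta,a]]=\pm P[x,P[\Delta,a]]$, which uses all of them at once.

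Alternatively, and more in the spirit of \cite{Vo}, one can try to realize $(s^{-1}L\oplus\h,\{l_k\})$ itself as the higher derived brackets (Theorem~\ref{thm:db}) of an enlarged $V$-data built from $(L,\h,P,\Delta)$ by an acyclic (contractible) extension of $L$ accounting for the desuspended summand $s^{-1}L$, with $\Delta$ modified accordingly, thereby reducing the whole statement to Theorem~\ref{thm:db}. Regardless of the route, a useful consistency check for the bracket formulas is that the inclusion $a\mapsto(0,a)$ exhibits $(\h,\{l_k|_\h\})$ as a sub-$L_\infty$-algebra (no $s^{-1}L$-argument ever appears, so only the derived brackets of Theorem~\ref{thm:db} are used), that $\h$ is in fact an $L_\infty$-ideal, and that the quotient $L_\infty$-algebra $(s^{-1}L\oplus\h)/\h$ is, up to the usual suspension signs, $s^{-1}$ of the dg Lie algebra obtained by equipping $L$ with the differential $[\Delta,\cdot]$; thus the asserted $L_\infty$-algebra is an extension of the latter by the former, which is precisely the shape displayed by all the brackets in the statement.
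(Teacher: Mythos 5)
There is no in-paper argument to compare you with: Theorem \ref{thm:db-big-homotopy-lie-algebra} is quoted from \cite{Vo} with a \qed{} and never proved in this paper, so your attempt has to be judged on its own. Your structural analysis is sound: the sparsity observation, the case split by the number of arguments from $s^{-1}L$, the identification of the all-$\h$ case with Theorem \ref{thm:db}, the reduction of the three-or-more case to the graded Jacobi identity in $L$, and the list of $V$-data facts that drive everything ($P^2=P$, $\Ker P$ a subalgebra, $[\h,\h]=0$, $[\Delta,\Delta]=0$) are all correct; so is the key auxiliary identity of the type $P[x-Px,[\Delta,a]]=P[x,P[\Delta,a]]$, which indeed follows by splitting $x$ and $[\Delta,a]$ along $\Img P\oplus\Ker P$, and the consistency check that the brackets exhibit $s^{-1}L\oplus\h$ as a strict extension of $s^{-1}(L,[\Delta,\cdot],[\cdot,\cdot])$ by $(\h,\{l_k\})$. (Minor quibble: $P[\Delta,P[\Delta,a]]=0$ is really the $l_1^2=0$ identity on $\h$, already contained in Theorem \ref{thm:db}, rather than part of the two-$s^{-1}L$ case.)

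The gap is in execution, not in the idea: the substantive cases --- one argument $s^{-1}x$ together with arbitrarily many $a_i\in\h$, and two arguments from $s^{-1}L$ together with $\h$-arguments --- are only verified at the lowest arities ($n=2$, and implicitly $n=3$), while for general $n$ you merely announce ``careful Koszul-sign bookkeeping over the shuffles''. That general-arity shuffle identity is precisely the content of Voronov's theorem, so as written your first route is a plan rather than a proof. If you want a complete argument, your second route is the one to pursue: enlarging the $V$-data (e.g.\ by a contractible extension of $L$ absorbing the desuspended summand, with $\Delta$ modified accordingly) so that the whole package becomes an instance of the higher derived brackets of Theorem \ref{thm:db} is essentially how this statement is obtained in the literature (\cite{Vo}, see also \cite{Fregier-Zambon-1}), and it avoids redoing the combinatorics that Theorem \ref{thm:db} already encapsulates.
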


\begin{rmk}\label{thm:db-big-homotopy-lie-algebra-small}{\rm (\cite{Fregier-Zambon-1})}
Let $L'$ be a graded Lie subalgebra of $L$ that satisfies $[\Delta,L']\subset L'$. Then $s^{-1}L'\oplus \h$ is an $L_\infty$-subalgebra of the above $L_\infty$-algebra $(s^{-1}L\oplus\h,\{l_k\}_{k=1}^{+\infty})$.
\end{rmk}

\subsection{The $L_\infty$-algebra that controls deformations of relative Rota-Baxter Lie algebras}
Let $\g$ and $V$ be two vector spaces. Then we have a graded Lie algebra $(\oplus_{n=0}^{+\infty}C^{n}(\g\oplus V,\g\oplus V),[\cdot,\cdot]_{\NR})$. This graded Lie algebra gives rise to a V-data, and an $L_\infty$-algebra naturally.
\begin{pro}\label{pro:VdataL}
We have a $V$-data $(L,\h,P,\Delta)$ as follows:
\begin{itemize}
\item[$\bullet$] the graded Lie algebra $(L,[\cdot,\cdot])$ is given by $\big(\oplus_{n=0}^{+\infty}C^{n}(\g\oplus V,\g\oplus V),[\cdot,\cdot]_{\NR}\big)$;
\item[$\bullet$] the abelian graded Lie subalgebra $\h$ is given by
\begin{equation}\label{defi:h}
\h:=\oplus_{n=0}^{+\infty}C^{-1|(n+1)}(\g\oplus V,\g\oplus V)=\oplus_{n=0}^{+\infty}\Hom(\wedge^{n+1}V,\g);
\end{equation}
\item[$\bullet$] $P:L\lon L$ is the projection onto the subspace $\h$;
\item[$\bullet$] $\Delta=0$.
\end{itemize}

Consequently, we obtain an $L_\infty$-algebra $(s^{-1}L\oplus\h,\{l_k\}_{k=1}^{+\infty})$, where $l_i$ are given by
\begin{eqnarray*}
l_1(s^{-1}Q,\theta)&=&P(Q),\\
  l_2(s^{-1}Q,s^{-1}Q')&=&(-1)^Qs^{-1}[Q,Q']_{\NR}, \\
l_k(s^{-1}Q,\theta_1,\cdots,\theta_{k-1})&=&P[\cdots[Q,\theta_1]_{\NR},\cdots,\theta_{k-1}]_{\NR},
\end{eqnarray*}
for homogeneous elements   $\theta,\theta_1,\cdots,\theta_{k-1}\in \h$, homogeneous elements  $Q,Q'\in L$ and all the other possible combinations vanish.
\end{pro}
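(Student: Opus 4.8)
The plan is to check directly that the quadruple $(L,\h,P,\Delta)$ meets the four conditions in the definition of a $V$-data, and then to obtain the displayed $L_\infty$-structure by specializing Theorem~\ref{thm:db-big-homotopy-lie-algebra} to the case $\Delta=0$.

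First I would assemble the ingredients that are already available. The pair $\big(\oplus_{n\ge 0}C^{n}(\g\oplus V,\g\oplus V),[\cdot,\cdot]_{\NR}\big)$ is a graded Lie algebra, and the bidegree discussion shows that it is the \emph{direct sum} of its homogeneous components $C^{k|l}(\g\oplus V,\g\oplus V)$ over the bidegrees $k|l$ with $k,l\ge -1$ and $k+l\ge 0$ (this uses the finiteness statement $\wedge^{n}(\g\oplus V)\cong\bigoplus_{k+l=n}\g^{k,l}$). Hence the map $P$ that annihilates every homogeneous component except those of bidegree $-1|l$ with $l\ge 1$ is a well-defined idempotent whose image is exactly $\h=\bigoplus_{n\ge 0}\Hom(\wedge^{n+1}V,\g)$; this settles the first bullet and the assertion that $P$ is a projection onto $\h$.

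Next I would verify the two algebraic conditions. That $\h$ is an abelian graded Lie subalgebra is precisely Lemma~\ref{Zero-condition-2}: by Lemma~\ref{important-lemma-2} the bracket of elements of bidegrees $-1|l_f$ and $-1|l_g$ lies in bidegree $-2|(l_f+l_g)$, which must be $0$ since the first entry of a bidegree is at least $-1$, so $[\h,\h]_{\NR}=0$. For the kernel condition, note that $\ker P=\bigoplus_{k\ge 0}C^{k|l}(\g\oplus V,\g\oplus V)$; if $f,g$ are homogeneous with $k_f,k_g\ge 0$, then Lemma~\ref{important-lemma-2} puts $[f,g]_{\NR}$ in bidegree $(k_f+k_g)|(l_f+l_g)$ with $k_f+k_g\ge 0$, and since the $\NR$ bracket is bilinear and respects the bigrading, $\ker P$ is closed under it, hence a graded Lie subalgebra. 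Finally $\Delta=0$ manifestly lies in $\ker(P)^{1}$ and satisfies $[\Delta,\Delta]=0$. This completes the check that $(L,\h,P,\Delta)$ is a $V$-data, so Theorem~\ref{thm:db} applies on $\h$ and Theorem~\ref{thm:db-big-homotopy-lie-algebra} on $s^{-1}L\oplus\h$.

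For the explicit brackets I would simply substitute $\Delta=0$ into the formulas of Theorem~\ref{thm:db-big-homotopy-lie-algebra}: $l_1(s^{-1}Q,\theta)=(-s^{-1}[\Delta,Q],P(Q+[\Delta,\theta]))$ collapses to the element $P(Q)\in\h$; $l_2(s^{-1}Q,s^{-1}Q')=(-1)^{Q}s^{-1}[Q,Q']_{\NR}$ and $l_k(s^{-1}Q,\theta_1,\dots,\theta_{k-1})=P[\cdots[[Q,\theta_1]_{\NR},\theta_2]_{\NR}\cdots,\theta_{k-1}]_{\NR}$ are unchanged; and every bracket of the form $l_k(\theta_1,\dots,\theta_k)$ with $k\ge 2$ contains the factor $[\Delta,\theta_1]=0$, hence vanishes, which is why "all the other possible combinations vanish." I do not anticipate a genuine obstacle: the whole content is bidegree bookkeeping, already packaged in Lemmas~\ref{important-lemma-2} and~\ref{Zero-condition-2}; the one point deserving a moment's care is that $L$ is the direct sum, not the direct product, of its bidegree components, so that $P$ is actually well defined and $\ker P$ has the description used above.
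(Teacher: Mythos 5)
Your proposal is correct and follows essentially the same route as the paper: verify the $V$-data axioms using the bidegree Lemmas \ref{important-lemma-2} and \ref{Zero-condition-2} (abelianness of $\h$, $P^2=P$, $\ker P$ a graded Lie subalgebra, $\Delta=0$ trivially admissible) and then read off the brackets from Theorem \ref{thm:db-big-homotopy-lie-algebra} with $\Delta=0$. The only difference is that you spell out details the paper leaves implicit, such as the direct-sum bidegree decomposition of $L$, the explicit description of $\ker P$, and why the $l_k(\theta_1,\dots,\theta_k)$ terms vanish.
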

\begin{proof}
  Note that
  $
  \h=\oplus_{n=0}^{+\infty}C^{-1|(n+1)}(\g\oplus V,\g\oplus V)=\oplus_{n=0}^{+\infty}\Hom(\wedge^{n+1}V,\g).
  $
By Lemma \ref{Zero-condition-2},  we deduce that $\h$ is an abelian subalgebra of $(L,[\cdot,\cdot])$.

   Since $P$ is the projection onto $\h$, it is obvious that $P\circ P=P$. It is also straightforward to see that the kernel  of $P$ is a  graded Lie subalgebra of $(L,[\cdot,\cdot])$. Thus $(L,\h,P,\Delta=0)$ is a V-data.

   The other conclusions follows immediately from Theorem \ref{thm:db-big-homotopy-lie-algebra}.
\end{proof}
By Lemma \ref{important-lemma-2}, we obtain that
\begin{equation}\label{defi:Lprime}
L'=\oplus_{n=0}^{+\infty}C^{n|0}(\g\oplus V,\g\oplus V),\quad\mbox{where}\quad C^{n|0}(\g\oplus V,\g\oplus V)=\Hom(\wedge^{n+1}\g,\g)\oplus\Hom(\wedge^{n}\g\otimes V,V)
 \end{equation}is a graded Lie subalgebra of $\big(\oplus_{n=0}^{+\infty}C^{n}(\g\oplus V,\g\oplus V),[\cdot,\cdot]_{\NR}\big)$.

\begin{cor}\label{cor:Linfty}
 With the above notation,  $(s^{-1}L'\oplus \h,\{l_i\}_{i=1}^{+\infty})$ is an $L_\infty $-algebra, where $l_i$ are given by
 \begin{eqnarray*}
  l_2(s^{-1}Q,s^{-1}Q')&=&(-1)^Qs^{-1}[Q,Q']_{\NR}, \\
l_k(s^{-1}Q,\theta_1,\cdots,\theta_{k-1})&=&P[\cdots[Q,\theta_1]_{\NR},\cdots,\theta_{k-1}]_{\NR},
\end{eqnarray*}
for homogeneous elements   $\theta_1,\cdots,\theta_{k-1}\in \h$, homogeneous elements  $Q,Q'\in L'$, and all the other possible combinations vanish.

Moreover, $(s^{-1}L'\oplus \h,\{l_i\}_{i=1}^{+\infty})$ is weakly filtered with $n=3$ in the sense of Definition \ref{def:weakly} with the filtration given by  $$
\huaF_1=s^{-1}L'\oplus \h,\quad \huaF_2=P[s^{-1}L'\oplus \h,\h]_\NR,\cdots, \huaF_k=P\underbrace{[\cdots[}_ks^{-1}L'\oplus \h,\h]_{\NR},\cdots,\h]_{\NR}, \cdots.
$$
\end{cor}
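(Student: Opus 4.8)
The plan is to deduce both assertions from the general constructions already in place, specializing the $V$-data $(L,\h,P,\Delta=0)$ of Proposition \ref{pro:VdataL}. For the $L_\infty$-algebra structure I would apply Remark \ref{thm:db-big-homotopy-lie-algebra-small} to the subspace $L'=\bigoplus_{n\ge0}C^{n|0}(\g\oplus V,\g\oplus V)$: by Lemma \ref{important-lemma-2} this is a graded Lie subalgebra of $(L,[\cdot,\cdot]_\NR)$, as recorded in \eqref{defi:Lprime}, and since $\Delta=0$ the hypothesis $[\Delta,L']\subseteq L'$ of that remark is vacuous. Hence $(s^{-1}L'\oplus\h,\{l_i\})$ is an $L_\infty$-subalgebra of the $L_\infty$-algebra of Proposition \ref{pro:VdataL}, and the displayed formulas for $l_2$ and $l_k$ $(k\ge2)$ are just the restrictions of the ones there. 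The only point to observe is that $l_1$ vanishes on $s^{-1}L'\oplus\h$: indeed $l_1(s^{-1}Q,\theta)=P(Q)$, and $P$ kills $L'$ because the bidegrees $n|0$ occurring in $L'$ are disjoint from the bidegrees $-1|l$ spanning $\h$, so $L'\subseteq\ker P$.

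For the weakly filtered claim the main input is the bidegree bookkeeping of Lemma \ref{important-lemma-2}: bracketing with an element of $\h=\bigoplus_{l\ge1}C^{-1|l}$ lowers the first bidegree index by one. As $\h$ is moreover abelian by Lemma \ref{Zero-condition-2}, only the $s^{-1}L'$-summand feeds into the iterated brackets defining $\huaF_k$, and one reads off for $k\ge2$ that
\[
\huaF_k\subseteq\bigoplus_{m\ge k-1}C^{-1|m}(\g\oplus V,\g\oplus V)=\bigoplus_{m\ge k-1}\Hom(\wedge^m V,\g)
\]
(with equality $\huaF_2=\h$, since $\Id_\g\in C^{0|0}\subseteq L'$ and $[C^{0|0},\Hom(\wedge^m V,\g)]_\NR$ already recovers all of $\Hom(\wedge^m V,\g)$). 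Granting this containment, Definition \ref{def:weakly}(i) holds with $n=3$: for $k\ge3$ the only $L_\infty$-bracket of $s^{-1}L'\oplus\h$ not forced to vanish is $l_k(s^{-1}Q,\theta_1,\cdots,\theta_{k-1})=P[\cdots[Q,\theta_1]_\NR,\cdots,\theta_{k-1}]_\NR$, which lies in $\huaF_k$ by definition, whereas $n=2$ fails because $l_2(s^{-1}Q,s^{-1}Q')=(-1)^Qs^{-1}[Q,Q']_\NR\in s^{-1}L'$ is not in $\huaF_2=\h$. Definition \ref{def:weakly}(ii) follows since $\Hom(\wedge^m V,\g)$ sits in cohomological degree $m-1$, so $\huaF_k$ meets the degree-$d$ part of $s^{-1}L'\oplus\h$ only when $d\ge k-2$; hence in each fixed degree the tower $\{(s^{-1}L'\oplus\h)/\huaF_k\}_k$ stabilizes, yielding $s^{-1}L'\oplus\h\cong\underleftarrow{\lim}\,(s^{-1}L'\oplus\h)/\huaF_k$ and in particular $\bigcap_k\huaF_k=0$.

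The step I expect to cost the most effort is checking that the displayed chain $\huaF_1\supseteq\huaF_2\supseteq\cdots$ is genuinely descending. This is not automatic: because $P$ is applied only after all brackets are formed, the inner iterated bracket of an element of $L'$ against members of $\h$ need not lie in $L'$, so $\huaF_{k+1}\subseteq\huaF_k$ has to be extracted from the explicit form of the NR bracket together with the bidegree estimate above (and the fact that everything past $\huaF_1$ already lies in $\h=\huaF_2$). The remaining assertions are direct transcriptions of Proposition \ref{pro:VdataL} and Remark \ref{thm:db-big-homotopy-lie-algebra-small}.
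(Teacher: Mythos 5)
Your argument follows the paper's own route: the $L_\infty$-structure is obtained exactly as you say, by combining Proposition \ref{pro:VdataL} with Remark \ref{thm:db-big-homotopy-lie-algebra-small} (and your observation that $l_1$ dies because $L'\subseteq\Ker P$ is a correct, if implicit, part of that), and your bidegree bookkeeping via Lemmas \ref{important-lemma-2} and \ref{Zero-condition-2} is an expanded version of the paper's one-line justification of the filtration conditions, which merely notes that $h\in\h$ splits as $\sum_i h_i$ with $h_i\in\Hom(\wedge^iV,\g)$ and that the only component not raising the $V$-arity fast, namely $h_1\in C^{-1|1}$, is square-zero as an endomorphism of $\g\oplus V$ --- in bidegree language, bracketing with $\h$ strictly lowers the first index, so on a fixed element of $L'$ it can be iterated only finitely often; this is precisely your containment $\huaF_k\subseteq\oplus_{m\ge k-1}\Hom(\wedge^mV,\g)$ and the ensuing degreewise stabilization giving conditions (i) and (ii) of Definition \ref{def:weakly}. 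The one step you defer, that the chain $\huaF_1\supseteq\huaF_2\supseteq\cdots$ is genuinely descending, is a fair observation: it is indeed not formal (the inner iterated brackets leave $L'\oplus\h$, so one cannot simply absorb a bracket), and the paper's proof passes over it in silence, so you are not missing an argument the paper supplies. It can be closed along the lines you hint at: your remark that $[\widehat{\Id_\g},\theta]_{\NR}=\theta$ gives $\huaF_2=\h$, and an analogous explicit spanning computation (choosing $Q$ in $\Hom(\wedge^{k-2}\g\otimes V,V)$ built from functionals on $\g$, together with suitable $\phi_i\in\Hom(V,\g)$) shows $\huaF_k=\oplus_{m\ge k-1}\Hom(\wedge^mV,\g)$ whenever $\g\neq 0$, the degenerate cases being trivial; alternatively one can observe that everything the paper actually uses downstream (convergence of the twisted brackets and of the Maurer--Cartan series) only needs your upper bound on $\huaF_k$ and the degreewise termination. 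Apart from the mild off-by-one ambiguity in how many brackets define $\huaF_k$ (inherited from the paper's display, and resolved the same way you resolve it), your proposal is correct.
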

\begin{proof}
  The stated formulas for the $L_\infty$-structure  follow from Remark \ref{thm:db-big-homotopy-lie-algebra-small} and Proposition \ref{pro:VdataL}. To see that the given filtration satisfies the conditions of Definition \ref{def:weakly} it suffices to note that any element $h\in \h$ can be written as $h=\sum_{i=1}^{+\infty}{h_i}$ where $h_i\in \Hom(\wedge^i V,\g)$ and that the term $h_1:V\to\g$ is
  \emph{nilpotent} (even has square zero) when viewed as an endomorphism of $\g\oplus V$.
\end{proof}

Now we are ready to formulate the main result in this subsection.

\begin{thm}\label{deformation-rota-baxter}
  Let $\g$ and $V$ be two vector spaces,  $\mu\in\Hom(\wedge^2\g,\g),~\rho\in\Hom(\g\otimes V,V)$ and $T\in\Hom(V,\g)$. Then $((\g,\mu),\rho,T)$ is a relative Rota-Baxter Lie algebra if and only if  $(s^{-1}\pi,T)$ is an \MC element of the $L_\infty$-algebra $(s^{-1}L'\oplus \h,\{l_i\}_{i=1}^{+\infty})$ given in Corollary \ref{cor:Linfty}, where $\pi=\mu+\rho\in C^{1|0}(\g\oplus V,\g\oplus V)$.
\end{thm}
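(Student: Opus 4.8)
The plan is to unwind the Maurer-Cartan equation for the weakly filtered $L_\infty$-algebra $(s^{-1}L'\oplus\h,\{l_i\}_{i=1}^{+\infty})$ evaluated at the candidate element $(s^{-1}\pi,T)\in (s^{-1}L'\oplus\h)^0$ and match its components against the two defining conditions of a relative Rota-Baxter Lie algebra, namely that $\pi=\mu+\rho$ is an \MC element of $(\oplus_{n}C^{n|0}(\g\oplus V,\g\oplus V),[\cdot,\cdot]_\NR)$ (equivalently, by Proposition~\ref{deformation-Lie-and-rep}, that $(\g,\mu;\rho)$ is a \LR pair) and that $T$ satisfies \eqref{eq:defiO}. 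First I would check the degree bookkeeping: $\pi$ lives in $C^{1|0}$, so $s^{-1}\pi$ has degree $0$ in $s^{-1}L'$; $T\in\Hom(V,\g)=C^{-1|1}(\g\oplus V,\g\oplus V)\subset\h$ also has degree $0$; hence $(s^{-1}\pi,T)$ is a legitimate candidate \MC element, and all brackets appearing are the ones listed in Corollary~\ref{cor:Linfty}.

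\textbf{Expanding the MC equation.} Next I would substitute $\alpha=(s^{-1}\pi,T)$ into $\sum_{k\ge 1}\frac{1}{k!}l_k(\alpha,\cdots,\alpha)=0$. Using multilinearity and the fact that only the brackets $l_2(s^{-1}Q,s^{-1}Q')=(-1)^Q s^{-1}[Q,Q']_\NR$ and $l_k(s^{-1}Q,\theta_1,\cdots,\theta_{k-1})=P[\cdots[Q,\theta_1]_\NR,\cdots,\theta_{k-1}]_\NR$ are nonzero, the sum collapses: the only term with a nonzero $s^{-1}L'$-component is $\frac{1}{2}l_2(s^{-1}\pi,s^{-1}\pi)=\frac12 s^{-1}[\pi,\pi]_\NR$, while the $\h$-component is $\sum_{k\ge 1}\frac{1}{k!}l_k(s^{-1}\pi,T,\cdots,T)=\sum_{k\ge 1}\frac{1}{k!}P[\cdots[[\pi,T]_\NR,T]_\NR,\cdots,T]_\NR$. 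So the \MC equation splits into the pair of equations $[\pi,\pi]_\NR=0$ and $\sum_{k\ge 1}\frac{1}{k!}P\,\mathrm{ad}_T^{k-1}[\pi,T]_\NR=0$, where $\mathrm{ad}_T=[\cdot,T]_\NR$. The first is exactly the condition that $(\g,\mu;\rho)$ is a \LR pair, by Theorem~\ref{thm:deformation-lie-rep}/Proposition~\ref{deformation-Lie-and-rep}.

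\textbf{Identifying the second component with \eqref{eq:defiO}.} The key computational step — and the main obstacle — is to show that the infinite sum $\sum_{k\ge 1}\frac{1}{k!}P[\cdots[[\pi,T]_\NR,T]_\NR,\cdots,T]_\NR$ truncates and equals (a scalar multiple of) the relative Rota-Baxter identity. By Lemma~\ref{important-lemma-2}, $[\pi,T]_\NR$ has bidegree $0|1$, and each further bracket with $T$ (bidegree $-1|1$) lowers the first index by $1$; since bidegrees $k|l$ require $k\ge -1$, the iterated bracket $[\cdots[[\pi,T]_\NR,T]_\NR,\cdots,T]_\NR$ with $j$ copies of $T$ has bidegree $(1-j)|j$ and vanishes for $j\ge 3$. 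Moreover $P$ projects onto $\oplus_n C^{-1|(n+1)}$, so only the $j=2$ term survives under $P$ (the $j=0$ term $P(\pi)=0$ since $\pi\in C^{1|0}$, and the $j=1$ term $P[\pi,T]_\NR=0$ since $[\pi,T]_\NR\in C^{0|1}$). Thus the second \MC equation reduces to $\frac12 P[[\pi,T]_\NR,T]_\NR=0$, i.e. $[[\mu+\rho,T]_\NR,T]_\NR=0$. What remains is the concrete but routine verification that, under the isomorphism $C^{-1|2}(\g\oplus V,\g\oplus V)\cong\Hom(\wedge^2 V,\g)$ given by the lift construction, the operator $\frac12[[\mu+\rho,T]_\NR,T]_\NR$ applied to $u\odot v$ produces exactly $[Tu,Tv]_\g-T(\rho(Tu)v-\rho(Tv)u)$ (up to an overall nonzero sign/factor), which is precisely the statement of Proposition~\ref{pro:gla} on the abelian subalgebra identification $\Courant{T,T}=[[\mu+\rho,T]_\NR,T]_\NR$ up to sign. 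I would carry out this unwinding carefully with the shuffle signs, invoking Proposition~\ref{pro:gla} to shortcut the bracket computation, and conclude that the \MC equation for $(s^{-1}\pi,T)$ holds if and only if $[\pi,\pi]_\NR=0$ and $T$ is a relative Rota-Baxter operator, which is the desired equivalence.
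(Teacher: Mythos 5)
Your proposal is correct and follows essentially the same route as the paper: expand the \MC equation for $(s^{-1}\pi,T)$, use the bidegree lemmas (Lemmas \ref{important-lemma-2} and \ref{Zero-condition-2}) to truncate the sum so that only $[\pi,\pi]_{\NR}=0$ and $[[\pi,T]_{\NR},T]_{\NR}=0$ survive, and identify these with the \LR pair condition and the relative Rota-Baxter identity via Propositions \ref{deformation-Lie-and-rep} and \ref{pro:gla}. The only blemishes are immaterial bookkeeping slips (you drop the sign $(-1)^{\pi}$ in $l_2$ and write $\tfrac{1}{k!}$ where the symmetrized coefficient is $\tfrac{1}{(k-1)!}$), which do not affect the conclusion since the relevant terms are set to zero.
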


\begin{proof}
Since $(s^{-1}L'\oplus \h)^0=s^{-1}\big(\Hom(\g\wedge\g,\g)\oplus\Hom(\g\otimes V,V)\big)\oplus\Hom(V,\g)\subset\huaF_{1}(s^{-1}L'\oplus \h)$,   the   \MC equation is well defined. Let $(s^{-1}\pi,T)$ be an \MC element of $(s^{-1}L'\oplus \h,\{l_i\}_{i=1}^{+\infty})$. By Lemma \ref{important-lemma-2} and Lemma \ref{Zero-condition-2}, we have
\begin{eqnarray*}
||[\pi,T]_{\NR}||=0|1,\quad ||[[\pi,T]_{\NR},T]_{\NR}||=-1|2,\quad [[[\pi,T]_{\NR},T]_{\NR},T]_{\NR}=0.
\end{eqnarray*}
Then, by Corollary \ref{cor:Linfty}, we have
\begin{eqnarray*}
&&\sum_{k=1}^{+\infty}\frac{1}{k!}l_k\Big((s^{-1}\pi,T),\cdots,(s^{-1}\pi,T)\Big)\\
&=&\frac{1}{2!}l_2\Big((s^{-1}\pi,T),(s^{-1}\pi,T)\Big)+\frac{1}{3!}l_3\Big((s^{-1}\pi,T),(s^{-1}\pi,T),(s^{-1}\pi,T)\Big)\\
&=&\Big(-s^{-1}\frac{1}{2}[\pi,\pi]_{\NR},\frac{1}{2}[[\pi,T]_{\NR},T]_{\NR}\Big)\\
&=&(0,0).
\end{eqnarray*}
Thus, we obtain $
 ~[\pi,\pi]_{\NR}=0$ and $
[[\pi,T]_{\NR},T]_{\NR}=0,
$
 which implies that  $(\g,\mu)$ is a Lie algebra, $(V;\rho)$ is its representation and   $T$ is a relative Rota-Baxter operator on  the Lie algebra $(\g,\mu)$ with respect to the representation $(V;\rho)$.
\end{proof}

\begin{rmk}
 Since the axiom defining
a relative Rota-Baxter Lie algebra is not quadratic, it can be anticipated that the deformation complex of a Rota-Baxter Lie algebra is a fully-fledged $L_\infty$-algebra rather than a differential graded Lie algebra.
\end{rmk}

Let $((\g,\mu),\rho,T)$ be a relative Rota-Baxter Lie algebra. Denote by $\pi=\mu+\rho\in C^{1|0}(\g\oplus V,\g\oplus V)$. By Theorem \ref{deformation-rota-baxter}, we obtain that $(s^{-1}\pi,T)$ is an \MC element of the $L_\infty$-algebra  $(s^{-1}L'\oplus \h,\{l_i\}_{i=1}^{+\infty})$ given in Corollary \ref{cor:Linfty}.
 Now  we are ready to give the
  $L_\infty$-algebra that controls deformations of the relative Rota-Baxter Lie algebra.

\begin{thm}\label{thm:Simultaneous-deformation}
With the above notation, we have the twisted $L_\infty$-algebra $\big(s^{-1}L'\oplus \h,\{l_k^{(s^{-1}\pi,T)}\}_{k=1}^{+\infty}\big)$ associated to a relative Rota-Baxter Lie algebra $((\g,\mu),\rho,T)$, where $\pi=\mu+\rho$.

Moreover, for
linear maps $T'\in\Hom(V,\g)$, $\mu'\in\Hom(\wedge^2\g,\g)$ and $\rho'\in\Hom(\g,\gl(V))$, the triple $((\g,\mu+\mu'),\rho+\rho',T+T')$ is again a relative Rota-Baxter Lie algebra if and only if $\big(s^{-1}(\mu'+\rho'),T'\big)$ is an \MC
element of the twisted $L_\infty$-algebra $\big(s^{-1}L'\oplus \h,\{l_k^{(s^{-1}\pi,T)}\}_{k=1}^{+\infty}\big)$.
\end{thm}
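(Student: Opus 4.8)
The plan is to deduce the statement by combining the \MC characterization of Theorem~\ref{deformation-rota-baxter} with the general twisting procedure of Theorem~\ref{deformation-mc}, so that essentially no new computation is needed. By Theorem~\ref{deformation-rota-baxter}, the hypothesis that $((\g,\mu),\rho,T)$ is a relative Rota-Baxter Lie algebra means precisely that $(s^{-1}\pi,T)$, with $\pi=\mu+\rho$, is an \MC element of the $L_\infty$-algebra $(s^{-1}L'\oplus\h,\{l_i\}_{i=1}^{+\infty})$ of Corollary~\ref{cor:Linfty}. That corollary also shows this $L_\infty$-algebra is weakly filtered (with $n=3$), so Theorem~\ref{deformation-mc} applies and produces the twisted weakly filtered $L_\infty$-algebra $\big(s^{-1}L'\oplus\h,\{l_k^{(s^{-1}\pi,T)}\}_{k=1}^{+\infty}\big)$; this is the first assertion.

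For the equivalence, I would first check the degrees so that the claim even makes sense: $\mu'+\rho'\in C^{1|0}(\g\oplus V,\g\oplus V)$, hence $s^{-1}(\mu'+\rho')$ has degree $0$ in $s^{-1}L'$, and $T'\in\Hom(V,\g)=\Hom(\wedge^{1}V,\g)$ has degree $0$ in $\h$; thus $(s^{-1}(\mu'+\rho'),T')$ is a degree-$0$ element, so its \MC equation in the twisted algebra is meaningful. Now the second part of Theorem~\ref{deformation-mc} gives that $(s^{-1}(\mu'+\rho'),T')$ is an \MC element of the twisted $L_\infty$-algebra if and only if $(s^{-1}\pi,T)+(s^{-1}(\mu'+\rho'),T')$ is an \MC element of the untwisted one. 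Since addition in $C^{1|0}$ is componentwise, this sum equals $\big(s^{-1}((\mu+\mu')+(\rho+\rho')),\,T+T'\big)$, which is exactly the element that Theorem~\ref{deformation-rota-baxter} associates with the triple $((\g,\mu+\mu'),\rho+\rho',T+T')$ — here one uses that $(\mu+\mu')+(\rho+\rho')\in C^{1|0}$ and $T+T'\in\Hom(V,\g)$, so Theorem~\ref{deformation-rota-baxter} applies verbatim to the deformed data. Hence this sum is an \MC element of $(s^{-1}L'\oplus\h,\{l_i\}_{i=1}^{+\infty})$ if and only if $((\g,\mu+\mu'),\rho+\rho',T+T')$ is a relative Rota-Baxter Lie algebra; chaining the two equivalences completes the proof.

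The argument is formal once Theorems~\ref{deformation-rota-baxter} and~\ref{deformation-mc} are in hand, so I expect no genuine obstacle. The single point that requires care — and which has already been settled in Corollary~\ref{cor:Linfty} — is that the weakly filtered structure there legitimizes the twisting, that is, it guarantees convergence of the series defining $l_k^{(s^{-1}\pi,T)}$ and of the \MC equation. All the nontrivial content of the theorem is thus carried by the \MC characterization of Theorem~\ref{deformation-rota-baxter}.
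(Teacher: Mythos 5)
Your proposal is correct and takes essentially the same route as the paper: the paper's own proof also consists of combining the \MC characterization of Theorem \ref{deformation-rota-baxter} with the twisting equivalence of Theorem \ref{deformation-mc} applied to the weakly filtered $L_\infty$-algebra of Corollary \ref{cor:Linfty}. Your version is if anything slightly more complete, since you spell out the degree check and both directions of the equivalence, while the paper records only one direction explicitly.
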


\begin{proof}
If $((\g,\mu+\mu'),\rho+\rho',T+T')$ is   a relative Rota-Baxter Lie algebra, then
by Theorem \ref{deformation-rota-baxter}, we deduce that $(s^{-1}(\mu+\mu'+\rho+\rho'),T+T')$ is an \MC
element of the $L_\infty$-algebra given in Corollary \ref{cor:Linfty}. Moreover, by Theorem \ref{deformation-mc}, we obtain that $(s^{-1}(\mu'+\rho'),T')$ is an \MC
element of the $L_\infty$-algebra $\big(s^{-1}L'\oplus \h,\{l_k^{(s^{-1}\pi,T)}\}_{k=1}^{+\infty}\big)$.
\end{proof}

Let $(\g,\mu)$ be a Lie algebra and $(V;\rho)$ a representation of $(\g,\mu)$.
By Theorem \ref{thm:deformation-lie-rep} and Lemma \ref{Quillen-construction}, we have an $L_\infty$-algebra structure on the graded vector space $\oplus_{k=0}^{+\infty}s^{-1}C^{k|0}(\g\oplus V,\g\oplus V)$.

Let $T:V\longrightarrow\g$ be a relative Rota-Baxter operator on a Lie algebra
$(\g,\mu)$ with respect to a representation $(V;\rho)$.
By Theorem \ref{thm:deformation-relative-rb-o} and Lemma \ref{Quillen-construction}, we have an $L_\infty$-algebra structure on the graded vector space $\oplus_{k=1}^{+\infty}\Hom(\wedge^{k}V,\g)$.

The above $L_\infty$-algebras are related as follows.
\begin{thm}\label{L-infinity-ex}
Let $((\g,\mu),\rho,T)$ be a relative Rota-Baxter Lie algebra. Then the $L_\infty$-algebra $\big(s^{-1}L'\oplus \h,\{l_k^{(s^{-1}\pi,T)}\}_{k=1}^{+\infty}\big)$ is a strict extension of the $L_\infty$-algebra $ \oplus_{k=0}^{+\infty}s^{-1}C^{k|0}(\g\oplus V,\g\oplus V) $ by the $L_\infty$-algebra $ \oplus_{k=1}^{+\infty}\Hom(\wedge^{k}V,\g)$, that is, we have the following short exact sequence of $L_\infty$-algebras:
\begin{equation}
0\longrightarrow\oplus_{k=1}^{+\infty}\Hom(\wedge^{k}V,\g)\stackrel{\iota}{\longrightarrow}s^{-1}L'\oplus \h\stackrel{p}{\longrightarrow} \oplus_{k=0}^{+\infty}s^{-1}C^{k|0}(\g\oplus V,\g\oplus V)\longrightarrow 0,
\end{equation}
where $\iota(\theta)=(0,\theta)$ and $p(s^{-1}f,\theta)=s^{-1}f$.
\end{thm}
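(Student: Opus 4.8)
The plan is to verify directly that the stated maps $\iota$ and $p$ are morphisms of $L_\infty$-algebras, that the sequence is exact as a sequence of graded vector spaces, and that the extension is strict in the sense that all the structure maps respect the splitting as a graded vector space $s^{-1}L'\oplus\h$. First I would identify the three $L_\infty$-algebras concretely: by Lemma \ref{Quillen-construction} the base $\oplus_{k=0}^{+\infty}s^{-1}C^{k|0}(\g\oplus V,\g\oplus V)$ carries the $L_\infty$-structure with $l_1=s^{-1}d_\pi$ and $l_2$ coming from $[\cdot,\cdot]_\NR$ (and no higher brackets), where $\pi=\mu+\rho$; similarly the fibre $\oplus_{k=1}^{+\infty}\Hom(\wedge^k V,\g)=\h$ carries the $L_\infty$-structure coming from the dg Lie algebra $(\h,\Courant{\cdot,\cdot},d_T)$ of Theorem \ref{thm:deformation-relative-rb-o}, again with only $l_1$ and $l_2$ nonzero. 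Since $\iota(\theta)=(0,\theta)$ and $p(s^{-1}f,\theta)=s^{-1}f$, injectivity of $\iota$, surjectivity of $p$, and $\ker p=\Img\iota$ are immediate from the direct-sum decomposition, so the sequence is exact at the level of graded vector spaces.

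The substance is checking that $\iota$ and $p$ commute with all the (twisted) brackets $l_k^{(s^{-1}\pi,T)}$. I would compute these twisted brackets explicitly from the formulas of Theorem \ref{thm:db-big-homotopy-lie-algebra} (restricted via Remark \ref{thm:db-big-homotopy-lie-algebra-small} and Corollary \ref{cor:Linfty}) together with the twisting formula $l_k^{\alpha}(x_1,\cdots,x_k)=\sum_{n\geq 0}\frac{1}{n!}l_{k+n}(\alpha,\cdots,\alpha,x_1,\cdots,x_k)$ at $\alpha=(s^{-1}\pi,T)$. Bidegree bookkeeping via Lemma \ref{important-lemma-2} and Lemma \ref{Zero-condition-2} keeps the sums finite: for instance $[[\pi,T]_\NR,T]_\NR$ has bidegree $-1|2$ and one more bracket with an element of $\h$ vanishes, so only finitely many terms survive. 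For $p$: on the $s^{-1}L'$-component the twisted $l_1^{(s^{-1}\pi,T)}(s^{-1}Q,\theta)$ has $s^{-1}L'$-part $-s^{-1}[\pi,Q]_\NR$ (the $P$-projected part lands in $\h$, hence is killed by $p$), which matches $s^{-1}d_\pi$; and $l_2^{(s^{-1}\pi,T)}$ restricted to $s^{-1}L'$ is $\pm s^{-1}[Q,Q']_\NR$, matching the base bracket; the higher twisted brackets $l_k^{(s^{-1}\pi,T)}(s^{-1}Q,\theta_1,\cdots)$ and those with several $\h$-arguments all take values in $\h$, hence are annihilated by $p$, and this is exactly why the base algebra has no higher brackets. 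For $\iota$: one checks that the twisted brackets with all arguments in $\h$ reproduce $d_T$ and $\Courant{\cdot,\cdot}$, using $\Courant{\theta,\phi}=\pm[[\mu+\rho,\theta]_\NR,\phi]_\NR$ and $d_T=\Courant{T,\cdot}$, and that $\h$ is a subcomplex because brackets of $\h$-elements stay in $\h$ (abelian-ness from Lemma \ref{Zero-condition-2} makes $l_2$ on $\h$ come entirely from the $\Delta$-twist, i.e. from $\pi$ and $T$).

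Finally I would observe that strictness is automatic from the construction: the underlying graded vector space is literally the direct sum $s^{-1}L'\oplus\h$ with $\iota$ and $p$ the inclusion and projection, so there is nothing further to split. The main obstacle I anticipate is purely organizational rather than conceptual — assembling the twisted brackets $l_k^{(s^{-1}\pi,T)}$ into closed form and matching the signs (the Koszul signs from desuspension, the $(-1)^Q$ in $l_2$, and the $(-1)^{n-1}$ conventions in $d_\pi$, $\partial$ and $\Courant{\cdot,\cdot}$) so that $p\circ l_k^{(s^{-1}\pi,T)}$ and $l_k\circ p^{\otimes k}$ agree on the nose; a clean way to handle this is to package everything through the coderivation description of Remark \ref{NR-coder} and Theorem \ref{thm:db-big-homotopy-lie-algebra}, where the morphism property becomes a single identity of coalgebra maps rather than an infinite family of bracket identities.
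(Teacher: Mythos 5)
Your proposal is correct and follows essentially the same route as the paper: the paper also computes the twisted brackets $l_k^{(s^{-1}\pi,T)}$ explicitly, using the bidegree Lemmas \ref{important-lemma-2} and \ref{Zero-condition-2} to truncate the twisting sums, and then reads off that the $s^{-1}L'$-components reproduce the dg Lie structure of the \LR pair while all higher and $\h$-valued terms are killed by $p$ and restrict on $\h$ to $d_T$ and $\Courant{\cdot,\cdot}$, so that $\iota$ and $p$ are strict morphisms. The only remaining work in your plan is the sign bookkeeping you already identified, which the paper handles by the same explicit bracket computations.
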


\begin{proof}For any $(s^{-1}f,\theta)\in (s^{-1}L'\oplus \h)^{n-2}$,
by Lemma \ref{important-lemma-2} and Lemma \ref{Zero-condition-2}, we obtain that
\begin{eqnarray*}
||[\pi,\theta]_{\NR}||=0|(n-1),\quad ||[[\pi,T]_{\NR},\theta]_{\NR}||=-1|n,\quad[[[\pi,T]_{\NR},T]_{\NR},\theta]_{\NR}=0.
\end{eqnarray*}
Moreover, for $1\le k\le n$, we have
$$
||\underbrace{[\cdots[[}_{ k}f,T]_{\NR},T]_{\NR},\cdots,T]_{\NR}||=(n-1-k)|k,
$$
and for $n+1\le k$, we have
$
\underbrace{[\cdots[[}_{ k}f,T]_{\NR},T]_{\NR},\cdots,T]_{\NR}=0.
$
Therefore, we have
\begin{eqnarray}
\nonumber l_1^{(s^{-1}\pi,T)}(s^{-1}f,\theta)&=&\sum_{k=0}^{+\infty}\frac{1}{k!}l_{k+1}\big(\underbrace{(s^{-1}\pi,T),\cdots,(s^{-1}\pi,T)}_k,(s^{-1}f,\theta)\big)\\
                          \nonumber    &=&l_2(s^{-1}\pi,s^{-1}f)+l_3(s^{-1}\pi,T,\theta)+\frac{1}{n!}l_{n+1}(f,\underbrace{T,\cdots,T}_n)\\
                          \label{eq:l1muT}    &=&\big(-s^{-1}[\pi,f]_{\NR},[[\pi,T]_{\NR},\theta]_{\NR}+\frac{1}{n!}\underbrace{[\cdots[[}_nf,T]_{\NR},T]_{\NR},\cdots,T]_{\NR}\big).
\end{eqnarray}

For any $(s^{-1}f_1,\theta_1)\in (s^{-1}L'\oplus \h)^{n_1-2},~(s^{-1}f_2,\theta_2)\in (s^{-1}L'\oplus \h)^{n_2-2}$, we have
\begin{eqnarray*}
||f_1||=(n_1-1)|0,\quad||\theta_1||=-1|(n_1-1),\quad||f_2||=(n_2-1)|0,\quad||\theta_2||=-1|(n_2-1).
\end{eqnarray*}
By Lemma \ref{important-lemma-2},
$
||[[\pi,\theta_1]_{\NR},\theta_1]_{\NR}||=-1|(n_1+n_2-2),~
||[f_1,\theta_2]_{\NR}||=(n_1-2)|(n_2-1),~
||[f_2,\theta_1]_{\NR}||=(n_2-2)|(n_1-1).
$
By Lemma \ref{Zero-condition-2}, for $ 1\le k$, we have
\begin{eqnarray*}
[[[\cdots[[\pi,\underbrace{T]_{\NR},T]_{\NR}\cdots,T}_{k}]_{\NR},\theta_1]_{\NR},\theta_2]_{\NR}=0.
\end{eqnarray*}
By Lemma \ref{important-lemma-2}, for $1\le k\le n_1-1$, we obtain that
\begin{eqnarray*}
||[[\cdots[[f_1,\underbrace{T]_{\NR},T]_{\NR}\cdots,T}_{k}]_{\NR},\theta_2]_{\NR}||=(n_1-k-2)|(n_2+k-1).
\end{eqnarray*}
By Lemma \ref{Zero-condition-2}, for $ n_1\le k$, we have
$
[[\cdots[[f_1,\underbrace{T]_{\NR},T]_{\NR}\cdots,T}_{k}]_{\NR},\theta_2]_{\NR}=0.
$
By Lemma \ref{important-lemma-2}, for $1\le k\le n_2-1$, we obtain that
$
||[[\cdots[[f_2,\underbrace{T]_{\NR},T]_{\NR}\cdots,T}_{k}]_{\NR},\theta_1]_{\NR}||=(n_2-k-2)|(n_1+k-1).
$
By Lemma \ref{Zero-condition-2}, for $ n_2\le k$, we have
$
[[\cdots[[f_2,\underbrace{T]_{\NR},T]_{\NR}\cdots,T}_{k}]_{\NR},\theta_1]_{\NR}=0.
$
Therefore, we have
\begin{eqnarray*}
&&l_2^{(s^{-1}\pi,T)}\big((s^{-1}f_1,\theta_1),(s^{-1}f_2,\theta_2)\big)\\
&=&\sum_{k=0}^{+\infty}\frac{1}{k!}l_{k+2}\big(\underbrace{(s^{-1}\pi,T),\cdots,(s^{-1}\pi,T)}_k,(s^{-1}f_1,\theta_1),(s^{-1}f_2,\theta_2)\big)\\
                              &=&l_2(s^{-1}f_1,s^{-1}f_2)+l_3(s^{-1}\pi,\theta_1,\theta_2)+\frac{1}{(n_1-1)!}l_{n_1+1}(f_1,\underbrace{T,\cdots,T}_{n_1-1},\theta_2)\\
                              &&+(-1)^{n_1n_2}\frac{1}{(n_2-1)!}l_{n_2+1}(f_2,\underbrace{T,\cdots,T}_{n_2-1},\theta_1)\\
                              &=&\Big((-1)^{n_1-1}s^{-1}[f_1,f_2]_{\NR},[[\pi,\theta_1]_{\NR},\theta_2]_{\NR}+\frac{1}{(n_1-1)!}[[\cdots[[f_1,\underbrace{T]_{\NR},T]_{\NR}\cdots,T}_{n_1-1}]_{\NR},\theta_2]_{\NR}\\
                              &&+(-1)^{n_2n_1}\frac{1}{(n_2-1)!}[[\cdots[[f_2,\underbrace{T]_{\NR},T]_{\NR}\cdots,T}_{n_2-1}]_{\NR},\theta_1]_{\NR}\Big).
\end{eqnarray*}
Similarly, for $  m\geq3$, $(s^{-1}f_i,\theta_i)\in (s^{-1}L'\oplus \h)^{n_i-2}$, $1\le i\le m$, we have
\begin{eqnarray*}
&&\nonumber l_m^{(s^{-1}\pi,T)}\big((s^{-1}f_1,\theta_1),\cdots,(s^{-1}f_m,\theta_m)\big)\\
&=&\Big(0,\sum_{i=1}^{m}(-1)^{\alpha}\frac{1}{(n_i+1-m)!}[\cdots[f_i,\underbrace{T]_{\NR},\cdots,T}_{n_i+1-m}]_{\NR},\theta_1]_{\NR},\cdots,\theta_{i-1}]_{\NR},\theta_{i+1}]_{\NR},\cdots,\theta_{m}]_{\NR}\Big),
\end{eqnarray*}
where $\alpha=n_i(n_1+\cdots+n_{i-1})$. Thus, $\iota$ and $p$ are strict morphisms between $L_\infty$-algebras and satisfy $p\circ\iota=0.$
\end{proof}

\section{Cohomology and  infinitesimal deformations of relative Rota-Baxter Lie algebras}\label{sec:cohomology}
In this section, $((\g,\mu),\rho,T)$ is a relative Rota-Baxter Lie algebra, i.e. $\rho:\g\lon\gl(V)$ is a representation of the Lie algebra $(\g,\mu)$ and $T:V\lon\g$ is a relative Rota-Baxter operator. We define the cohomology of relative Rota-Baxter Lie algebras and show that the two-dimensional cohomology groups classify infinitesimal deformations. We also establish a relationship between the cohomology of relative Rota-Baxter Lie algebras and the cohomology of triangular Lie bialgebras.
\subsection{Cohomology of relative Rota-Baxter Lie algebras}\label{sec:cohomologyRRB}

We define the cohomology of a relative Rota-Baxter Lie algebra using the twisted $L_\infty$-algebra given in Theorem \ref{thm:Simultaneous-deformation}.

By Theorem \ref{thm:Simultaneous-deformation}, $\big(s^{-1}L'\oplus \h,\{l_k^{(s^{-1}\pi,T)}\}_{k=1}^{+\infty}\big)$ is an $L_\infty$-algebra, where $\pi=\mu+\rho$, $\h$ and $L'$ are given by \eqref{defi:h} and \eqref{defi:Lprime} respectively. In particular, we have
 \begin{lem}\label{lem:complex}
 $\big(s^{-1}L'\oplus \h,l_1^{(s^{-1}\pi,T)}\big)$ is a complex, i.e.  $l_1^{(s^{-1}\pi,T)}\circ l_1^{(s^{-1}\pi,T)}=0.$
  \end{lem}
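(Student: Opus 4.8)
The plan is to deduce the statement immediately from the $L_\infty$-algebra axioms, with no computation needed. Recall from Theorem~\ref{deformation-rota-baxter} that, since $((\g,\mu),\rho,T)$ is a relative Rota-Baxter Lie algebra, the pair $(s^{-1}\pi,T)$ with $\pi=\mu+\rho$ is a degree $0$ element of the weakly filtered $L_\infty$-algebra of Corollary~\ref{cor:Linfty} satisfying the \MC equation. Hence the twisting procedure of Theorem~\ref{deformation-mc} applies and produces again a weakly filtered $L_\infty$-algebra $\big(s^{-1}L'\oplus \h,\{l_k^{(s^{-1}\pi,T)}\}_{k=1}^{+\infty}\big)$. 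The generalized Jacobi identity of an $L_\infty$-algebra, specialized to $n=1$, reads $l_1(l_1(x))=0$ for every homogeneous $x$; applied to the twisted structure this is exactly $l_1^{(s^{-1}\pi,T)}\circ l_1^{(s^{-1}\pi,T)}=0$. So the proof amounts to invoking Theorem~\ref{deformation-rota-baxter} (to see that the \MC hypothesis of Theorem~\ref{deformation-mc} holds) and then reading off the unary Jacobi identity.

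I would also mention, for the reader who prefers an explicit check, that the claim can be verified directly from the formula~\eqref{eq:l1muT}: applying $l_1^{(s^{-1}\pi,T)}$ twice and sorting the output by bidegree, the vanishing of the $\g\oplus V$-component reduces to $[\pi,\pi]_{\NR}=0$ together with the graded Jacobi identity for $[\cdot,\cdot]_{\NR}$, while the vanishing of the $\Hom(\wedge^{\bullet}V,\g)$-component uses in addition $[[\pi,T]_{\NR},T]_{\NR}=0$ — the two identities that encode that $((\g,\mu),\rho,T)$ is a relative Rota-Baxter Lie algebra — once more combined with graded Jacobi. This route is longer and involves a short case analysis on bidegrees, so I would only sketch it and keep the conceptual argument as the actual proof.

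The main point requiring care is therefore merely that the twist $(s^{-1}L'\oplus\h,\{l_k^{(s^{-1}\pi,T)}\})$ is legitimately defined: one needs $(s^{-1}\pi,T)$ to sit in degree $0$ (already observed in the proof of Theorem~\ref{deformation-rota-baxter}) and to satisfy the \MC equation (the content of Theorem~\ref{deformation-rota-baxter}), together with the weak filtration of Corollary~\ref{cor:Linfty} guaranteeing convergence of the series defining $l_k^{(s^{-1}\pi,T)}$. Granting these, there is no genuine obstacle: $l_1^{(s^{-1}\pi,T)}\circ l_1^{(s^{-1}\pi,T)}=0$ is a formal consequence of the $L_\infty$ axioms for the twisted structure.
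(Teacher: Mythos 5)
Your argument is correct and coincides with the paper's own proof: the paper also deduces $l_1^{(s^{-1}\pi,T)}\circ l_1^{(s^{-1}\pi,T)}=0$ directly from the fact that the twisted structure $\big(s^{-1}L'\oplus \h,\{l_k^{(s^{-1}\pi,T)}\}_{k=1}^{+\infty}\big)$ is an $L_\infty$-algebra (Theorem \ref{thm:Simultaneous-deformation}, resting on Theorems \ref{deformation-rota-baxter} and \ref{deformation-mc}), the unary Jacobi identity giving the claim. Your additional sketch of a direct bidegree check via \eqref{eq:l1muT} is a fine optional remark but not needed.
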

\begin{proof}
  Since $\big(s^{-1}L'\oplus \h,\{l_k^{(s^{-1}\pi,T)}\}_{k=1}^{+\infty}\big)$ is an $L_\infty$-algebra, we have $l_1^{(s^{-1}\pi,T)}\circ l_1^{(s^{-1}\pi,T)}=0.$
\end{proof}

Define the set of  $0$-cochains $\frkC^0(\g,\rho,T)$ to be $0$, and define
the set of  $1$-cochains $\frkC^1(\g,\rho,T)$ to be $\gl(\g)\oplus \gl(V)$. For $n\geq 2$, define the space of    $n$-cochains $\frkC^n(\g,\rho,T)$ by
\begin{eqnarray*}
\frkC^n(\g,\rho,T)&:=&\frkC^n(\g,\rho)\oplus \frkC^n(T)=C^{(n-1)|0}(\g\oplus V,\g\oplus V)\oplus C^{-1|(n-1)}(\g\oplus V,\g\oplus V)\\
&=&\Big(\Hom(\wedge^n\g,\g)\oplus \Hom(\wedge^{n-1}\g\otimes V,V)\Big)\oplus\Hom(\wedge^{n-1}V,\g).
\end{eqnarray*}

Define the {\em coboundary operator} $\huaD:\frkC^n(\g,\rho,T)\lon \frkC^{n+1}(\g,\rho,T)$ by
\begin{equation}\label{cohomology-of-RB}
  \huaD(f,\theta)=(-1)^{n-2}\big(-[\pi,f]_{\NR},[[\pi,T]_{\NR},\theta]_{\NR}+\frac{1}{n!}\underbrace{[\cdots[[}_nf,T]_{\NR},T]_{\NR},\cdots,T]_{\NR}\big),
\end{equation}
where $f\in\Hom(\wedge^n\g,\g)\oplus \Hom(\wedge^{n-1}\g\otimes V,V)$ and $\theta\in \Hom(\wedge^{n-1}V,\g).$

\begin{thm}\label{cohomology-of-relative-RB}
  With the above notation,  $(\oplus _{n=0}^{+\infty}\frkC^n(\g,\rho,T),\huaD)$ is a cochain complex, i.e. $\huaD\circ \huaD=0.$
\end{thm}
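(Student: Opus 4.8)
The plan is to identify the coboundary operator $\huaD$ with the twisted differential $l_1^{(s^{-1}\pi,T)}$ and then invoke Lemma \ref{lem:complex}. More precisely, the first step is to match the cochain complex $(\oplus_{n\ge 0}\frkC^n(\g,\rho,T),\huaD)$ with the $L_\infty$-algebra $(s^{-1}L'\oplus\h,\{l_k^{(s^{-1}\pi,T)}\}_{k\ge 1})$ from Theorem \ref{thm:Simultaneous-deformation}. The key observation is the degree identification: an element $(f,\theta)\in\frkC^n(\g,\rho,T)$ with $f\in C^{(n-1)|0}(\g\oplus V,\g\oplus V)$ and $\theta\in C^{-1|(n-1)}(\g\oplus V,\g\oplus V)=\Hom(\wedge^{n-1}V,\g)$ corresponds to $(s^{-1}f,\theta)\in(s^{-1}L'\oplus\h)^{n-2}$, since desuspension shifts the degree of $L'$ down by one and $\h$ already carries the appropriate grading. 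Under this identification, formula \eqref{eq:l1muT} in the proof of Theorem \ref{L-infinity-ex} gives exactly
\[
l_1^{(s^{-1}\pi,T)}(s^{-1}f,\theta)=\big(-s^{-1}[\pi,f]_{\NR},\,[[\pi,T]_{\NR},\theta]_{\NR}+\tfrac{1}{n!}\underbrace{[\cdots[[}_{n}f,T]_{\NR},T]_{\NR},\cdots,T]_{\NR}\big),
\]
which is precisely $(-1)^{n-2}$ times the right-hand side of \eqref{cohomology-of-RB}, i.e. $\huaD(f,\theta)=(-1)^{n-2}l_1^{(s^{-1}\pi,T)}(s^{-1}f,\theta)$ after transporting back along $s^{-1}$.

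Second, I would check that the sign twist $(-1)^{n-2}$ is harmless for the squaring property. Writing $\huaD|_{\frkC^n}=(-1)^{n-2}\Phi\circ l_1^{(s^{-1}\pi,T)}\circ\Phi^{-1}$ where $\Phi$ is the degree identification above, one computes $\huaD\circ\huaD$ on $\frkC^n$ as $(-1)^{(n-1)-2}(-1)^{n-2}\Phi\circ (l_1^{(s^{-1}\pi,T)})^2\circ\Phi^{-1}$, and since $(-1)^{(n-1)-2}(-1)^{n-2}=(-1)^{2n-5}=-1$ is a nonzero scalar, the vanishing of $(l_1^{(s^{-1}\pi,T)})^2$ — which is Lemma \ref{lem:complex}, itself an immediate consequence of the generalized Jacobi identity for $n=1$ in the twisted $L_\infty$-algebra — forces $\huaD\circ\huaD=0$. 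One should also separately confirm the low-degree cases: that $\huaD$ maps $\frkC^0=0$ and $\frkC^1=\gl(\g)\oplus\gl(V)$ correctly into the next cochain group and that $\huaD^2=0$ holds there too, but these follow from the same identification once one notes $\frkC^1(\g,\rho,T)=\gl(\g)\oplus\gl(V)$ sits inside $C^{0|0}(\g\oplus V,\g\oplus V)$ together with the (automatically zero) $\Hom(\wedge^0 V,\g)$-component being absent, matching the convention $\frkC^1(T)=0$.

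The main obstacle, and the only genuinely non-formal point, is bookkeeping: verifying that the explicit formula \eqref{cohomology-of-RB} for $\huaD$ really is the component-by-component expansion of $l_1^{(s^{-1}\pi,T)}$ under the twisting, including that all higher terms $l_k^{(s^{-1}\pi,T)}$ with $k\ge 2$ do not contribute to the \emph{linear} part and that the bidegree truncations (Lemma \ref{important-lemma-2} and Lemma \ref{Zero-condition-2}) kill all but finitely many brackets $\underbrace{[\cdots[f,T]_{\NR},\cdots,T]_{\NR}}$, leaving only the single surviving term of bidegree $-1|n$ as written. This is exactly the content already worked out in the displayed computation \eqref{eq:l1muT} inside the proof of Theorem \ref{L-infinity-ex}, so the proof here is short: state the identification $\huaD=(-1)^{n-2}l_1^{(s^{-1}\pi,T)}$ via \eqref{eq:l1muT}, invoke Lemma \ref{lem:complex}, and conclude $\huaD\circ\huaD=0$. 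I would write it in roughly four sentences, citing \eqref{eq:l1muT}, Lemma \ref{important-lemma-2}, Lemma \ref{Zero-condition-2}, and Lemma \ref{lem:complex}.
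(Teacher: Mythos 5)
Your proposal is correct and takes essentially the same route as the paper's own proof: the paper likewise observes that $\huaD(f,\theta)=(-1)^{n-2}\,l_1^{(s^{-1}\pi,T)}(s^{-1}f,\theta)$ (the computation \eqref{eq:l1muT}) and then concludes $\huaD\circ\huaD=0$ from Lemma \ref{lem:complex}. The only quibble is a harmless index slip in your sign check: the second application of $\huaD$ acts on $\frkC^{n+1}(\g,\rho,T)$, so its prefactor is $(-1)^{(n+1)-2}$ rather than $(-1)^{(n-1)-2}$, but in either case the product is a nonzero scalar and the conclusion is unaffected.
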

\begin{proof}
For any $(f,\theta)\in\frkC^n(\g,\rho,T)$, we have $(s^{-1}f,\theta)\in (s^{-1}L'\oplus \h)^{n-2}$. By \eqref{cohomology-of-RB}, we deduce that
$$
\huaD(f,\theta)=(-1)^{n-2} l_1^{(s^{-1}\pi,T)}(s^{-1}f,\theta).
$$
By Lemma \ref{lem:complex}, we obtain that $(\oplus _{n=0}^{+\infty}\frkC^n(\g,\rho,T),\huaD)$ is a cochain complex.
\end{proof}

\begin{defi}
  The cohomology of the cochain complex $(\oplus _{n=0}^{+\infty}\frkC^n(\g,\rho,T),\huaD)$ is called the {\em cohomology  of the relative Rota-Baxter Lie algebra} $((\g,\mu),\rho,T)$. We denote its $n$-th cohomology group by $\huaH^n(\g,\rho,T)$
\end{defi}

Define a linear operator $h_T:\frkC^n(\g,\rho)\lon \frkC^{n+1}(T)$ by
\begin{eqnarray}\label{key-cohomology-T-abstract}
h_Tf:=(-1)^{n-2} \frac{1}{n!}\underbrace{[\cdots[[}_nf,T]_{\NR},T]_{\NR},\cdots,T]_{\NR}.
\end{eqnarray}
By \eqref{cohomology-of-RB} and \eqref{key-cohomology-T-abstract}, the coboundary operator can be written as
\begin{equation}\label{eq:Dexplicit}
\huaD(f,\theta)=(\partial f,\delta \theta+h_Tf),
\end{equation}
where $\partial $ is given by \eqref{defi:coboundary-rep},
and $\delta $ is given by   \eqref{defi:coboundary-O}. More precisely,
\begin{eqnarray}
&& (\delta \theta)(v_1,\cdots,v_{n})\notag\\
&=&\sum_{i=1}^{n}(-1)^{i+1}[Tv_i,\theta(v_1,\cdots,\hat{v}_i,\cdots, v_{n})]_\g+\sum_{i=1}^{n}(-1)^{i+1}T\rho(\theta(v_1,\cdots,\hat{v}_i,\cdots, v_{n}))(v_i)\label{eq:odiff}\\
&&+\sum_{1\le i<j\le n}(-1)^{i+j}\theta(\rho(Tv_i)(v_j)-\rho(Tv_j)(v_i),v_1,\cdots,\hat{v}_i,\cdots,\hat{v}_j,\cdots, v_{n}). \notag
\end{eqnarray}

Now we give the  formulas for $h_T$ in terms of multilinear maps.
\begin{lem}The operator $h_T:\Hom(\wedge^n\g,\g)\oplus \Hom(\wedge^{n-1}\g\otimes V,V)\lon\Hom(\wedge^{n}V,\g)$ is given by
\begin{eqnarray}\label{key-cohomology-T}
&&\nonumber( h_Tf)(v_1,\cdots,v_n)\\
&=&(-1)^{n}f_\g(Tv_1,\cdots,Tv_n)+\sum_{i=1}^{n}(-1)^{i+1}Tf_V\big(Tv_1,\cdots,Tv_{i-1},Tv_{i+1},\cdots,Tv_n,v_i\big),
\end{eqnarray}
where $f=(f_\g,f_V),$ and $f_\g\in \Hom(\wedge^n\g,\g),~f_V\in \Hom(\wedge^{n-1}\g\otimes V,V)$ and $v_1,\cdots,v_n\in V.$

\end{lem}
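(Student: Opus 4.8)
The statement to prove is the explicit multilinear formula \eqref{key-cohomology-T} for the operator $h_T$ defined abstractly by \eqref{key-cohomology-T-abstract}, namely
$$
h_Tf=(-1)^{n-2}\tfrac{1}{n!}\underbrace{[\cdots[[}_n f,T]_{\NR},T]_{\NR},\cdots,T]_{\NR},
$$
for $f=(f_\g,f_V)\in C^{(n-1)|0}(\g\oplus V,\g\oplus V)$, where $T\in C^{-1|1}(\g\oplus V,\g\oplus V)=\Hom(V,\g)$. The plan is a direct induction on the number of brackets, combined with bookkeeping of bidegrees. First I would set $f_0:=\hat f$ (the lift of $f$, so $f_0\in C^{(n-1)|0}$) and define inductively $f_j:=[f_{j-1},\hat T]_{\NR}\in C^{(n-1-j)|j}$ — the bidegree following from Lemma~\ref{important-lemma-2}, since $\|\hat T\|=(-1)|1$ — and then compute $f_n$ evaluated on $n$ arguments in $V$. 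By the bidegree constraint, $f_j$ applied to $(v_1,\dots,v_{j+1})\in\wedge^{j+1}V$ takes values in $\g$ (the first slot's worth of arguments being exhausted when all inputs come from $\g_2=V$), so it is governed by a well-defined map $\wedge^{j+1}V\to\g$; I want to track that map.

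The key computational step is to unwind one application of $[\,\cdot\,,\hat T]_{\NR}$ using the definition \eqref{NR-bracket-com} of the Nijenhuis-Richardson circle product, specialized to the case where the inner argument is $\hat T\in C^0$. Writing out $[g,\hat T]_{\NR}=g\bar\circ\hat T-(-1)^{|g|}\hat T\bar\circ g$ on elements of $\wedge^{k+1}(\g\oplus V)$ of the appropriate type, the first term inserts $T$ applied to a single $V$-argument back into one slot of $g$ while the remaining $V$-arguments are fed to the other slots, and the second term applies $T$ to the output of $g$ (which is then, by the $(-1|1)$-type of $\hat T$, zero unless $g$'s output already lands in $V$, i.e.\ comes from the $f_V$ component). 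I would carefully verify the signs: each NR-shuffle contributes a Koszul sign, and the $(-1)^{n-2}$ and $\tfrac{1}{n!}$ prefactors must be matched against the number of ways the $n$ copies of $T$ can be distributed among the $n$ arguments $v_1,\dots,v_n$ — which is exactly $n!$ when we commit to feeding $Tv_{\sigma(1)},\dots,Tv_{\sigma(n)}$ to $f_\g$, explaining how the $\tfrac1{n!}$ cancels. The two surviving terms after full expansion are precisely $(-1)^n f_\g(Tv_1,\dots,Tv_n)$ (from repeatedly using the "insert $T$ into a $\g$-slot" branch on the $f_\g$ component) and the alternating sum $\sum_i (-1)^{i+1} T f_V(Tv_1,\dots,\widehat{Tv_i},\dots,Tv_n,v_i)$ (from using the "apply $T$ to the output" branch once, on the $f_V$ component, and feeding the left-over $v_i$ raw).

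The main obstacle I anticipate is purely the sign/combinatorial bookkeeping: matching the global sign $(-1)^{n-2}$, the Koszul signs $\varepsilon(\sigma)$ accumulated across $n$ nested NR-brackets, and the degree-shift signs (each $f_j$ has degree one higher as an NR-element) so that the final answer collapses to the clean signs $(-1)^n$ and $(-1)^{i+1}$. A clean way to organize this is to prove by induction the formula for $f_j$ on $(v_1,\dots,v_{j+1})$: it should be (up to a $j$-dependent sign and a $\tfrac{1}{j!}$-type normalization that I would fix by a small base case, e.g.\ $j=1$) the sum of $f_\g(Tv_1,\dots)$-type terms with one leftover argument not yet hit by $T$, plus $Tf_V$-type terms; then the inductive step is a single application of the one-bracket computation above, and the $n$-th stage gives the claim. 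Alternatively — and perhaps more transparently — one can use Remark~\ref{NR-coder} and realize $[\,\cdot\,,\hat T]_{\NR}$ as (graded) commutator with the coderivation $\hat T$ on $\bar{\Sym}^c(s^{-1}(\g\oplus V))$, where iterating becomes "$T$ acts by a locally nilpotent coderivation, and the $n$-fold bracket is its $n$-th power against $f$", which makes the $\tfrac1{n!}$ and the exponential-type combinatorics manifest; I would likely present the hands-on induction as the primary argument since the paper has set up the NR-formulas explicitly, and the verification is then just "expand, collect, check signs."
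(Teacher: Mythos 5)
Your proposed primary route (a hands-on induction on the nested brackets $f_j=[f_{j-1},\hat T]_{\NR}$, using the shuffle formula \eqref{NR-bracket-com} and the bidegree bookkeeping of Lemma \ref{important-lemma-2}) is sound and would prove the lemma, but it is not the route the paper takes: the paper's proof is exactly your ``alternative'' — it passes to coderivations of $\bar{\Sym}^c(s^{-1}(\g\oplus V))$ via Remark \ref{NR-coder} and expands the $n$-fold bracket as $\sum_{i=0}^{n}(-1)^{i}\binom{n}{i}\,\bar T^{\,i}\circ(\bar f_\g+\bar f_V)\circ\bar T^{\,n-i}$, after which only the summands $i=0$ (all $n$ inputs converted to $\g$, fed to $f_\g$, coefficient $n!$ from the orderings) and $i=1$ (one raw $V$-input fed to $f_V$, then $T$ applied to the output, coefficient $n\cdot(n-1)!$) survive, the factor $n!$ cancels the $\tfrac1{n!}$, and the prefactor $(-1)^{n-2}$ yields the signs $(-1)^n$ and $(-1)^{i+1}$. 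One small correction to how you phrase that alternative: the $n$-fold bracket is not ``the $n$-th power of $\bar T$ against $f$'' but the $n$-th power of the adjoint operator $[\,\cdot\,,\bar T]$, i.e.\ the alternating binomial sum above; it is precisely this expansion, together with $T(\g)=0$ and the fact that $f_V$ accepts exactly one $V$-argument, that kills all terms with $i\geq 2$. If you do carry out your bracket-by-bracket induction instead, note that the inductive hypothesis must describe \emph{both} bidegree components of $f_j$ (its values on $\g^{n-j,j}$ and on $\g^{n-j-1,j+1}$, i.e.\ on mixed $\g$/$V$ arguments), since evaluating $f_n$ on $\wedge^nV$ requires $f_{n-1}$ on inputs containing a $\g$-entry; with that strengthened hypothesis the factor $n!$ accumulates across the $n$ stages exactly as you predict (e.g.\ already $[[f,T]_{\NR},T]_{\NR}(v_1,v_2)=2\big(f_\g(Tv_1,Tv_2)+Tf_V(Tv_2,v_1)-Tf_V(Tv_1,v_2)\big)$), and the two computations agree. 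The coderivation argument buys you the binomial combinatorics for free; your elementary induction avoids invoking the coalgebraic interpretation but costs a more elaborate inductive statement and sign audit.
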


\begin{proof}
  By Remark \ref{NR-coder}, it is convenient to view the elements of $\oplus_{n=0}^{+\infty}C^{n}(\g\oplus V;\g\oplus V)$ as coderivations of $\bar{\Sym}^c\big(s^{-1}(\g\oplus V)\big)$. The coderivations corresponding to $f$ and $T$ will be denoted by $\bar{f}$ and $\bar{T}$ respectively.
 Then, by induction, we have
\begin{eqnarray*}
&&\underbrace{[\cdots[[}_nf,T]_{\NR},T]_{\NR},\cdots,T]_{\NR}\big((x_1,v_1),\cdots,(x_{n},v_{n})\big)\\
&=&\sum_{i=0}^{n}(-1)^{i}{n\choose i}\big(\underbrace{\bar{T}\circ\cdots\circ\bar{T}}_i\circ (\bar{f}_\g+\bar{f}_V)\circ \underbrace{\bar{T}\cdots\circ \bar{T}}_{n-i}\big)\big((x_1,v_1),\cdots,(x_{n},v_{n})\big)\\
&=&\big(n!f_\g(Tv_1,\cdots,Tv_n),0\big)\\
&&+\Big((-1)n\sum_{k=1}^{n}(-1)^{n-i}(n-1)!Tf_V\big(Tv_1,\cdots,Tv_{i-1},Tv_{i+1},\cdots,Tv_n,v_i\big),0\Big)\\
&=&\big(n!f_\g(Tv_1,\cdots,Tv_n),0\big)+\Big(n!\sum_{k=1}^{n}(-1)^{n-i+1}Tf_V\big(Tv_1,\cdots,Tv_{i-1},Tv_{i+1},\cdots,Tv_n,v_i\big),0\Big),
\end{eqnarray*}
which implies that \eqref{key-cohomology-T} holds.
\end{proof}

The formula of the coboundary operator $\huaD$ can be well-explained by the following diagram:
 \[
\small{ \xymatrix{
\cdots
\longrightarrow \frkC^n(\g,\rho)\ar[dr]^{h_T} \ar[r]^{\qquad\partial} & \frkC^{n+1}(\g,\rho) \ar[dr]^{h_T} \ar[r]^{\partial\qquad}  & \frkC^{n+2}(\g,\rho)\longrightarrow\cdots  \\
\cdots\longrightarrow \frkC^n(T) \ar[r]^{\qquad\delta} &\frkC^{n+1}(T)\ar[r]^{\delta\qquad}&\frkC^{n+2}(T)\longrightarrow \cdots.}
}
\]
\begin{thm}\label{cohomology-exact}
Let $((\g,\mu),\rho,T)$ be a relative Rota-Baxter Lie algebra. Then there is a short exact sequence of the  cochain complexes:
$$
0\longrightarrow(\oplus_{n=0}^{+\infty}\frkC^n(T),\delta)\stackrel{\iota}{\longrightarrow}(\oplus _{n=0}^{+\infty}\frkC^n(\g,\rho,T),\huaD)\stackrel{p}{\longrightarrow} (\oplus_{n=0}^{+\infty}\frkC^n(\g,\rho),\partial)\longrightarrow 0,
$$
where $\iota$ and $p$ are the inclusion map and the projection map.

Consequently, there is a long exact sequence of the  cohomology groups:
$$
\cdots\longrightarrow\huaH^n(T)\stackrel{\huaH^n(\iota)}{\longrightarrow}\huaH^n(\g,\rho,T)\stackrel{\huaH^n(p)}{\longrightarrow} \huaH^n(\g,\rho)\stackrel{c^n}\longrightarrow \huaH^{n+1}(T)\longrightarrow\cdots,
$$
where the connecting map $c^n$ is defined by
$
c^n([\alpha])=[h_T\alpha],$  for all $[\alpha]\in \huaH^n(\g,\rho).$
\end{thm}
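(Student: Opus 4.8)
The plan is to first exhibit the short exact sequence of cochain complexes directly from the formula \eqref{eq:Dexplicit} for the coboundary operator $\huaD$, and then invoke the standard zig-zag (snake) lemma to obtain the long exact sequence, identifying the connecting homomorphism along the way. The key structural observation is that, by definition, $\frkC^n(\g,\rho,T) = \frkC^n(T) \oplus \frkC^n(\g,\rho)$ as graded vector spaces, and under this decomposition \eqref{eq:Dexplicit} says $\huaD(f,\theta) = (\partial f, \delta\theta + h_T f)$ — i.e. $\huaD$ is ``upper triangular'' with diagonal entries $\partial$ and $\delta$ and off-diagonal term $h_T$. This makes $\iota(\theta) = (0,\theta)$ a chain map into the $\frkC^\bullet(T)$-summand and $p(f,\theta) = f$ a chain map onto the quotient $\frkC^\bullet(\g,\rho)$, with $p\circ\iota = 0$, $\iota$ injective and $p$ surjective and $\ker p = \Img \iota$; these are immediate from the block form. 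That $\iota$ and $p$ commute with the differentials is exactly the content of the two triangles in the displayed diagram, i.e. $\partial\circ p = p\circ\huaD$ and $\huaD\circ\iota = \iota\circ\delta$, both read off from \eqref{eq:Dexplicit}. (One should note that $h_T$ itself is \emph{not} a chain map; it is a degree-$(+1)$ map that is only needed to witness that $\huaD^2=0$ splits into $\partial^2=0$, $\delta^2=0$ and the crucial mixed identity $\delta\circ h_T + h_T\circ\partial = 0$, which is what makes the connecting map well defined below. This mixed identity is itself already guaranteed, since $\huaD^2=0$ by Theorem \ref{cohomology-of-relative-RB} and, comparing block components of $\huaD^2(f,\theta)=0$, the $\frkC(T)$-component gives precisely $\delta\delta\theta + \delta h_T f + h_T\partial f = 0$, hence $\delta h_T + h_T\partial = 0$.)

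Having established the short exact sequence of complexes, the long exact sequence of cohomology groups is formal. It remains only to identify the connecting map $c^n\colon \huaH^n(\g,\rho)\to\huaH^{n+1}(T)$. I would do this by chasing the snake-lemma diagram: given a class $[\alpha]\in\huaH^n(\g,\rho)$ with $\partial\alpha = 0$, lift $\alpha$ to $(\alpha,0)\in\frkC^n(\g,\rho,T)$ via a set-theoretic section of $p$; apply $\huaD$ to get $\huaD(\alpha,0) = (\partial\alpha,\, h_T\alpha) = (0,\, h_T\alpha)$ (up to the sign $(-1)^{n-2}$, which is absorbed into the choice of sign convention for $\huaD$ and is immaterial for the cohomology class); since the first component vanishes, $(0,h_T\alpha)$ is in the image of $\iota$, and pulling back along $\iota$ gives the cocycle $h_T\alpha\in\frkC^{n+1}(T)$. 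Its class $[h_T\alpha]\in\huaH^{n+1}(T)$ is by definition $c^n([\alpha])$, which is exactly the asserted formula. One should check that $h_T\alpha$ is indeed a $\delta$-cocycle — but that is the identity $\delta h_T\alpha = -h_T\partial\alpha = 0$ noted above — and that the class is independent of representative, which again follows from $\delta h_T + h_T\partial = 0$ together with $h_T$ being linear.

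I do not anticipate a serious obstacle here: once the block-triangular form \eqref{eq:Dexplicit} of $\huaD$ is in hand, everything reduces to the standard homological algebra of a short exact sequence of complexes, and the paper has already done the genuinely hard work (constructing the $L_\infty$-algebra, twisting it, and verifying $\huaD^2=0$) in the preceding sections. The only point requiring a modicum of care is bookkeeping of the signs $(-1)^{n-2}$ attached to $\huaD$, $\delta$ and $h_T$ in \eqref{defi:coboundary-rep}, \eqref{defi:coboundary-O} and \eqref{key-cohomology-T-abstract}: one must confirm that these are chosen consistently so that $\iota$ and $p$ are strict chain maps on the nose (not merely up to sign) and so that the mixed identity $\delta h_T + h_T\partial = 0$ holds with the correct sign — but this is precisely parallel to the sign analysis already carried out in the proof of Theorem \ref{L-infinity-ex}, where the very same maps appear as components of $l_1^{(s^{-1}\pi,T)}$ and $l_2^{(s^{-1}\pi,T)}$, so it can be imported essentially verbatim. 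In fact the cleanest route is to observe that the present short exact sequence of complexes is nothing but the degree-$1$ part (the $l_1$-part) of the strict short exact sequence of $L_\infty$-algebras in Theorem \ref{L-infinity-ex}, so the chain-map property of $\iota$ and $p$ is already established there and needs no re-proof.
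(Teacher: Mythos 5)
Your proposal is correct and follows essentially the same route as the paper: the paper also reads the short exact sequence of complexes directly off the block-triangular formula $\huaD(f,\theta)=(\partial f,\delta\theta+h_Tf)$ in \eqref{eq:Dexplicit} and identifies the connecting map as $c^n([\alpha])=[h_T\alpha]$ by the standard chase, merely stating this in one line where you spell out the details (the chain-map checks, the identity $\delta h_T+h_T\partial=0$ from $\huaD^2=0$, and the snake-lemma argument). Note only that no extra sign bookkeeping is needed: the factors $(-1)^{n-2}$ are already built into the definitions of $\partial$, $\delta$ and $h_T$, so $\huaD(\alpha,0)=(\partial\alpha,h_T\alpha)$ holds on the nose.
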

\begin{proof}
 By  \eqref{eq:Dexplicit}, we have the short exact sequence  of chain complexes which induces a long exact sequence of cohomology groups.   Also by \eqref{eq:Dexplicit},   $c^n$ is given by
$
c^n([\alpha])=[h_T\alpha].$
\end{proof}

\subsection{Infinitesimal deformations of relative Rota-Baxter Lie algebras}\label{sec:defRRB}

In this subsection, we introduce the notion of $\R$-deformations of relative Rota-Baxter Lie algebras, where $\R$ is a local pro-Artinian $\K$-algebra. Since $\R$ is the projective limit of local Artinian $\K$-algebras, $\R$ is equipped with an augmentation $\epsilon:\R\lon \K$. See \cite{Doubek-Markl-Zima,KSo} for more details about $\R$-deformation theory of algebraic structures. Then we restrict our study to infinitesimal deformations, i.e. $\R=\K[t]/(t^{2})$,    using the cohomology theory introduced in Section \ref{sec:cohomologyRRB}.

Replacing the $\K$-vector spaces and $\K$-linear maps by $\R$-modules and $\R$-linear maps in Definition \ref{defi:O} and Definition \ref{defi:homoRRB}, it is straightforward to obtain the definitions of $\R$-relative Rota-Baxter Lie algebras and homomorphisms between them.

 Any relative Rota-Baxter Lie algebra $((\g,[\cdot,\cdot]_\g),\rho,T)$ can be viewed as an $\R$-relative Rota-Baxter Lie algebra    with the help of the augmentation map $\epsilon$. More precisely, the $\R$-module structure on $\g$ and $V$ are given by
 $$
 r\cdot x:=\epsilon(r)x,\quad r\cdot u:=\epsilon(r)u,\quad \forall r\in\R,~x\in\g,~u\in V.
 $$

\begin{defi}
An {\em $\R$-deformation} of a relative Rota-Baxter Lie algebra $((\g,[\cdot,\cdot]_\g),\rho,T)$ consists of an $\R$-Lie algebra structure $[\cdot,\cdot]_\R$ on the tensor product $\R\otimes_\K\g$, an $\R$-Lie algebra  homomorphism $\rho_\R:\R\otimes_\K\g\lon\gl_\R(\R\otimes_\K V)$ and  an $\R$-linear map $T_\R:\R\otimes_\K V\lon\R\otimes_\K\g$, which is a relative Rota-Baxter operator
such that $(\epsilon\otimes_\K\Id_\g,\epsilon\otimes_\K\Id_V)$ is an $\R$-relative Rota-Baxter Lie algebra homomorphism from $((\R\otimes_\K\g,[\cdot,\cdot]_\R),\rho_\R,T_\R)$ to $((\g,[\cdot,\cdot]_\g),\rho,T)$.
\end{defi}
Thereafter, we denote an $\R$-deformation of  $((\g,[\cdot,\cdot]_\g),\rho,T)$ by a triple $((\R\otimes_\K\g,[\cdot,\cdot]_\R),\rho_\R,T_\R)$.
Next we discuss   equivalences between   $\R$-deformations.
\begin{defi}
Let $((\R\otimes_\K\g,[\cdot,\cdot]_\R),\rho_\R,T_\R)$ and $((\R\otimes_\K\g,[\cdot,\cdot]'_\R),\rho'_\R,T'_\R)$ be two $\R$-deformations of a relative Rota-Baxter Lie algebra $((\g,[\cdot,\cdot]_\g),\rho,T)$. We call them {\em equivalent} if there exists an $\R$-relative Rota-Baxter Lie algebra isomorphism $(\phi,\varphi):((\R\otimes_\K\g,[\cdot,\cdot]'_\R),\rho'_\R,T'_\R)\lon((\R\otimes_\K\g,[\cdot,\cdot]_\R),\rho_\R,T_\R)$
such that
\begin{eqnarray}\label{equivalent-deformation-2}
(\epsilon\otimes_\K\Id_\g,\epsilon\otimes_\K\Id_V)=(\epsilon\otimes_\K\Id_\g,\epsilon\otimes_\K\Id_V)\circ(\phi,\varphi).
\end{eqnarray}
\end{defi}

\begin{defi}
   A $\K[t]/(t^{2})$-deformation of the relative Rota-Baxter Lie algebra $((\g,[\cdot,\cdot]_\g),\rho,T)$ is call an  {\em infinitesimal deformation}.
\end{defi}
Let $\R=\K[t]/(t^{2})$ and $((\R\otimes_\K\g,[\cdot,\cdot]_\R),\rho_\R,T_\R)$ be an infinitesimal deformation of $((\g,[\cdot,\cdot]_\g),\rho,T)$. Since $((\R\otimes_\K\g,[\cdot,\cdot]_\R),\rho_\R,T_\R)$ is an $\R$-relative Rota-Baxter Lie algebra, there exist $\omega_0,~\omega_1\in\Hom(\g\wedge\g,\g)$,  $\varrho_0,~\varrho_1\in\gl(V)$ and $\huaT_0,~\huaT_1\in\Hom(V,\g)$ such that
\begin{eqnarray}\label{infinitesimal-deformation}
[\cdot,\cdot]_\R=\omega_0+t\omega_1,\quad \rho_\R=\varrho_0+t\varrho_1,\quad T_\R=\huaT_0+t\huaT_1.
\end{eqnarray}
 Since $(\epsilon\otimes_\K\Id_\g,\epsilon\otimes_\K\Id_V)$ is an $\R$-relative Rota-Baxter Lie algebra homomorphism from $((\R\otimes_\K\g,[\cdot,\cdot]_\R),\rho_\R,T_\R)$ to $((\g,[\cdot,\cdot]_\g),\rho,T)$, we deduce that $$\omega_0=[\cdot,\cdot]_\g,\quad \varrho_0=\rho,\quad \huaT_0=T.$$ Therefore,   an infinitesimal deformation of $((\g,[\cdot,\cdot]_\g),\rho,T)$ is determined by the triple $(\omega_1,\varrho_1,\huaT_1)$.

Now we analyze the conditions on   $(\omega_1,\varrho_1,\huaT_1)$. First by the fact that $(\R\otimes_\K\g,[\cdot,\cdot]_\g+t\omega_1)$ is an $\R$-Lie algebra, we get
  \begin{eqnarray}
   \label{eq:alg1} \dM_\CE \omega_1&=&0.
  \end{eqnarray}
   Then since $(\R\otimes_\K V;\rho+t\varrho_1)$ is a representation of    $(\R\otimes_\K\g,[\cdot,\cdot]_\g+t\omega_1)$, we obtain
   \begin{eqnarray}
    \label{eq:rep1} \rho(\omega_1(x,y))+\varrho([x,y]_\g)&=&[\rho(x),\varrho(y)]+[\varrho(x),\rho(y)].
   \end{eqnarray}
   Finally by the fact that $T+t\huaT_1$ is  an $\R$-linear relative Rota-Baxter operator on the $\R$-Lie algebra $(\R\otimes_\K\g,[\cdot,\cdot]_\g+t\omega_1)$ with respect to the representation  $(\R\otimes_\K V;\rho+t\varrho_1)$, we obtain
     \begin{eqnarray}
    \label{eq:ro1} [\huaT_1 u,Tv]_\g+[Tu,\huaT_1 v]_\g+\omega_1(Tu,Tv)&=&T\Big(\rho(\huaT_1 u)v-\rho(\huaT_1 v)u+\varrho_1(Tu)v-\varrho_1(Tv)u\Big)\\
   \nonumber  &&+\huaT_1\Big(\rho(Tu)v-\rho(Tv)u\Big).
   \end{eqnarray}

    \begin{pro}
   The triple $(\omega_1,\varrho_1,\huaT_1)$ determines an  infinitesimal  deformation  of the relative Rota-Baxter Lie algebra $((\g,[\cdot,\cdot]_\g),\rho,T)$ if and only if  $(\omega_1,\varrho_1,\huaT_1)$ is a $2$-cocycle of the relative Rota-Baxter Lie algebra $((\g,[\cdot,\cdot]_\g),\rho,T)$.
    \end{pro}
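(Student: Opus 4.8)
The plan is to unwind the definition of a $2$-cocycle of the relative Rota-Baxter Lie algebra $((\g,[\cdot,\cdot]_\g),\rho,T)$ and to check that it reproduces precisely the three conditions \eqref{eq:alg1}, \eqref{eq:rep1} and \eqref{eq:ro1} extracted above. Since the discussion preceding the statement already shows that $(\omega_1,\varrho_1,\huaT_1)$ determines an infinitesimal deformation if and only if these three equations hold, matching them with the $2$-cocycle condition will complete the proof.

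First I would record the identification. Under the isomorphism $\frkC^2(\g,\rho,T)\cong\big(\Hom(\wedge^2\g,\g)\oplus\Hom(\g\otimes V,V)\big)\oplus\Hom(\wedge^1 V,\g)$, the triple $(\omega_1,\varrho_1,\huaT_1)$ corresponds to the cochain $(f,\theta)$ with $f=(f_\g,f_V)=(\omega_1,\varrho_1)$, where $\varrho_1\in\gl(V)$ is regarded as the bilinear map $x\otimes v\mapsto\varrho_1(x)v$, and $\theta=\huaT_1$. By \eqref{eq:Dexplicit} we have $\huaD(f,\theta)=(\partial f,\ \delta\theta+h_Tf)$, so $(\omega_1,\varrho_1,\huaT_1)$ is a $2$-cocycle exactly when $\partial f=0$ and $\delta\theta+h_Tf=0$.

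Next I would expand these two vanishing conditions in the case $n=2$. Writing $\partial f=\big((\partial f)_\g,(\partial f)_V\big)$ via \eqref{cohomology-algebra-rep} and \eqref{cohomology-algebra-rep-V}: the first component is $\dM_\CE\omega_1$, whose vanishing is \eqref{eq:alg1}, while the second component, after collecting the terms involving $\rho$ and those involving $f_V=\varrho_1$, is exactly the difference of the two sides of \eqref{eq:rep1}. Likewise, expanding $\delta\theta$ via \eqref{eq:odiff} and $h_Tf$ via \eqref{key-cohomology-T} with $n=2$, and then rearranging the resulting identity in $\Hom(\wedge^2V,\g)$ by moving the bracket terms $[\huaT_1u,Tv]_\g+[Tu,\huaT_1v]_\g$ and the term $\omega_1(Tu,Tv)$ to one side, yields precisely \eqref{eq:ro1}. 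Combining the three equivalences with the discussion above finishes the argument.

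The only real work is the index and sign bookkeeping in the last step; there is no conceptual obstacle, as the formulas for $\partial$, $\delta$ and $h_T$ have already been made fully explicit. The one point that needs a little care is keeping track of the identification between a representation-type cochain $f_V\in\Hom(\g\otimes V,V)$ and the infinitesimal-deformation datum $\varrho_1\in\gl(V)$, so that the commutator combination $[\rho(x),\varrho_1(y)]+[\varrho_1(x),\rho(y)]$ in \eqref{eq:rep1} is correctly matched with the $\rho$- and $f_V$-terms in \eqref{cohomology-algebra-rep-V}; the overall signs are irrelevant since in each case the condition is the vanishing of the relevant expression.
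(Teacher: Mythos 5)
Your proposal is correct and follows exactly the paper's route: the paper's own (one-line) proof likewise observes that the vanishing of $\huaD(\omega_1,\varrho_1,\huaT_1)=(\partial(\omega_1,\varrho_1),\ \delta\huaT_1+h_T(\omega_1,\varrho_1))$ is precisely the conjunction of \eqref{eq:alg1}, \eqref{eq:rep1} and \eqref{eq:ro1}, which the preceding discussion identified with the conditions for an infinitesimal deformation. You simply carry out the component-by-component verification (via \eqref{cohomology-algebra-rep-V}, \eqref{eq:odiff} and \eqref{key-cohomology-T}) in more detail than the paper does, and that bookkeeping does check out.
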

    \begin{proof}
      By \eqref{eq:alg1}, \eqref{eq:rep1} and \eqref{eq:ro1},   $(\omega_1,\varrho_1,\huaT_1)$ is a $2$-cocycle if and only if $(\omega_1,\varrho_1,\huaT_1)$ determines an  infinitesimal  deformation  of the relative Rota-Baxter Lie algebra $((\g,[\cdot,\cdot]_\g),\rho,T)$.
    \end{proof}

If two infinitesimal deformations   determined by $(\omega_1,\varrho_1,\huaT_1)$ and $(\omega_1',\varrho_1',\huaT_1')$ are equivalent, then there exists an $\R$-relative Rota-Baxter Lie algebra isomorphism $(\phi,\varphi)$ from $((\R\otimes_\K\g,[\cdot,\cdot]_\g+t\omega_1'),\rho+t\varrho_1',T+t\huaT_1')$ to $((\R\otimes_\K\g,[\cdot,\cdot]_\g+t\omega_1),\rho+t\varrho_1,T+t\huaT_1)$. By \eqref{equivalent-deformation-2}, we deduce that
   \begin{eqnarray}
   \phi=\Id_\g+tN,\quad \varphi=\Id_V+tS,\quad \mbox{where}\quad N\in\gl(\g),~  S\in\gl(V).
   \end{eqnarray}

Since $\Id_\g+tN$ is an  isomorphism from $(\R\otimes_\K\g,[\cdot,\cdot]_\g+t\omega_1')$ to $(\R\otimes_\K\g,[\cdot,\cdot]_\g+t\omega_1)$, we get
    \begin{equation}
      \label{eq:equmor1} \omega_1'-\omega_1=\dM_\CE N.
    \end{equation}
  By the equality $(\Id_V+tS)(\rho+t\varrho_1')(y)u=(\rho+t\varrho_1)\big((\Id_\g+tN)y)(\Id_V+tS)u$, we deduce that
     \begin{equation}
      \label{eq:equmor2} \varrho_1'(y)u- \varrho_1(y)u= \rho(Ny)u+ \rho(y)Su- S\rho(y)u,\quad \forall y\in\g, u\in V.
    \end{equation}
    By the equality $(\Id_\g+tN)\circ (T+t\huaT_1')=(T+t\huaT_1)\circ (\Id_V+tS)$, we obtain
    \begin{equation}
     \label{eq:equmor3}  \huaT_1'-\huaT_1=-N\circ T+T\circ S.
    \end{equation}

\begin{thm}
    There is a one-to-one correspondence between   equivalence classes of infinitesimal deformations of the relative Rota-Baxter Lie algebra $((\g,[\cdot,\cdot]_\g),\rho,T)$ and the second cohomology group $\huaH^2(\g,\rho,T)$.
    \end{thm}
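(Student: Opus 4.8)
The plan is to establish the bijection by explicitly matching the two sides. Given an infinitesimal deformation determined by a triple $(\omega_1,\varrho_1,\huaT_1)$, the preceding proposition shows that this triple is precisely a $2$-cocycle in $\frkC^2(\g,\rho,T)$; conversely, every $2$-cocycle $(\omega_1,\varrho_1,\huaT_1)$ gives, via \eqref{infinitesimal-deformation} with $\omega_0=\mu$, $\varrho_0=\rho$, $\huaT_0=T$, an infinitesimal deformation. So the assignment $(\omega_1,\varrho_1,\huaT_1)\mapsto((\R\otimes_\K\g,\mu+t\omega_1),\rho+t\varrho_1,T+t\huaT_1)$ is a surjection from $2$-cocycles onto infinitesimal deformations, and it is injective by construction. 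It remains to check that this descends to a bijection on the respective equivalence classes, i.e. that two $2$-cocycles differ by a coboundary if and only if the corresponding infinitesimal deformations are equivalent.

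First I would unwind what a $2$-coboundary in $\frkC^2(\g,\rho,T)$ is: applying $\huaD$ as in \eqref{eq:Dexplicit} to a $1$-cochain $(N,S)\in\gl(\g)\oplus\gl(V)=\frkC^1(\g,\rho)$ (note $\frkC^1(T)=0$), one gets $\huaD(N,S)=(\partial(N,S),\,h_T(N,S))$. I would compute the three components of $\partial(N,S)$ using the explicit formulas \eqref{cohomology-algebra-rep} and \eqref{cohomology-algebra-rep-V}, obtaining exactly $\dM_\CE N$ in the $\Hom(\wedge^2\g,\g)$-slot and $\rho(Ny)u+\rho(y)Su-S\rho(y)u$ in the $\Hom(\g\otimes V,V)$-slot; and I would compute $h_T(N,S)$ from \eqref{key-cohomology-T} with $n=1$, getting $-N\circ T+T\circ S$. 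These are precisely the right-hand sides of \eqref{eq:equmor1}, \eqref{eq:equmor2}, \eqref{eq:equmor3}. Thus if two infinitesimal deformations are equivalent, the equivalence $(\phi,\varphi)=(\Id+tN,\Id+tS)$ produces exactly the relations saying $(\omega_1',\varrho_1',\huaT_1')-(\omega_1,\varrho_1,\huaT_1)=\huaD(N,S)$, so the cocycles are cohomologous.

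For the converse, given a $1$-cochain $(N,S)$, I would set $\phi=\Id_\g+tN$ and $\varphi=\Id_V+tS$ (these are automatically $\R$-linear isomorphisms over $\R=\K[t]/(t^2)$ since $t^2=0$), verify that the conditions \eqref{defi:isocon1}, \eqref{defi:isocon2} for a homomorphism of $\R$-relative Rota-Baxter Lie algebras between the two deformed structures reduce exactly to \eqref{eq:equmor1}--\eqref{eq:equmor3}, and that the normalization \eqref{equivalent-deformation-2} holds because the linear-in-$t$ part is killed by $\epsilon$. Hence cohomologous cocycles give equivalent deformations. Combining the two directions yields the claimed one-to-one correspondence with $\huaH^2(\g,\rho,T)$.

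I expect the only real work to be the bookkeeping in identifying $\huaD(N,S)$ componentwise with the triple of right-hand sides in \eqref{eq:equmor1}--\eqref{eq:equmor3}; this is a direct but sign-sensitive computation using \eqref{cohomology-algebra-rep-V} and \eqref{key-cohomology-T}, and getting the Koszul/degree signs to line up (the $(-1)^{n-2}$ in \eqref{cohomology-of-RB} at $n=1$, and the conventions in the definition of $h_T$) is the step most likely to require care. Everything else — that $\Id+tN$ is invertible mod $t^2$, that the augmentation condition is automatic, that the map on cocycles is a bijection before quotienting — is routine.
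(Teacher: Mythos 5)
Your proposal is correct and follows essentially the same route as the paper: identify infinitesimal deformations with $2$-cocycles via the preceding proposition, and match the equivalence relations \eqref{eq:equmor1}--\eqref{eq:equmor3} with the coboundary $\huaD(N,S)$ of a $1$-cochain $(N,S)\in\gl(\g)\oplus\gl(V)$. The paper's proof is terser (it simply invokes those three equations), while you additionally spell out the componentwise computation of $\huaD(N,S)$ via \eqref{cohomology-algebra-rep-V} and \eqref{key-cohomology-T} and the converse direction that cohomologous cocycles yield equivalent deformations; this is extra detail, not a different method.
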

   \begin{proof} By \eqref{eq:equmor1}, \eqref{eq:equmor2} and \eqref{eq:equmor3}, we deduce that
    $$
    (\omega_1',\varrho_1',\huaT_1')-(\omega_1,\varrho_1,\huaT_1)=\huaD(N,S),
    $$
    which implies that $(\omega_1,\varrho_1,\huaT_1)$ and $(\omega_1',\varrho_1',\huaT_1')$ are in the same cohomology class if and only if the corresponding infinitesimal deformations of $((\g,[\cdot,\cdot]_\g),\rho,T)$ are equivalent.
    \end{proof}

\begin{rmk}
One can study deformations of relative Rota-Baxter Lie algebras over more general bases such as $\R=\K[t]/(t^{n})$, $\R=\K[[t]]=\varprojlim_n \K[t]/(t^{n})$ or indeed over differential graded local pro-Artinian rings.
\end{rmk}

\subsection{Cohomology of Rota-Baxter Lie algebras}\label{sec:cohomologyRB}

In this subsection,  we define the cohomology of   Rota-Baxter Lie algebras with the help of the general framework of the cohomology of relative Rota-Baxter Lie algebras.

Let $(\g,[\cdot,\cdot]_\g,T)$ be a Rota-Baxter Lie algebra. We define the set of $0$-cochains $\frkC^0_{\RB}(\g,T)$ to be $0$, and define
the set of $1$-cochains $\frkC^1_{\RB}(\g,T)$ to be $\frkC^1_{\RB}(\g,T):=\Hom( \g,\g)$. For $n\geq2$, define the space of   $n$-cochains $\frkC^n_{\RB}(\g,T)$ by
\begin{eqnarray*}
\frkC^n_{\RB}(\g,T):=\frkC^n_\Lie(\g;\g)\oplus \frkC^n(T) =\Hom(\wedge^n\g,\g)\oplus\Hom(\wedge^{n-1}\g,\g).
\end{eqnarray*}

Define the embedding $\frki:\frkC^n_{\RB}(\g,T)\lon \frkC^n(\g,\ad,T)$ by
$$
\frki(f,\theta)=(f,f,\theta),\quad \forall f\in \Hom(\wedge^n\g,\g), \theta\in\Hom(\wedge^{n-1}\g,\g).
$$
Denote by $\Img^n(\frki)=\frki(\frkC^n_{\RB}(\g,T))$. Then we have

\begin{pro}\label{sub-complex-RB-cohomology}
With the above notation,  $(\oplus_{n=0}^{+\infty}\Img^n(\frki),\huaD)$ is a subcomplex of the cochain complex $(\oplus _{n=0}^{+\infty}\frkC^n(\g,\ad,T),\huaD)$ associated to the relative Rota-Baxter Lie algebra $((\g,[\cdot,\cdot]_\g),\ad,T)$.
\end{pro}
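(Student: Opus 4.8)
The plan is to show that the image $\bigoplus_{n\geq 0}\Img^n(\frki)$ is closed under the coboundary operator $\huaD$ of the relative Rota-Baxter Lie algebra $((\g,[\cdot,\cdot]_\g),\ad,T)$; since $\huaD\circ\huaD=0$ on the ambient complex by Theorem \ref{cohomology-of-relative-RB}, this immediately yields that $(\bigoplus_{n\geq 0}\Img^n(\frki),\huaD)$ is a subcomplex. Concretely, fixing $(f,\theta)\in\frkC^n_{\RB}(\g,T)$ with $f\in\Hom(\wedge^n\g,\g)$ and $\theta\in\Hom(\wedge^{n-1}\g,\g)$, so that $\frki(f,\theta)=(f,f,\theta)$, I must produce $(f',\theta')\in\frkC^{n+1}_{\RB}(\g,T)$ with $\huaD(f,f,\theta)=(f',f',\theta')=\frki(f',\theta')$. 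By the explicit formula \eqref{eq:Dexplicit}, $\huaD(f,f,\theta)=(\partial(f,f),\delta\theta+h_T(f,f))$, so the task splits into two verifications.

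First I would analyze the $\frkC^\bullet(\g,\ad)$-component. Writing $(f,f)$ for the cochain of the \LR pair $(\g,\ad)$ with $f_\g=f$ and $f_V=f$ (both equal, using $V=\g$ and $\rho=\ad$), I need to check that $\partial(f,f)$ again has equal two components, i.e. $(\partial(f,f))_\g=(\partial(f,f))_V$. The first component $(\partial f)_\g=\dM_\CE f$ is just the Chevalley-Eilenberg differential, and one computes $(\partial(f,f))_V$ from \eqref{cohomology-algebra-rep-V} with $\rho=\ad$; the point is that the formula \eqref{cohomology-algebra-rep-V}, specialized to the adjoint representation, becomes literally the Chevalley-Eilenberg coboundary applied to $f$, because $\ad(x)y=[x,y]_\g$ turns every $\rho$-term into a bracket term. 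So $(\partial(f,f))_\g=(\partial(f,f))_V=\dM_\CE f$, and we may set $f'=\dM_\CE f$ up to the sign $(-1)^{n-2}$ already built into $\huaD$.

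Second, I would treat the $\frkC^\bullet(T)$-component: I need $\delta\theta+h_T(f,f)$ to land in $\Hom(\wedge^n\g,\g)$, which it does automatically since $V=\g$, so there is nothing to check for membership---the only content is that it is paired with the \emph{same} map in both slots, which we just established. Thus the real verification is the equality of the two $\frkC(\g,\ad)$-components above. I expect this to be a short direct computation, and it is the only genuine obstacle: one must carefully match the signs and the combinatorics of \eqref{cohomology-algebra-rep-V} against the Chevalley-Eilenberg formula, and confirm that the hat-map conventions (the lift $\hat f$) do not introduce a discrepancy between how $f$ acts as $f_\g$ versus $f_V$. Alternatively, and perhaps more cleanly, one can argue conceptually: the diagonal embedding $\g\hookrightarrow\g\oplus\g=\g\oplus V$, $x\mapsto(x,x)$, is compatible with the bracket $\mu$ and the action $\ad$, so the lift $\hat f$ of a map $f$ with $f_\g=f_V$ corresponds under this embedding to a coderivation commuting with the diagonal, and since $\pi=\mu+\ad$ and $T$ both respect the diagonal structure, the bracket $[\pi,-]_\NR$ and the operations defining $h_T$ preserve the subspace of ``diagonal'' cochains; I would use whichever of these two routes is shorter, the computational one being safest to write out in full.
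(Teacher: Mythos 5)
Your proposal is correct and follows essentially the same route as the paper: apply the componentwise formula $\huaD(f,f,\theta)=\big((\partial(f,f))_\g,(\partial(f,f))_V,\delta\theta+h_T(f,f)\big)$, note that the last component automatically lies in $\Hom(\wedge^n\g,\g)$ since $V=\g$, and verify that $(\partial(f,f))_V=\dM_\CE f=(\partial(f,f))_\g$ when $\rho=\ad$, so the result is $\frki(\dM_\CE f,\delta\theta+h_T(f,f))$. The only difference is that the paper actually writes out the direct computation you defer as ``a short direct computation'' (where the skew-symmetry of $f$ is used to match the terms $f(\ldots,[x_i,x_{n+1}]_\g)$ with the Chevalley--Eilenberg formula), rather than invoking the diagonal-embedding argument you mention as an alternative.
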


\begin{proof}
Let $(f,f,\theta)\in\Img^n(\frki)$. By the definition of $\huaD$, we have
\begin{eqnarray*}
\huaD(f,f,\theta)=((\partial(f,f))_\g,(\partial(f,f))_V,\delta \theta+h_T(f,f)).
\end{eqnarray*}
By \eqref{cohomology-algebra-rep}, we have $(\partial(f,f))_\g=\dM_\CE f$. And for any $x_1,\cdots,x_{n+1}\in\g$, we have
\begin{eqnarray*}
&&(\partial(f,f))_V(x_1,\cdots,x_{n+1})\\
&=&\sum_{1\le i<j\le n}(-1)^{i+j}f([x_i,x_j]_\g,x_1,\cdots,\hat{x}_i,\cdots,\hat{x}_j,\cdots,x_n,x_{n+1})+(-1)^{n-1}[f(x_1,\cdots,x_{n}),x_{n+1}]_\g\\
&&+\sum_{i=1}^{n}(-1)^{i+1}\Big([x_i,f(x_1,\cdots,\hat{x}_i,\cdots,x_n,x_{n+1})]_\g-f\big(x_1,\cdots,\hat{x}_i,\cdots,x_n,[x_i,x_{n+1}]_\g\big)\Big)\\
&=&(\dM_\CE f)(x_1,\cdots,x_{n+1}).
\end{eqnarray*}
Thus, we obtain  $\huaD(f,f,\theta)=(\dM_\CE f,\dM_\CE f,\delta \theta+h_T(f,f))=\frki(\dM_\CE f,\delta \theta+h_T(f,f))$, which implies that $(\oplus_n\Img^n(\frki),\huaD)$ is a subcomplex.
\end{proof}

We define the projection $\frkp:\Img^n(\frki)\lon \frkC^n_{\RB}(\g,T)$ by
$$
\frkp(f,f,\theta)=(f,\theta), \quad \forall f  \in \Hom(\wedge^n\g,\g), \theta\in\Hom(\wedge^{n-1}\g,\g).
$$
Then for $n\geq0,$ we define $\huaD_\RB:\frkC^n_{\RB}(\g,T)\lon \frkC^{n+1}_{\RB}(\g,T)$ by
$
\huaD_\RB=\frkp\circ \huaD\circ \frki.
$
More precisely,
\begin{eqnarray}\label{eq:dRB}
\huaD_\RB(f,\theta)=\Big({\dM_\CE} f, \delta \theta +\Omega f\Big),\quad \forall f\in \Hom(\wedge^n\g,\g),~\theta\in \Hom(\wedge^{n-1}\g,\g),
\end{eqnarray}
where  $\delta$ is given by \eqref{eq:odiff} and $\Omega:\Hom(\wedge^n\g,\g)\lon \Hom(\wedge^n\g,\g)$ is defined  by
\begin{eqnarray*}
(\Omega f)(x_1,\cdots,x_n)=(-1)^{n}\Big(f(Tx_1,\cdots,Tx_n)-\sum_{i=1}^{n}Tf(Tx_1,\cdots,Tx_{i-1},x_i,Tx_{i+1},\cdots,Tx_n)\Big).
\end{eqnarray*}
\begin{thm}
The map $\huaD_\RB$ is a coboundary operator, i.e. $\huaD_\RB\circ\huaD_\RB=0$.
\end{thm}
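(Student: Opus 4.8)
The plan is to deduce $\huaD_\RB\circ\huaD_\RB=0$ from the identity $\huaD\circ\huaD=0$ of Theorem \ref{cohomology-of-relative-RB} together with Proposition \ref{sub-complex-RB-cohomology}, rather than by manipulating the explicit formula \eqref{eq:dRB} directly. The conceptual point is that $\frki$ and $\frkp$ exhibit the complex $\frkC^\bullet_{\RB}(\g,T)$ as a \emph{retract} of the subcomplex $\Img^\bullet(\frki)$, and a retract of a cochain complex is again a cochain complex.

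First I would record the two elementary identities $\frkp\circ\frki=\Id$ on $\frkC^n_{\RB}(\g,T)$ and $\frki\circ\frkp=\Id$ on $\Img^n(\frki)$, which are immediate from the definitions $\frki(f,\theta)=(f,f,\theta)$ and $\frkp(f,f,\theta)=(f,\theta)$. In particular $\frki$ is injective and $\frkp|_{\Img^n(\frki)}$ is its two-sided inverse.

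Next, the key step is to show that $\frki$ is a cochain map, i.e. $\huaD\circ\frki=\frki\circ\huaD_{\RB}$. This is where Proposition \ref{sub-complex-RB-cohomology} enters: since $\huaD(\frki(f,\theta))\in\Img^{n+1}(\frki)$, we may apply $\frki\circ\frkp=\Id$ on $\Img^{n+1}(\frki)$ to get
$$\frki\circ\huaD_{\RB}=\frki\circ\frkp\circ\huaD\circ\frki=\huaD\circ\frki.$$
The conclusion then follows formally: $\frki\circ\huaD_{\RB}\circ\huaD_{\RB}=\huaD\circ\frki\circ\huaD_{\RB}=\huaD\circ\huaD\circ\frki=0$, and injectivity of $\frki$ forces $\huaD_{\RB}\circ\huaD_{\RB}=0$.

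I do not anticipate a genuine obstacle: everything rests on Proposition \ref{sub-complex-RB-cohomology}, whose proof already displays $\huaD(f,f,\theta)=\frki\big(\dM_\CE f,\ \delta\theta+h_T(f,f)\big)$; one checks (via the skew-symmetry of $f$ and a reordering of arguments) that $h_T(f,f)$ specializes under the adjoint representation to $\Omega f$, so that this is consistent with \eqref{eq:dRB}. If instead one wanted a self-contained computational proof, writing $\huaD_{\RB}(f,\theta)=(\dM_\CE f,\ \delta\theta+\Omega f)$ gives $\huaD_{\RB}^2(f,\theta)=(\dM_\CE^2 f,\ \delta^2\theta+\delta\Omega f+\Omega\dM_\CE f)$, so it would remain only to verify $\delta\circ\Omega+\Omega\circ\dM_\CE=0$; this is precisely the restriction to $\Img^\bullet(\frki)$ of the mixed identity $\delta\circ h_T+h_T\circ\partial=0$ that is implicit in Theorem \ref{cohomology-of-relative-RB}, and it would be the computational heart of that alternative route.
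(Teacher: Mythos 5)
Your proposal is correct and follows essentially the same route as the paper: the paper also defines $\huaD_\RB=\frkp\circ\huaD\circ\frki$ and computes $\huaD_\RB\circ\huaD_\RB=\frkp\circ\huaD\circ\frki\circ\frkp\circ\huaD\circ\frki=\frkp\circ\huaD\circ\huaD\circ\frki=0$, using Proposition \ref{sub-complex-RB-cohomology} together with $\frki\circ\frkp=\Id$ on $\Img(\frki)$. Your reformulation via ``$\frki$ is an injective cochain map'' is just a repackaging of the same argument (and is, if anything, slightly more careful in noting that $\frki\circ\frkp=\Id$ only on the image, exactly where Proposition \ref{sub-complex-RB-cohomology} guarantees $\huaD\circ\frki$ lands).
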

\begin{proof}
By Proposition \ref{sub-complex-RB-cohomology} and $\frki\circ\frkp=\Id$, we have
  \begin{eqnarray*}
   \huaD_\RB\circ\huaD_\RB=\frkp\circ \huaD\circ \frki\circ \frkp\circ \huaD\circ \frki=\frkp\circ \huaD\circ   \huaD\circ \frki=0,
  \end{eqnarray*}
 which finishes the proof.
\end{proof}

\begin{defi}
  Let $(\g,[\cdot,\cdot]_\g,T)$ be a Rota-Baxter Lie algebra. The cohomology of the cochain complex  $(\oplus_{n=0}^{+\infty}\frkC^n_{\RB}(\g,T),\huaD_\RB)$ is taken to be the {\em cohomology of the Rota-Baxter Lie algebra} $(\g,[\cdot,\cdot]_\g,T)$. Denote the $n$-th cohomology group by $\huaH^n_\RB(\g,T).$
\end{defi}

\begin{thm}\label{cohomology-exact-RB}
There is a short exact sequence of the  cochain complexes:
$$
0\longrightarrow(\oplus_{n=0}^{+\infty}\frkC^n(T),\delta)\stackrel{\iota}{\longrightarrow}(\oplus_{n=0}^{+\infty}\frkC^n_{\RB}(\g,T),\huaD_\RB)\stackrel{p}{\longrightarrow} (\oplus_{n=0}^{+\infty}\frkC^n_\Lie(\g;\g),\dM_\CE)\longrightarrow 0,
$$
where $\iota(\theta)=(0,\theta)$ and $p(f,\theta)=f$ for all $f\in\Hom(\wedge^n\g,\g)$ and $\theta\in \Hom(\wedge^{n-1}\g,\g)$.

Consequently,
there is a long exact sequence of the  cohomology groups:
$$
\cdots\longrightarrow\huaH^n(T)\stackrel{\huaH^n(\iota)}{\longrightarrow}\huaH^n_\RB(\g,T)\stackrel{\huaH^n(p)}{\longrightarrow} \huaH_\Lie^n(\g,\g)\stackrel{c^n}\longrightarrow \huaH^{n+1}(T)\longrightarrow\cdots,
$$
where the connecting map $c^n$ is defined by
$c^n([\alpha])=[\Omega\alpha],$  for all $[\alpha]\in \huaH_\Lie^n(\g,\g).$
\end{thm}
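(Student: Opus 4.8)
The plan is to imitate, almost verbatim, the argument for Theorem~\ref{cohomology-exact}, exploiting the explicit triangular shape of the coboundary operator recorded in \eqref{eq:dRB}. First I would verify that $\iota$ and $p$ are morphisms of cochain complexes. Since $\huaD_\RB(0,\theta)=(\dM_\CE 0,\,\delta\theta+\Omega 0)=(0,\delta\theta)$ by \eqref{eq:dRB}, the inclusion $\iota(\theta)=(0,\theta)$ satisfies $\huaD_\RB\circ\iota=\iota\circ\delta$; and since $p(\huaD_\RB(f,\theta))=\dM_\CE f=\dM_\CE\big(p(f,\theta)\big)$, the projection $p(f,\theta)=f$ satisfies $p\circ\huaD_\RB=\dM_\CE\circ p$. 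Here one uses the normalizations $\frkC^0_{\RB}(\g,T)=0=\frkC^0_\Lie(\g;\g)$ and $\frkC^1_{\RB}(\g,T)=\Hom(\g,\g)=\frkC^1_\Lie(\g;\g)$ so that these maps respect the gradings.

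Next, for each fixed $n$ the sequence $0\to\frkC^n(T)\xrightarrow{\iota}\frkC^n_{\RB}(\g,T)\xrightarrow{p}\frkC^n_\Lie(\g;\g)\to 0$ is short exact: it is nothing but the direct sum decomposition $\frkC^n_{\RB}(\g,T)=\Hom(\wedge^n\g,\g)\oplus\Hom(\wedge^{n-1}\g,\g)=\frkC^n_\Lie(\g;\g)\oplus\frkC^n(T)$, with $\iota$ the inclusion of the second summand and $p$ the projection onto the first (the low-degree cases $n=0,1$, where the cochain groups are truncated, being checked directly). Combined with the previous paragraph this yields the asserted short exact sequence of cochain complexes, and the associated long exact sequence in cohomology then follows from the standard zig-zag (snake) lemma of homological algebra. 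Alternatively, one may observe that this short exact sequence is the restriction of the one in Theorem~\ref{cohomology-exact} for the relative Rota-Baxter Lie algebra $((\g,[\cdot,\cdot]_\g),\ad,T)$ along the subcomplex inclusion $\frki$ of Proposition~\ref{sub-complex-RB-cohomology}.

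Finally I would identify the connecting homomorphism $c^n$ by the usual diagram chase. Given a cocycle $\alpha\in\frkC^n_\Lie(\g;\g)$, i.e. $\dM_\CE\alpha=0$, lift it along $p$ to $(\alpha,0)\in\frkC^n_{\RB}(\g,T)$ and apply $\huaD_\RB$: by \eqref{eq:dRB}, $\huaD_\RB(\alpha,0)=(\dM_\CE\alpha,\,\delta 0+\Omega\alpha)=(0,\Omega\alpha)=\iota(\Omega\alpha)$. Hence $c^n([\alpha])=[\Omega\alpha]$. That $\Omega\alpha$ is a $\delta$-cocycle is automatic: $\iota\big(\delta(\Omega\alpha)\big)=\huaD_\RB(0,\Omega\alpha)=\huaD_\RB\circ\huaD_\RB(\alpha,0)=0$ and $\iota$ is injective; independence of the choice of representative $\alpha$ is the routine verification built into the snake lemma.

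There is in fact very little to obstruct this argument: the substantive work, namely producing the closed formula \eqref{eq:dRB} for $\huaD_\RB$ and proving $\huaD_\RB\circ\huaD_\RB=0$, has already been done. The only points requiring care are bookkeeping ones: matching the degree shift between the $\Hom(\wedge^{n-1}\g,\g)$-slot and the $\Hom(\wedge^n\g,\g)$-slot with the stated indexing of $\huaH^n(T)$, $\huaH^n_\RB(\g,T)$ and $\huaH^n_\Lie(\g,\g)$, and handling the truncated low-degree terms. The proof is otherwise formally identical to that of Theorem~\ref{cohomology-exact}.
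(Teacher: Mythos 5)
Your proposal is correct and follows essentially the same route as the paper, whose proof simply observes that the upper-triangular form of $\huaD_\RB$ in \eqref{eq:dRB} yields the degreewise split short exact sequence and hence, by the standard snake-lemma argument, the long exact sequence with connecting map $c^n([\alpha])=[\Omega\alpha]$. Your additional verifications (chain-map property of $\iota$ and $p$, the low-degree truncations, and the explicit diagram chase identifying $c^n$) are exactly the details the paper leaves implicit.
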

\begin{proof}
By  \eqref{eq:dRB},  we have the short exact sequence of cochain complexes which induces a long exact sequence of cohomology groups.
\end{proof}

\begin{rmk}
 The approach  used to define $\huaD_\RB$, can be also used to obtain the $L_\infty$-algebra structure $\{\frkl_k\}_{k=1}^{+\infty}$ on $\oplus_n\frkC^n_\RB(\g,T)$ controlling deformations of Rota-Baxter Lie algebras.  By Theorem \ref{thm:Simultaneous-deformation}, we have the $L_\infty$-algebra $(\oplus_n\frkC^n(\g,\ad,T),\{l_k\}_{k=1}^{+\infty})$ which controls deformations of the relative Rota-Baxter Lie algebra $(\g,\ad,T)$. Define $\frkl_k$ by
$$
\frkl_k(X_1,\cdots,X_k):=\frkp l_k(\frki(X_1),\cdots,\frki(X_k)),
$$
for all homogeneous elements $  X_i\in \oplus_n\frkC^n_\RB(\g,T).$ Then $(\oplus_n\frkC^n_\RB(\g ,T),\{\frkl_k\}_{k=1}^{+\infty})$ is an $L_\infty$-algebra embedded into $(\oplus_n\frkC^n(\g,\ad,T),\{l_k\}_{k=1}^{+\infty})$ as an $L_\infty$-subalgebra.
\end{rmk}

\begin{rmk}
Similarly to the study of infinitesimal deformations of relative Rota-Baxter Lie algebras, we can show that infinitesimal deformations of   Rota-Baxter Lie algebras are classified by the second cohomology group $\huaH^2_\RB(\g,R).$ We omit the details.
\end{rmk}

\subsection{Cohomology and  infinitesimal deformations of triangular Lie bialgebras}\label{sec:cohomologyTLB}
In this subsection, all vector spaces are assumed to be finite-dimensional. First we define the cohomology of   triangular  Lie bialgebras with the help of the general cohomological framework for relative Rota-Baxter Lie algebras. Then we establish the standard classification result for infinitesimal deformations of triangular Lie bialgebras using this cohomology theory.

Recall that a Lie bialgebra is a vector space $\g$ equipped
with a Lie algebra structure
$[\cdot,\cdot]_\g:\wedge^2\g\longrightarrow\g$ and a Lie coalgebra
structure $\delta:\g\longrightarrow\wedge^2\g$ such that $\delta$
is a 1-cocycle on $\g$ with coefficients in $\wedge^2\g$.
The Lie bracket $[\cdot,\cdot]_\g$ in a Lie algebra $\g$
naturally extends to the  {\em Schouten-Nijenhuis bracket} $[\cdot,\cdot]_\SN$ on $\wedge^\bullet\g=\oplus_{k\geq 0} \wedge^{k+1}\g$. More precisely, we have
$$
~[x_1\wedge\cdots \wedge x_p,y_1\wedge\cdots\wedge y_q]_\SN
=\sum_{1\le i\le p\atop 1\le j\le q} (-1)^{i+j}[x_i,y_j]_\g\wedge x_1\wedge\cdots\hat{x}_i\cdots \wedge x_p\wedge y_1\wedge\cdots\hat{y}_j\cdots\wedge y_q.
$$

An element $r\in\wedge^2\g$ is called a {\em skew-symmetric $r$-matrix} \cite{STS} if it satisfies the {\em classical Yang-Baxter equation}
$
[r,r]_\SN=0
$.
It is well known ~\cite{Ku} that $r$ satisfies the classical Yang-Baxter
equation if and only if $r^\sharp$ is a relative Rota-Baxter operator on $\g$
with respect to the coadjoint representation,
where $r^\sharp:\g^*\lon\g$ is defined by $\langle r^\sharp(\xi),\eta\rangle=\langle r,\xi\wedge \eta\rangle$ for all $\xi,\eta\in\g^*$.

Let $r$ be a skew-symmetric $r$-matrix. Define
$\delta_r:\g\longrightarrow\wedge^2\g$ by
$
\delta_r(x)=[x,r]_\SN,$  for all $x\in\g.
$
Then $(\g,[\cdot,\cdot]_\g,\delta_r)$ is a Lie bialgebra, which is
called a {\em triangular Lie bialgebra}. From now on, we will denote a triangular Lie bialgebra by $(\g,[\cdot,\cdot]_\g,r)$.

\begin{defi}\label{defi:iso}
Let $(\g_1,[\cdot,\cdot]_{\g_1},r_1)$ and $(\g_2,\{\cdot,\cdot\}_{\g_2},r_2)$ be triangular  Lie bialgebras. A linear map $\phi:\g_2\rightarrow\g_1$ is called a {\em homomorphism} from  $(\g_2,\{\cdot,\cdot\}_{\g_2},r_2)$ to $(\g_1,[\cdot,\cdot]_{\g_1},r_1)$ if $\phi$ is a Lie algebra homomorphism and
$
(\phi\otimes \phi)(r_2)=r_1$.
If $\phi$ is invertible, then $\phi$ is called
an {\em isomorphism} between triangular Lie bialgebras.
\end{defi}

The above definition is consistent with the equivalence between $r$-matrices given in \cite{CP}.

Let $\g$ be a Lie algebra and $r\in\wedge^2\g$ a skew-symmetric $r$-matrix. Define the set of 0-cochains and 1-cochains to be zero and define the set of $k$-cochains to be $\wedge^k\g$. Define $\dM_r:\wedge^k\g\lon\wedge^{k+1}\g$ by
\begin{equation}\label{eq:dr}
  \dM_r \chi =[r,\chi]_\SN,\quad\forall \chi\in \wedge^k\g.
\end{equation}
Then $\dM_r^2=0.$
Denote by $\huaH^k(r)$ the corresponding $k$-th cohomology group,
called  the {\em $k$-th cohomology group of the skew-symmetric
$r$-matrix $r$}.

For any $k\geq 1$, define $\Psi:\wedge^{k+1}\g\longrightarrow \Hom(\wedge^k\g^*,\g)$ by
\begin{equation}\label{eq:defipsi}
 \langle\Psi(\chi)(\xi_1,\cdots,\xi_k),\xi_{k+1}\rangle=\langle \chi,\xi_1\wedge\cdots\wedge\xi_k\wedge\xi_{k+1}\rangle,\quad \forall \chi\in\wedge^{k+1}\g, \xi_1,\cdots, \xi_{k+1}\in\g^*.
\end{equation}
By \cite[Theorem 7.7]{TBGS-1}, we have
\begin{equation}\label{eq:relationdd}
  \Psi(\dM_r\chi)=\delta(\Psi(\chi)),\quad \forall \chi\in\wedge^k\g.
\end{equation}
Thus $(\Img(\Psi),\delta)$ is a subcomplex of the cochain complex $(\oplus_k\frkC^k(r^\sharp),\delta)$ associated to the relative Rota-Baxter operator $r^\sharp$, where $\Img(\Psi):=\oplus_k\{\Psi(\chi)|\forall \chi\in\wedge^k\g\}$ and $\delta$ is the coboundary operator given by \eqref{defi:coboundary-O} for the relative Rota-Baxter operator $r^\sharp$.

In the following, we define the cohomology of a triangular Lie bialgebra $(\g,[\cdot,\cdot]_\g,r)$. We define the set of  $0$-cochains $\frkC^0_{\TLB}(\g,r)$ to be $0$, and define
the set of  $1$-cochains to be $\frkC^1_{\TLB}(\g,r):=\Hom(\g,\g)$. For $n\geq 2$, define the space of   $n$-cochains $\frkC^n_{\TLB}(\g,r)$ by
\begin{eqnarray*}
\frkC^n_{\TLB}(\g,r):=\Hom(\wedge^n\g,\g)\oplus \wedge^{n}\g.
\end{eqnarray*}

Define the embedding $\frki:\frkC^n_{\TLB}(\g,r)\lon \frkC^n(\g,\ad^*,r^{\sharp})=\Hom(\wedge^n\g,\g)\oplus \Hom(\wedge^{n-1}\g\otimes \g^*,\g^*)\oplus \Hom(\wedge^{n-1}\g^*,\g)$ by
$$
\frki(f,\chi)=(f,f^\star,\Psi(\chi)),\quad \forall f\in \Hom(\wedge^n\g,\g), \chi\in\wedge^n\g,
$$
where  $f^\star\in\Hom(\wedge^{n-1}\g\otimes\g^*,\g^*)$ is defined by
\begin{eqnarray}
 \label{dual-lie-r-1}\langle f^\star(x_1,\cdots,x_{n-1},\xi),x_n\rangle=-\langle \xi,f(x_1,\cdots,x_{n-1},x_n)\rangle.
\end{eqnarray}

Denote by $\Img^n(\frki)$ the image of $\frki$, i.e. $\Img^n(\frki):=\{\frki(f,\chi)|\forall (f,\chi)\in \frkC^n_{\TLB}(\g,r)\}$.

\begin{pro}
With the above notation,   $(\oplus_n\Img^n(\frki),\huaD)$ is a subcomplex of the cochain complex $(\frkC^n(\g,\ad^*,r^{\sharp}),\huaD)$ associated to the relative Rota-Baxter Lie algebra $((\g,[\cdot,\cdot]_\g),\ad^*,r^\sharp)$.
\end{pro}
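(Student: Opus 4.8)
The plan is to evaluate $\huaD$ on an arbitrary element of $\Img^n(\frki)$ and to recognize the output as again lying in $\Img^{n+1}(\frki)$. Fix $(f,\chi)\in\frkC^n_{\TLB}(\g,r)$, so that $\frki(f,\chi)=(f,f^\star,\Psi(\chi))$ is a typical element of $\Img^n(\frki)$. Applying \eqref{eq:Dexplicit} to the relative Rota-Baxter Lie algebra $((\g,[\cdot,\cdot]_\g),\ad^*,r^\sharp)$ gives
\[
\huaD\big(f,f^\star,\Psi(\chi)\big)=\Big(\partial(f,f^\star),\ \delta\Psi(\chi)+h_{r^\sharp}(f,f^\star)\Big),
\]
where $\partial$ is the coboundary \eqref{defi:coboundary-rep} of the \LR pair $(\g,\ad^*)$, $\delta$ is the operator \eqref{eq:odiff} attached to $r^\sharp$, and $h_{r^\sharp}$ is given by \eqref{key-cohomology-T}. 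So the statement reduces to three assertions: (a) the $\g$-component of $\partial(f,f^\star)$ equals $\dM_\CE f$; (b) the $\g^*$-component of $\partial(f,f^\star)$ equals $(\dM_\CE f)^\star$; and (c) $\delta\Psi(\chi)+h_{r^\sharp}(f,f^\star)=\Psi(\chi')$ for some $\chi'\in\wedge^{n+1}\g$. Granting these, $\huaD(\frki(f,\chi))=\frki(\dM_\CE f,\chi')\in\Img^{n+1}(\frki)$, and the relation $\huaD\circ\huaD=0$ on the subcomplex is inherited from Theorem \ref{cohomology-of-relative-RB}.

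Assertion (a) is immediate from \eqref{cohomology-algebra-rep}, since the $\g$-valued part of $\partial$ depends only on the $\g$-valued part $f$ of the input and returns $\dM_\CE f$. For (b), I would substitute $f_V=f^\star$ and $\rho=\ad^*$ into \eqref{cohomology-algebra-rep-V}, pair the result with an arbitrary $x_{n+1}\in\g$, and use $\langle\ad^*_x\xi,y\rangle=-\langle\xi,[x,y]_\g\rangle$ together with the defining relation \eqref{dual-lie-r-1} for $f^\star$; a term-by-term comparison then identifies the outcome with $-\langle\xi,(\dM_\CE f)(x_1,\dots,x_{n+1})\rangle$, that is, with $(\dM_\CE f)^\star$. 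This is the coadjoint analogue of the adjoint computation carried out in the proof of Proposition \ref{sub-complex-RB-cohomology}, and the sign in \eqref{dual-lie-r-1} is chosen precisely to make it work. In (c) the first summand is handled by \eqref{eq:relationdd}, which yields $\delta\Psi(\chi)=\Psi(\dM_r\chi)$; it then remains to show that $h_{r^\sharp}(f,f^\star)$ itself belongs to $\Img(\Psi)$, after which one sets $\chi'=\dM_r\chi+\eta$ where $\Psi(\eta)=h_{r^\sharp}(f,f^\star)$, the element $\eta$ being unique since all spaces are finite-dimensional and $\Psi$ is injective.

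I expect the membership $h_{r^\sharp}(f,f^\star)\in\Img(\Psi)$ to be the main obstacle. By the definition of $h_{r^\sharp}$ (see \eqref{key-cohomology-T-abstract}) the element $h_{r^\sharp}(f,f^\star)$ automatically lies in $\Hom(\wedge^n\g^*,\g)$, so in view of \eqref{eq:defipsi} it is enough to check that $\langle h_{r^\sharp}(f,f^\star)(\xi_1,\dots,\xi_n),\xi_{n+1}\rangle$ is antisymmetric under the transposition of $\xi_n$ and $\xi_{n+1}$; this, together with the alternating property already present in $\xi_1,\dots,\xi_n$, forces total antisymmetry in all $n+1$ arguments and hence produces the required $\eta\in\wedge^{n+1}\g$. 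Concretely, I would unwind \eqref{key-cohomology-T} with $f_\g=f$, $f_V=f^\star$, $T=r^\sharp$, pair with $\xi_{n+1}$, and rewrite every occurrence of $r^\sharp$ using the skew-adjointness $\langle r^\sharp\xi,\eta\rangle=\langle r,\xi\wedge\eta\rangle=-\langle r^\sharp\eta,\xi\rangle$ (this is where $r\in\wedge^2\g$ is genuinely used) and the relation \eqref{dual-lie-r-1}; the term coming from $f$ and the terms coming from $f^\star$ then recombine visibly into a skew expression. The residual effort is careful sign bookkeeping, while assertions (a) and (b) are routine dualizations of computations already available in the paper and in \cite{TBGS-1}.
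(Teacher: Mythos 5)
Your proposal is correct and follows essentially the same route as the paper's own proof: evaluate $\huaD$ on $\frki(f,\chi)=(f,f^\star,\Psi(\chi))$, identify the first two components as $\dM_\CE f$ and $(\dM_\CE f)^\star$, and show the third lies in $\Img(\Psi)$ via \eqref{eq:relationdd} and the skew-adjointness of $r^\sharp$ together with \eqref{dual-lie-r-1}. The paper states the steps (b) and (c) tersely ("we can deduce"), whereas you spell out the antisymmetry check that underlies them — your computation in fact reproduces the explicit formula for $\Theta f$ given in the subsequent lemma — so there is no substantive difference in method.
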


\begin{proof}
Let $(f,f^\star,\Psi(\chi))\in\Img^n(\frki)$. By the definition of $\huaD$, we have
\begin{eqnarray*}
\huaD(f,f^\star,\Psi(\chi))=((\partial(f,f^\star))_\g,(\partial(f,f^\star))_{\g^*},\delta \Psi(\chi)+h_{r^\sharp}(f,f^\star)).
\end{eqnarray*}
By \eqref{cohomology-algebra-rep}, we have $(\partial(f,f^\star))_\g=\dM_\CE f$. By \eqref{cohomology-algebra-rep-V},   we can deduce that
  $(\partial(f,f^\star))_{\g^*}=(\dM_\CE f)^\star$. By \eqref{eq:odiff} and \eqref{key-cohomology-T},    we can deduce that
$
\delta \Psi(\chi)+h_{r^\sharp}(f,f^\star)\in \Img(\Psi).
$
Thus, we obtain  $$\huaD((f,f^\star,\Psi(\chi)))=(\dM_\CE f,(\dM_\CE f)^\star,\delta \Psi(\chi)+h_{r^\sharp}(f,f^\star))=\frki\Big(\dM_\CE f,\Psi^{-1}\big(\delta \Psi(\chi)+h_{r^\sharp}(f,f^\star)\big)\Big),$$ which implies that $(\oplus_n\Img^n(\frki),\huaD)$ is a subcomplex.
\end{proof}

Define the projection $\frkp:\Img^n(\frki)\lon \frkC^n_{\TLB}(\g,r)$ by
$$
\frkp(f,f^\star,\theta)=(f,\theta^\flat), \quad \forall f \in \Hom(\wedge^n\g,\g),~ \theta\in\{\Psi(\chi)|\forall \chi\in\wedge^n\g\},
$$
where $\theta^\flat\in\wedge^n\g$ is defined by$
\langle\theta^\flat,\xi_1\wedge\cdots\wedge\xi_n\rangle=\langle\theta(\xi_1,\cdots,\xi_{n-1}),\xi_n\rangle.
$
Define the {\em coboundary operator} $\huaD_\TLB:\frkC^n_{\TLB}(\g,r)\lon \frkC^{n+1}_{\TLB}(\g,r)$ for a triangular Lie bialgebra  by
$$
\huaD_\TLB=\frkp\circ \huaD\circ \frki.
$$

\begin{thm}
The map $\huaD_\TLB$ is a coboundary operator, i.e. $\huaD_\TLB\circ\huaD_\TLB=0$.
\end{thm}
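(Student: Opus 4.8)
The plan is to follow verbatim the strategy used above for Rota-Baxter Lie algebras, where the identity $\huaD_\RB\circ\huaD_\RB=0$ was deduced from $\frki\circ\frkp=\Id$ on the relevant subcomplex together with its being a subcomplex and with $\huaD\circ\huaD=0$. Thus there are exactly three ingredients to assemble: (1) the Proposition immediately above, which already shows that $(\oplus_n\Img^n(\frki),\huaD)$ is a subcomplex of $(\oplus_n\frkC^n(\g,\ad^*,r^\sharp),\huaD)$, so that $\huaD$ maps $\Img^n(\frki)$ into $\Img^{n+1}(\frki)$; (2) the relation $\frki\circ\frkp=\Id$ on the graded subspace $\oplus_n\Img^n(\frki)$; and (3) the equality $\huaD\circ\huaD=0$ on the whole complex $\oplus_n\frkC^n(\g,\ad^*,r^\sharp)$, which is Theorem~\ref{cohomology-of-relative-RB}.

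First I would verify (2). Given $(f,f^\star,\Psi(\chi))\in\Img^n(\frki)$ one has $\frkp(f,f^\star,\Psi(\chi))=(f,\Psi(\chi)^\flat)$, so it suffices to check $\Psi(\chi)^\flat=\chi$ for all $\chi\in\wedge^n\g$. Pairing $\Psi(\chi)^\flat$ against a decomposable element $\xi_1\wedge\cdots\wedge\xi_n\in\wedge^n\g^*$ and using the definition of $\flat$ followed by \eqref{eq:defipsi} gives $\langle\Psi(\chi)^\flat,\xi_1\wedge\cdots\wedge\xi_n\rangle=\langle\Psi(\chi)(\xi_1,\cdots,\xi_{n-1}),\xi_n\rangle=\langle\chi,\xi_1\wedge\cdots\wedge\xi_n\rangle$; since $\g$ is finite-dimensional this forces $\Psi(\chi)^\flat=\chi$, hence $\frkp\circ\frki=\Id$ on $\frkC^\bullet_\TLB(\g,r)$. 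Moreover $\frki$ reconstructs the middle slot $f^\star$ from $f$ via \eqref{dual-lie-r-1}, so applying $\frki$ to $(f,\Psi(\chi)^\flat)=(f,\chi)$ returns precisely $(f,f^\star,\Psi(\chi))$; that is, $\frki\circ\frkp=\Id$ on $\oplus_n\Img^n(\frki)$.

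Finally I would combine these: $\huaD_\TLB\circ\huaD_\TLB=\frkp\circ\huaD\circ\frki\circ\frkp\circ\huaD\circ\frki$, and by (1) the map $\huaD\circ\frki$ takes values in $\oplus_n\Img^n(\frki)$, on which $\frki\circ\frkp$ is the identity by (2); hence $\frki\circ\frkp\circ\huaD\circ\frki=\huaD\circ\frki$, whence $\huaD_\TLB\circ\huaD_\TLB=\frkp\circ\huaD\circ\huaD\circ\frki=0$ by Theorem~\ref{cohomology-of-relative-RB}. I do not anticipate any genuine obstacle: essentially all of the content lives in the preceding Proposition (the verification that $\partial$ preserves the $(f,f^\star)$-shape and that $\delta\Psi(\chi)+h_{r^\sharp}(f,f^\star)$ again lies in $\Img(\Psi)$), while the only bookkeeping specific to this theorem is the elementary duality identity $\Psi(\chi)^\flat=\chi$, where finite-dimensionality of $\g$ is what lets us identify $\wedge^{n}\g$ with the relevant space of multilinear maps on $\g^*$ and thereby treat $\frkp$ and $\frki$ as mutually inverse on the image.
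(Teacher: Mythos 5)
Your proposal is correct and follows essentially the same route as the paper: the paper's proof is exactly the computation $\huaD_\TLB\circ\huaD_\TLB=\frkp\circ\huaD\circ\frki\circ\frkp\circ\huaD\circ\frki=\frkp\circ\huaD\circ\huaD\circ\frki=0$, using that $\frki\circ\frkp=\Id$ on the image of $\frki$ together with the preceding subcomplex proposition. Your extra verification that $\Psi(\chi)^\flat=\chi$ (hence $\frki\circ\frkp=\Id$ on $\Img(\frki)$) is a detail the paper simply asserts, and it checks out.
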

\begin{proof}
 Since $\frki\circ\frkp=\Id$ when restricting on the image of $\frki$, we have
  \begin{eqnarray*}
   \huaD_\TLB\circ\huaD_\TLB=\frkp\circ \huaD\circ \frki\circ \frkp\circ \huaD\circ \frki=\frkp\circ \huaD\circ   \huaD\circ \frki=0,
  \end{eqnarray*}
 which finishes the proof.
\end{proof}

 \begin{defi}
  Let $(\g,[\cdot,\cdot]_\g,r)$ be a triangular Lie algebra. The cohomology of the cochain complex  $(\oplus_{n=0}^{+\infty}\frkC^n_{\TLB}(\g,r),\huaD_\TLB)$ is called  the {\em cohomology of the triangular Lie bialgebra} $(\g,[\cdot,\cdot]_\g,r)$. Denote the $n$-th cohomology group by $ \huaH^n_\TLB(\g,r)$.
\end{defi}

Now we give the  precise formula for the coboundary operator $\huaD_\TLB$. By the definition of $\frki$, $\frkp$, $\huaD$ and \eqref{eq:relationdd}, we have
\begin{eqnarray}\label{eq:dTLBexplicit}
\huaD_\TLB(f,\chi)=\Big({\dM}_\CE f, \Theta f+\dM_r \chi \Big),\qquad \forall f\in \Hom(\wedge^n\g,\g),~\chi\in \wedge^{n}\g,
\end{eqnarray}
where   $\dM_r$ is given by \eqref{eq:dr} and $\Theta:\Hom(\wedge^n\g,\g)\lon  \wedge^{n+1}\g $ is defined  by
$
\Theta f=\Psi^{-1}(h_{r^\sharp}(f,f^\star)).
$

The precise formula of $\Theta$ is given as follows.
\begin{lem}For any $f\in\Hom(\wedge^n\g,\g)$ and $\xi_1,\cdots,\xi_{n+1}\in\g^*$, we have
 \begin{eqnarray}
   \langle \Theta f,\xi_1\wedge \cdots\wedge\xi_{n+1}\rangle=\sum_{i=1}^{n+1}(-1)^{i+1}\langle\xi_i,f(r^{\sharp}(\xi_1),\cdots,r^{\sharp}(\xi_{i-1}),r^{\sharp}(\xi_{i+1}),
   \cdots,r^{\sharp}(\xi_{n+1}))\rangle.
 \end{eqnarray}
\end{lem}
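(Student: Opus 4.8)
The plan is to trace through the chain of definitions $\huaD_\TLB=\frkp\circ\huaD\circ\frki$ that produces $\Theta f=\Psi^{-1}(h_{r^\sharp}(f,f^\star))$, and then carry out a direct sign-chasing computation. First I would use the defining property \eqref{eq:defipsi} of $\Psi$: for $\chi=\Theta f\in\wedge^{n+1}\g$ it gives $\langle\Theta f,\xi_1\wedge\cdots\wedge\xi_{n+1}\rangle=\langle\Psi(\Theta f)(\xi_1,\ldots,\xi_n),\xi_{n+1}\rangle=\langle h_{r^\sharp}(f,f^\star)(\xi_1,\ldots,\xi_n),\xi_{n+1}\rangle$, so the statement reduces to evaluating $h_{r^\sharp}(f,f^\star)$ on $\xi_1,\ldots,\xi_n$ and pairing the result with $\xi_{n+1}$.

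Next I would substitute into the explicit formula \eqref{key-cohomology-T} for $h_T$, taking $V=\g^*$, $T=r^\sharp$, $f_\g=f$ and $f_V=f^\star$ (as dictated by the definition of $\frki$):
\[
h_{r^\sharp}(f,f^\star)(\xi_1,\ldots,\xi_n)=(-1)^n f(r^\sharp\xi_1,\ldots,r^\sharp\xi_n)+\sum_{i=1}^n(-1)^{i+1}\,r^\sharp\!\big(f^\star(r^\sharp\xi_1,\ldots,\widehat{r^\sharp\xi_i},\ldots,r^\sharp\xi_n,\xi_i)\big).
\]
Pairing with $\xi_{n+1}$, the first term is immediate. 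For the $i$-th summand I would use that $r^\sharp$ is skew-symmetric, $\langle r^\sharp\xi,\eta\rangle=-\langle r^\sharp\eta,\xi\rangle$ (a consequence of $r\in\wedge^2\g$), to move $\xi_{n+1}$ into the argument of $r^\sharp$, and then invoke \eqref{dual-lie-r-1} with $(x_1,\ldots,x_{n-1})=(r^\sharp\xi_1,\ldots,\widehat{r^\sharp\xi_i},\ldots,r^\sharp\xi_n)$, $\xi=\xi_i$ and $x_n=r^\sharp\xi_{n+1}$ to rewrite $\langle f^\star(r^\sharp\xi_1,\ldots,\widehat{r^\sharp\xi_i},\ldots,r^\sharp\xi_n,\xi_i),r^\sharp\xi_{n+1}\rangle$ as $-\langle\xi_i,f(r^\sharp\xi_1,\ldots,\widehat{r^\sharp\xi_i},\ldots,r^\sharp\xi_n,r^\sharp\xi_{n+1})\rangle$. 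The two minus signs cancel, so the $i$-th summand becomes $(-1)^{i+1}\langle\xi_i,f(r^\sharp\xi_1,\ldots,\widehat{r^\sharp\xi_i},\ldots,r^\sharp\xi_{n+1})\rangle$.

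Finally, observing that the paired first term equals $(-1)^n\langle\xi_{n+1},f(r^\sharp\xi_1,\ldots,r^\sharp\xi_n)\rangle=(-1)^{(n+1)+1}\langle\xi_{n+1},f(r^\sharp\xi_1,\ldots,r^\sharp\xi_n)\rangle$, i.e.\ precisely the missing $i=n+1$ term of the alternating sum over $1\le i\le n+1$, I would assemble the pieces into the claimed formula. The only genuine obstacle is the sign bookkeeping — matching the $(-1)^n$ of \eqref{key-cohomology-T}, the $(-1)^{i+1}$ of the summation, the sign in \eqref{dual-lie-r-1}, and the skew-symmetry sign of $r^\sharp$, and checking that the $f_\g$-part of $h_{r^\sharp}$ slots in exactly as the top-index term; beyond this there is no conceptual content.
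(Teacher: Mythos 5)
Your proposal is correct and follows essentially the same route as the paper: unwind $\Theta f=\Psi^{-1}(h_{r^\sharp}(f,f^\star))$ via \eqref{eq:defipsi}, substitute the explicit formula \eqref{key-cohomology-T}, and convert the $f^\star$-terms using skew-symmetry of $r^\sharp$ together with \eqref{dual-lie-r-1}, the two signs cancelling so that the $f_\g$-term supplies the $i=n+1$ summand. If anything, you spell out the last step (which the paper compresses into a single equality) in more detail, and your sign bookkeeping is accurate.
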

\begin{proof} By the definition of $h_{r^\sharp}$ given by \eqref{key-cohomology-T}, we have
   \begin{eqnarray*}
   \langle \Theta f,\xi_1\wedge \cdots\wedge\xi_{n+1}\rangle&=&\Psi^{-1}(h_{r^\sharp}(f,f^\star))(\xi_1,\cdots,\xi_{n+1})\\
   &=&\langle h_{r^\sharp}(f,f^\star)(\xi_1,\cdots,\xi_{n}),\xi_{n+1}\rangle\\
   &=&   (-1)^{n}\langle f(r^{\sharp}(\xi_1),\cdots,r^{\sharp}(\xi_n)),\xi_{n+1}\rangle+\\
   &&
   \sum_{i=1}^{n}(-1)^{i+1}\langle r^{\sharp} f^\star (r^{\sharp}(\xi_1),\cdots,r^{\sharp}(\xi_{i-1}),r^{\sharp}(\xi_{i+1}),\cdots,r^{\sharp}(\xi_n),\xi_i),\xi_{n+1}\rangle\\
   &=&\sum_{i=1}^{n+1}(-1)^{i+1}\langle\xi_i,f(r^{\sharp}(\xi_1),\cdots,r^{\sharp}(\xi_{i-1}),r^{\sharp}(\xi_{i+1}),
   \cdots,r^{\sharp}(\xi_{n+1}))\rangle,
 \end{eqnarray*}
 which finishes the proof.
\end{proof}

\begin{thm}\label{cohomology-exact-r}
Let $(\g,[\cdot,\cdot]_\g,r)$ be a triangular Lie bialgebra. Then there is a short exact sequence of   cochain complexes:
$$
0\longrightarrow(\oplus_{n=2}^{+\infty}\wedge^{n}\g,\dM_r)\stackrel{\iota}{\longrightarrow}(\oplus_{n=0}^{+\infty}\frkC^n_{\TLB}(\g,r),\huaD_\TLB)\stackrel{p}{\longrightarrow} (\oplus_{n=0}^{+\infty}\frkC^n_\Lie(\g;\g),\dM_\CE)\longrightarrow 0,
$$
where $\iota(\chi)=(0,\chi)$ and $p(f,\chi)=f$ for all $\chi\in\wedge^n\g$ and $f\in\Hom(\wedge^n\g,\g)$.

Consequently, there is a long exact sequence of   cohomology groups:
\begin{equation}\label{long-exact-of-cohomology-r}
\cdots\longrightarrow\huaH^n(r)\stackrel{\huaH^n(\iota)}{\longrightarrow}\huaH^n_\TLB(\g,r)\stackrel{\huaH^n(p)}{\longrightarrow} \huaH_\Lie^n(\g,\g)\stackrel{c^n}\longrightarrow \huaH^{n+1}(r)\longrightarrow\cdots,
\end{equation}
where the connecting map $c^n$ is defined by
$c^n([\alpha])=[\Theta\alpha],$  for all $[\alpha]\in \huaH_\Lie^n(\g,\g).$
\end{thm}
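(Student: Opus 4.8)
The plan is to exploit the explicit block upper-triangular shape of $\huaD_\TLB$ recorded in \eqref{eq:dTLBexplicit}, namely $\huaD_\TLB(f,\chi)=(\dM_\CE f,\ \Theta f+\dM_r\chi)$, and then invoke the standard zig-zag lemma. First I would check that $\iota$ and $p$ are morphisms of cochain complexes. For $p$ this is immediate: $p(\huaD_\TLB(f,\chi))=\dM_\CE f=\dM_\CE(p(f,\chi))$. For $\iota$ one has $\huaD_\TLB(\iota(\chi))=\huaD_\TLB(0,\chi)=(0,\Theta 0+\dM_r\chi)=(0,\dM_r\chi)=\iota(\dM_r\chi)$, so $\iota$ intertwines $\dM_r$ with $\huaD_\TLB$. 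Next I would verify exactness in each degree: $\iota$ is visibly injective, $p$ is visibly surjective, and $p(f,\chi)=0$ forces $f=0$, i.e. $(f,\chi)=(0,\chi)=\iota(\chi)$, so $\Ker p=\Img\iota$. Here one uses that for $n\ge 2$ the cochains are $\frkC^n_{\TLB}(\g,r)=\Hom(\wedge^n\g,\g)\oplus\wedge^n\g$, while in degrees $0$ and $1$ both outer terms vanish or $p$ is the identity, so the kernel complex is precisely $\oplus_{n\ge 2}\wedge^{n}\g$ with differential $\dM_r$, matching the domain of $\iota$.

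Granting the short exact sequence of complexes, the long exact sequence is automatic, and it remains only to identify the connecting map $c^n$. Given $[\alpha]\in\huaH^n_\Lie(\g;\g)$ represented by a $\dM_\CE$-cocycle $\alpha\in\Hom(\wedge^n\g,\g)$, I would lift it to $(\alpha,0)\in\frkC^n_{\TLB}(\g,r)$ (so $p(\alpha,0)=\alpha$) and apply $\huaD_\TLB$: by \eqref{eq:dTLBexplicit}, $\huaD_\TLB(\alpha,0)=(\dM_\CE\alpha,\Theta\alpha)=(0,\Theta\alpha)$ since $\dM_\CE\alpha=0$. As $(0,\Theta\alpha)=\iota(\Theta\alpha)$ lies in the image of $\iota$, the element $\Theta\alpha\in\wedge^{n+1}\g$ is the required representative of $c^n([\alpha])$; it is automatically $\dM_r$-closed because $\iota(\dM_r\Theta\alpha)=\huaD_\TLB(0,\Theta\alpha)=\huaD_\TLB\huaD_\TLB(\alpha,0)=0$ and $\iota$ is injective. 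Hence $c^n([\alpha])=[\Theta\alpha]$. Independence of the lift is the usual check: replacing $(\alpha,0)$ by $(\alpha,\chi)$ changes $\huaD_\TLB(\alpha,\chi)$ by $\iota(\dM_r\chi)$, a coboundary in the kernel complex, so $[\Theta\alpha]$ is well defined; similarly a change of representative $\alpha\mapsto\alpha+\dM_\CE\beta$ alters $\Theta\alpha$ by $\Theta\dM_\CE\beta$, which one sees is $\dM_r$-exact from $\huaD_\TLB^2=0$ applied to $(\beta,0)$.

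The hard part here is essentially already behind us: the nontrivial input — the relation $\huaD_\TLB^2=0$, the factorization $\huaD_\TLB=\frkp\circ\huaD\circ\frki$, and above all the explicit formulas \eqref{eq:dTLBexplicit} for $\Theta$ and \eqref{eq:dr} for $\dM_r$ — has been established earlier. The only points demanding a little care are bookkeeping: keeping the degree shift consistent (the $r$-matrix complex begins in degree $2$, so $\iota$ is a degreewise isomorphism onto $\Ker p$ with nothing to check in degrees $0,1$), and the two well-definedness verifications for $c^n$ just indicated. I would therefore present the argument compactly as: deduce from \eqref{eq:dTLBexplicit} that $\iota,p$ are cochain maps fitting into a degreewise short exact sequence, quote the zig-zag lemma for the long exact sequence, and read off $c^n([\alpha])=[\Theta\alpha]$ from the snake-lemma diagram chase above.
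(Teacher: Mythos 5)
Your proposal is correct and follows essentially the same route as the paper: the proof there consists precisely of reading off from the explicit formula \eqref{eq:dTLBexplicit} that $\iota$ and $p$ give a degreewise short exact sequence of cochain complexes and then invoking the induced long exact sequence, with the connecting map identified as $c^n([\alpha])=[\Theta\alpha]$ by the standard snake-lemma computation you describe. Your additional well-definedness checks are fine (and are in any case automatic from the zig-zag lemma).
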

\begin{proof}
 By \eqref{eq:dTLBexplicit}, we have the short exact sequence of cochain complexes which induces a long exact sequence of cohomology groups.
\end{proof}

\begin{rmk}
  In a forthcoming paper \cite{Lazarev-1}, we will use the functorial approach to give the $L_\infty$-algebra structure on $\oplus_{n=0}^{+\infty}\frkC^n_{\TLB}(\g,r)$ that control deformations of triangular Lie bialgebras, and establish the relationship with the $L_\infty$-algebra  $(\oplus_{n=0}^{+\infty}\frkC^n (\g,\ad^*,r^\sharp),\{l_k\}_{k=1}^{+\infty})$ given by Theorem \ref{thm:Simultaneous-deformation}.
\end{rmk}

We will now consider $\R$-deformations and infinitesimal deformations of triangular Lie bialgebras using the above cohomology theory, where $\R$  is a local pro-Artinian $\K$-algebra  with the augmentation $\epsilon:\R\lon \K$.

Any triangular Lie bialgebra $(\g,[\cdot,\cdot]_\g,r)$ can be viewed as a triangular $\R$-Lie bialgebra   with the help of the augmentation map $\epsilon$.

\begin{defi}
An {\em $\R$-deformation} of a triangular Lie bialgebra $(\g,[\cdot,\cdot]_\g,r)$ contains of an $\R$-Lie algebra structure $[\cdot,\cdot]_\R$ on the tensor product $\R\otimes_\K\g$ and a skew-symmetric $r$-matrix $\huaX\in(\R\otimes_\K\g)\otimes_{\R}(\R\otimes_\K\g)\cong\R\otimes_\K\g\otimes_\K\g$ such that $\epsilon\otimes_\K\Id_\g$ is an $\R$-Lie algebra homomorphism from $(\R\otimes_\K\g,[\cdot,\cdot]_\R)$ to $(\g,[\cdot,\cdot]_\g)$ and $(\epsilon\otimes_\K\Id_\g\otimes_\K\Id_\g)(\huaX)=r$.
\end{defi}

\begin{defi}
Let $(\R\otimes_\K\g,[\cdot,\cdot]_\R,\huaX)$ and $(\R\otimes_\K\g,[\cdot,\cdot]'_\R,\huaX')$ be two $\R$-deformations of a triangular Lie bialgebra $(\g,[\cdot,\cdot]_\g,r)$. We call them {\em equivalent} if there exists a triangular $\R$-Lie bialgebra isomorphism
$\phi:(\R\otimes_\K\g,[\cdot,\cdot]'_\R,\huaX')\lon(\R\otimes_\K\g,[\cdot,\cdot]_\R,\huaX)$
such that
\begin{eqnarray}\label{triangular-equivalent-deformation-2}
\epsilon\otimes_\K\Id_\g=(\epsilon\otimes_\K\Id_\g)\circ\phi.
\end{eqnarray}
\end{defi}

\begin{defi}
A $\K[t]/(t^{2})$-deformation of the triangular Lie bialgebra $(\g,[\cdot,\cdot]_\g,r)$ is called an {\em infinitesimal deformation}.
\end{defi}

Let $\R=\K[t]/(t^{2})$ and $(\R\otimes_\K\g,[\cdot,\cdot]_\R,\huaX)$ be an infinitesimal deformation of $(\g,[\cdot,\cdot]_\g,r)$. Since $(\R\otimes_\K\g,[\cdot,\cdot]_\R,\huaX)$ is a triangular $\R$-Lie bialgebra, there exist $\omega_0,\omega_1\in\Hom(\g\wedge\g,\g)$ and $\huaX_0,\huaX_1\in\wedge_{\K}^2\g$ such that
\begin{eqnarray}\label{rota-baxter-infinitesimal-deformation}
[\cdot,\cdot]_\R=\omega_0+t\omega_1,\quad \huaX=\huaX_0+t\huaX_1.
\end{eqnarray}
 Since $\epsilon\otimes_\K\Id_\g$ is an $\R$-Lie algebra homomorphism from $(\R\otimes_\K\g,[\cdot,\cdot]_\R)$ to $(\g,[\cdot,\cdot]_\g)$, we deduce that $\omega_0=[\cdot,\cdot]_\g$. By $(\epsilon\otimes_\K\Id_\g\otimes_\K\Id_\g)(\huaX)=r$, we deduce that $\huaX_0=r$. Therefore, an infinitesimal deformation of $(\g,[\cdot,\cdot]_\g,r)$  is determined by a pair $(\omega_1,\huaX_1)$.
By the fact that $(\R\otimes_\K\g,[\cdot,\cdot]_\g+t\omega_1)$ is an $\R$-Lie algebra, we get
  \begin{eqnarray}
   \label{r-matrix-1} \dM_\CE \omega_1&=&0.
  \end{eqnarray}
   Then by the fact that $r+t\huaX_1$ is a skew-symmetric $r$-matrix of the $\R$-Lie algebra $(\R\otimes_\K\g,[\cdot,\cdot]_\g+t\omega_1)$, we deduce that
      \begin{eqnarray}
   \label{r-matrix-2}2(\dM_r\huaX_1+\Theta\omega_1)=0.
   \end{eqnarray}

\begin{pro}
   The pair $(\omega_1,\huaX_1)$ determines an  infinitesimal  deformation  of the triangular Lie bialgebra $(\g,[\cdot,\cdot]_\g,r)$ if and only if  $(\omega_1,\huaX_1)$ is a $2$-cocycle of the triangular Lie bialgebra $(\g,[\cdot,\cdot]_\g,r)$, i.e. $\huaD_\TLB(\omega_1,\huaX_1)=0$.
    \end{pro}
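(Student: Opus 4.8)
The plan is to read off both directions of the equivalence directly from the equations already derived in this subsection, combined with the explicit formula \eqref{eq:dTLBexplicit} for $\huaD_\TLB$; no new computation is needed.

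First I would recall that in the analysis preceding the statement we reduced an infinitesimal deformation of $(\g,[\cdot,\cdot]_\g,r)$ to a pair $(\omega_1,\huaX_1)\in\Hom(\wedge^2\g,\g)\oplus\wedge^2\g$, and that the two structural requirements — that $[\cdot,\cdot]_\g+t\omega_1$ be an $\R$-Lie bracket, and that $r+t\huaX_1$ be a skew-symmetric $r$-matrix for this bracket — translate, by \eqref{r-matrix-1} and \eqref{r-matrix-2} respectively, into the pair of conditions $\dM_\CE\omega_1=0$ and $\dM_r\huaX_1+\Theta\omega_1=0$. Here the coefficient $2$ appearing in \eqref{r-matrix-2} is harmless since $\K$ has characteristic $0$, so \eqref{r-matrix-2} is equivalent to $\dM_r\huaX_1+\Theta\omega_1=0$.

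Next I would invoke \eqref{eq:dTLBexplicit}, which gives $\huaD_\TLB(\omega_1,\huaX_1)=\big(\dM_\CE\omega_1,\ \Theta\omega_1+\dM_r\huaX_1\big)$. Since $\frkC^3_{\TLB}(\g,r)=\Hom(\wedge^3\g,\g)\oplus\wedge^3\g$ is a direct sum, the element $\huaD_\TLB(\omega_1,\huaX_1)$ vanishes if and only if each of its two components vanishes, i.e. if and only if $\dM_\CE\omega_1=0$ and $\Theta\omega_1+\dM_r\huaX_1=0$. Comparing with the previous paragraph, this is precisely the condition that $(\omega_1,\huaX_1)$ determine an infinitesimal deformation, which establishes the equivalence; and by definition an element of $\frkC^2_{\TLB}(\g,r)$ annihilated by $\huaD_\TLB$ is exactly a $2$-cocycle.

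The only step requiring a little attention — but it has already been carried out in deriving \eqref{r-matrix-2} — is the bookkeeping in expanding the classical Yang–Baxter equation $[r+t\huaX_1,r+t\huaX_1]_\SN=0$ with respect to the \emph{deformed} Schouten–Nijenhuis bracket: the $t^0$-term is $[r,r]_\SN=0$ automatically, while the $t^1$-term splits as $2[r,\huaX_1]_\SN=2\dM_r\huaX_1$ together with the contribution coming from the change of Lie bracket, which is exactly $2\Theta\omega_1$ by the formula for $\Theta$. Thus the proof amounts simply to observing that the $2$-cocycle equation $\huaD_\TLB(\omega_1,\huaX_1)=0$ repackages \eqref{r-matrix-1} and \eqref{r-matrix-2}, so I expect no genuine obstacle.
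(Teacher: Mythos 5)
Your proposal is correct and follows essentially the same route as the paper: the paper's proof is exactly the observation that \eqref{r-matrix-1} and \eqref{r-matrix-2} together are, via the explicit formula \eqref{eq:dTLBexplicit}, the componentwise vanishing of $\huaD_\TLB(\omega_1,\huaX_1)$. Your extra remarks (the harmless factor $2$ in characteristic $0$ and the $t$-expansion of the Yang--Baxter equation for the deformed bracket) just make explicit the bookkeeping the paper performs when deriving \eqref{r-matrix-2}.
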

    \begin{proof}
      By \eqref{r-matrix-1} and \eqref{r-matrix-2}, we deduce that $(\omega_1,\huaX_1)$ is a $2$-cocycle if and only if $(\omega_1,\huaX_1)$ determines an  infinitesimal  deformation  of the triangular Lie bialgebra $(\g,[\cdot,\cdot]_\g,r)$.
    \end{proof}

If two infinitesimal deformations of a triangular Lie bialgebra $(\g,[\cdot,\cdot]_\g,r)$ corresponding to $(\omega_1,\huaX_1)$ and $(\omega_1',\huaX_1')$ are equivalent, then there exists a triangular $\R$-Lie bialgebra isomorphism $\phi$ from $(\R\otimes_\K\g,[\cdot,\cdot]_\g+t\omega_1',R+t\huaX_1')$ to $(\R\otimes_\K\g,[\cdot,\cdot]_\g+t\omega_1,R+t\huaX_1)$. By \eqref{triangular-equivalent-deformation-2}, we deduce that
   \begin{eqnarray}
   \phi=\Id_\g+tN,\quad \mbox{where} \quad N\in\gl(\g).
   \end{eqnarray}
Since $\Id_\g+tN$ is an   isomorphism from $(\R\otimes_\K\g,[\cdot,\cdot]_\g+t\omega_1')$ to $(\R\otimes_\K\g,[\cdot,\cdot]_\g+t\omega_1)$, we get
    \begin{equation}
      \label{triangular-eq:equmor1} \omega_1'-\omega_1=\dM_\CE N.
    \end{equation}
    By the equality $\big((\Id_\g+tN)\otimes(\Id_\g+tN)\big)(r+t\huaX_1')=(r+t\huaX_1)$, we obtain
    \begin{equation}
     \label{triangular-eq:equmor3}  \huaX_1'-\huaX_1=-(\Id_\g\otimes N+N\otimes \Id_\g)(r)=\Theta N.
    \end{equation}

\begin{thm}
    There is a one-to-one correspondence between the space of equivalence classes of infinitesimal deformations of  $(\g,[\cdot,\cdot]_\g,r)$ and the second cohomology group $\huaH^2(\g,r).$
    \end{thm}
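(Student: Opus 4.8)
The plan is to mirror the argument used above for relative Rota-Baxter Lie algebras. By the preceding proposition, the assignment sending an infinitesimal deformation of $(\g,[\cdot,\cdot]_\g,r)$ to the pair $(\omega_1,\huaX_1)\in\frkC^2_{\TLB}(\g,r)$ of \eqref{rota-baxter-infinitesimal-deformation} restricts to a bijection between infinitesimal deformations and $2$-cocycles, i.e. pairs satisfying $\huaD_{\TLB}(\omega_1,\huaX_1)=0$. So all that remains is to prove that two infinitesimal deformations are equivalent precisely when their $2$-cocycles are cohomologous, and then to pass to cohomology classes.

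First I would establish the ``only if'' direction. Assume the infinitesimal deformations attached to $(\omega_1,\huaX_1)$ and $(\omega_1',\huaX_1')$ are equivalent. As recorded above, the intertwining triangular $\R$-Lie bialgebra isomorphism has the form $\phi=\Id_\g+tN$ with $N\in\gl(\g)$; spelling out that $\phi$ is a Lie algebra isomorphism and that it carries one $r$-matrix to the other gives exactly \eqref{triangular-eq:equmor1} and \eqref{triangular-eq:equmor3}, namely $\omega_1'-\omega_1=\dM_\CE N$ and $\huaX_1'-\huaX_1=\Theta N$. Since the degree-one cochains form $\frkC^1_{\TLB}(\g,r)=\Hom(\g,\g)$, with no $\wedge^1\g$-summand, the explicit formula \eqref{eq:dTLBexplicit} specializes to $\huaD_{\TLB}(N)=(\dM_\CE N,\Theta N)$, and therefore $(\omega_1',\huaX_1')-(\omega_1,\huaX_1)=\huaD_{\TLB}(N)$, so the two $2$-cocycles are cohomologous.

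For the converse, suppose $(\omega_1',\huaX_1')-(\omega_1,\huaX_1)=\huaD_{\TLB}(N)$ for some $N\in\Hom(\g,\g)$, and put $\phi=\Id_\g+tN$. Each of the computations leading to \eqref{triangular-eq:equmor1} and \eqref{triangular-eq:equmor3} is reversible: \eqref{triangular-eq:equmor1} is equivalent to $\phi$ being a Lie algebra isomorphism from $(\R\otimes_\K\g,[\cdot,\cdot]_\g+t\omega_1')$ to $(\R\otimes_\K\g,[\cdot,\cdot]_\g+t\omega_1)$, and \eqref{triangular-eq:equmor3} is equivalent to $(\phi\otimes\phi)(r+t\huaX_1')=r+t\huaX_1$; together these say $\phi$ is a triangular $\R$-Lie bialgebra isomorphism between the two infinitesimal deformations. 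Condition \eqref{triangular-equivalent-deformation-2} is automatic because $\epsilon(t)=0$. Hence equivalence of infinitesimal deformations coincides with cohomology of $2$-cocycles, and the bijection of the preceding proposition descends to a bijection between equivalence classes of infinitesimal deformations and $\huaH^2_{\TLB}(\g,r)$.

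The step I expect to require the most care is the identification of $\huaD_{\TLB}$ on degree-one cochains, and in particular the verification that the derived-bracket quantity $\Theta N=\Psi^{-1}\big(h_{r^\sharp}(N,N^\star)\big)$ agrees with the expression $-(\Id_\g\otimes N+N\otimes\Id_\g)(r)$ occurring in \eqref{triangular-eq:equmor3}. This is the $n=1$ instance of the lemma computing $\Theta$, together with the compatibility of $\frki$ and $\frkp$ with the differentials that was used to show that $\huaD_{\TLB}$ is well defined; once it is in hand, well-definedness of the correspondence on classes and its injectivity are purely formal.
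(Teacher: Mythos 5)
Your proof is correct and follows essentially the same route as the paper: it uses the equations $\omega_1'-\omega_1=\dM_\CE N$ and $\huaX_1'-\huaX_1=\Theta N$ derived from an equivalence $\phi=\Id_\g+tN$ to identify equivalence of infinitesimal deformations with the relation $(\omega_1',\huaX_1')-(\omega_1,\huaX_1)=\huaD_\TLB(N)$, and then invokes the preceding proposition identifying deformations with $2$-cocycles. The only difference is that you spell out the converse direction (cohomologous cocycles yield equivalent deformations) and the specialization of $\huaD_\TLB$ to $1$-cochains, which the paper leaves implicit in its "if and only if".
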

   \begin{proof} By \eqref{triangular-eq:equmor1} and \eqref{triangular-eq:equmor3}, we deduce that
    $$
    (\omega_1',\huaX_1')-(\omega_1,\huaX_1)=\huaD_\TLB(N),
    $$
    which implies that $(\omega_1,\huaX_1)$ and $(\omega_1',\huaX_1')$ are in the same cohomology class if and only if the corresponding  infinitesimal deformations of $(\g,[\cdot,\cdot]_\g,r)$ are equivalent.
    \end{proof}

\section{Homotopy relative Rota-Baxter Lie algebras}\label{sec:homotopy}

In this section, we introduce the notion of a homotopy relative Rota-Baxter Lie algebra, which consists of an $L_\infty$-algebra, its representation and a homotopy relative Rota-Baxter operator. We characterize homotopy relative Rota-Baxter operators as \MC elements in a certain $L_\infty$-algebra. We show that strict homotopy relative Rota-Baxter operators induce pre-Lie$_\infty$-algebras.

\subsection{Homotopy relative Rota-Baxter operators on $L_\infty$-algebras }

Denote by $\Hom^n(\bar{\Sym}(V),V)$ the space of degree $n$ linear maps from the graded vector space $\bar{\Sym}(V)=\oplus_{i=1}^{+\infty}\Sym^{i}(V)$ to the $\mathbb Z$-graded vector space $V$. Obviously, an element $f\in\Hom^n(\bar{\Sym}(V),V)$ is the sum of $f_i:\Sym^i(V)\lon V$. We will write  $f=\sum_{i=1}^{+\infty} f_i$.
 Set $C^n(V,V):=\Hom^n(\bar{\Sym}(V),V)$ and
$
C^*(V,V):=\oplus_{n\in\mathbb Z}C^n(V,V).
$
As the graded version of the Nijenhuis-Richardson bracket given in \cite{NR,NR2}, the {\em graded Nijenhuis-Richardson bracket} $[\cdot,\cdot]_{\NR}$ on the graded vector space $C^*(V,V)$ is given
by:
\begin{eqnarray}
[f,g]_{\NR}:=f\bar{\circ} g-(-1)^{mn}g\bar{\circ}f,\,\,\,\,\forall f=\sum_{i=1}^{+\infty} f_i\in C^m(V,V),~g=\sum_{j=1}^{+\infty} g_j\in C^n(V,V),
\label{eq:gfgcirc-lie}
\end{eqnarray}
where $f\bar{\circ}g\in C^{m+n}(V,V)$ is defined by
 \begin{eqnarray}\label{NR-circ}
f\bar{\circ}g&=&\Big(\sum_{i=1}^{+\infty}f_i\Big)\bar{\circ}\Big(\sum_{j=1}^{+\infty}g_j\Big):=\sum_{k=1}^{+\infty}\Big(\sum_{i+j=k+1}f_i\bar{\circ} g_j\Big),
\end{eqnarray}
while $f_i\bar{\circ} g_j\in \Hom(\Sym^{i+j-1}(V),V)$ is defined by
\begin{eqnarray}\label{graded-NR}
(f_i\bar{\circ} g_j)(v_1,\cdots,v_{i+j-1})
:=\sum_{\sigma\in\mathbb S_{(j,i-1)}}\varepsilon(\sigma)f_i(g_j(v_{\sigma(1)},\cdots,v_{\sigma(j)}),v_{\sigma(j+1)},\cdots,v_{\sigma(i+j-1)}).
\end{eqnarray}

The following result is well-known and, in fact, can be taken as a definition of an $L_\infty$-algebra.
\begin{thm}\label{graded-Nijenhuis-Richardson-bracket}
With the above notation, $(C^*(V,V),[\cdot,\cdot]_{\NR})$ is a graded Lie algebra. Its \MC elements $\sum_{k=1}^{+\infty}l_k$ are the $L_\infty$-algebra structures on $V$.\noproof
\end{thm}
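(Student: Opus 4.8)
The plan is to reduce both assertions to the classical description of $L_\infty$-structures as square-zero coderivations, in exact parallel with the ungraded situation recalled in Remark~\ref{NR-coder}.

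For the graded Lie algebra structure, I would first recall that every $f=\sum_{i\ge 1}f_i\in C^n(V,V)$ extends uniquely to a degree-$n$ coderivation $\hat f$ of the cofree conilpotent cocommutative coalgebra $\bar{\Sym}^c(V)$, characterized by the corestriction $\pr_V\circ\hat f|_{\bar{\Sym}(V)}=f$, and that $f\mapsto\hat f$ is a linear isomorphism $C^*(V,V)\stackrel{\sim}{\longrightarrow}\Coder(\bar{\Sym}^c(V))$. The key point is that under this isomorphism the operation $\bar{\circ}$ of \eqref{NR-circ}--\eqref{graded-NR} computes the corestriction of the composite $\hat f\circ\hat g$: unravelling the Leibniz rule for coderivations of a cofree cocommutative coalgebra produces exactly the shuffle sums and Koszul signs appearing in \eqref{graded-NR}. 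Granting this, $[\cdot,\cdot]_{\NR}$ is identified with the graded commutator of coderivations; since the graded commutator of two coderivations is again a coderivation and the graded commutator on any associative graded algebra satisfies graded antisymmetry and the graded Jacobi identity, $(C^*(V,V),[\cdot,\cdot]_{\NR})$ is a graded Lie algebra. (Alternatively, one may verify graded antisymmetry and the graded Jacobi identity directly from \eqref{eq:gfgcirc-lie}--\eqref{graded-NR}: antisymmetry is immediate from the definition, and the Jacobi identity follows once the nested terms cancel in pairs, leaving a bookkeeping of $(i_1,i_2,i_3)$-shuffles with Koszul signs.)

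For the \MC characterization, note that $[\cdot,\cdot]_{\NR}$ carries no differential, so over a field of characteristic $0$ an \MC element is precisely an $\ell\in C^1(V,V)$ with $[\ell,\ell]_{\NR}=0$, i.e. with $\ell\bar{\circ}\ell=0$, equivalently (via the isomorphism above) a degree-$1$ coderivation $\hat\ell$ of $\bar{\Sym}^c(V)$ with $\hat\ell^{\,2}=0$. Writing $\ell=\sum_{k\ge 1}l_k$ with $l_k\colon\Sym^k(V)\to V$ of degree $1$, axiom (i) of an $L_\infty$-algebra (graded symmetry) is automatic, since each $l_k$ is by construction defined on the \emph{symmetric} power $\Sym^k(V)$. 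For axiom (ii), decompose $\ell\bar{\circ}\ell$ by arity using \eqref{NR-circ}: its component landing on $\Sym^n(V)$ is $\sum_{i+j=n+1}l_i\bar{\circ}l_j$, and by \eqref{graded-NR}
\[
\Big(\sum_{i+j=n+1}l_i\bar{\circ}l_j\Big)(v_1,\cdots,v_n)=\sum_{j=1}^{n}\sum_{\sigma\in\mathbb S_{(j,n-j)}}\varepsilon(\sigma)\,l_{n-j+1}\big(l_j(v_{\sigma(1)},\cdots,v_{\sigma(j)}),v_{\sigma(j+1)},\cdots,v_{\sigma(n)}\big),
\]
which is exactly the left-hand side of the generalized Jacobi identity in the definition of an $L_\infty$-algebra (after renaming the summation index $j$ to $i$). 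Hence $\ell\bar{\circ}\ell=0$ holds if and only if all the relations (ii) hold, i.e. if and only if $\{l_k\}_{k\ge 1}$ is an $L_\infty$-structure on $V$; thus the \MC elements of $(C^*(V,V),[\cdot,\cdot]_{\NR})$ are precisely the $L_\infty$-algebra structures on $V$.

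The only genuine work is the identity that $\bar{\circ}$ computes corestricted composition of coderivations, i.e. the precise match between the shuffle/Koszul-sign combinatorics of \eqref{graded-NR} and the comultiplication and Leibniz rule on $\bar{\Sym}^c(V)$; all sign conventions (Koszul signs from the $\mathbb Z$-grading on $V$, and, if one prefers to work with degree-$1$ maps and the coalgebra $\bar{\Sym}^c(s^{-1}V)$ as in Lemma~\ref{Quillen-construction}, the desuspension signs as well) must be tracked consistently. Since this identification is classical, I would cite it from \cite{LS,LM,stasheff:shla,NR,NR2} (see also Remark~\ref{NR-coder} for the ungraded shadow) rather than reproduce the computation; the remainder of the argument is then purely formal.
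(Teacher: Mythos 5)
Your proof is correct, and since the paper states this theorem without proof as a well-known fact (it is marked as such and can even be taken as the definition of an $L_\infty$-algebra), your route is precisely the standard coderivation argument the paper itself alludes to in Remark \ref{NR-coder} and its references \cite{LS,LM,NR,NR2}. Both halves are accurately carried out: the identification of $[\cdot,\cdot]_{\NR}$ with the graded commutator of coderivations of $\bar{\Sym}^c(V)$ gives the graded Lie algebra structure, and the arity-by-arity comparison of $\ell\bar{\circ}\ell=0$ with the generalized Jacobi identity (graded symmetry being automatic for maps defined on $\bar{\Sym}(V)$) gives the Maurer-Cartan characterization.
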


\begin{defi}{\rm (\cite{LM})}
A {\em representation} of an $L_\infty$-algebra $(\g,\{l_k\}_{k=1}^{+\infty})$ on a graded vector space $V$ consists of linear maps $\rho_k:\Sym^{k-1}(\g)\otimes V\lon V$, $k\geq 1$, of degree $1$ with the property that, for any homogeneous elements $x_1,\cdots,x_{n-1}\in \g,~v\in V$, we have
\begin{eqnarray}\label{sh-Lie-rep}
&&\sum_{i=1}^{n-1}\sum_{\sigma\in \mathbb S_{(i,n-i-1)} }\varepsilon(\sigma)\rho_{n-i+1}(l_i(x_{\sigma(1)},\cdots,x_{\sigma(i)}),x_{\sigma(i+1)},\cdots,x_{\sigma(n-1)},v)\\
\nonumber&&+\sum_{i=1}^{n}\sum_{\sigma\in \mathbb S_{(n-i,i-1)} }\varepsilon(\sigma)(-1)^{x_{\sigma(1)}+\cdots+x_{\sigma(n-i)}}\rho_{n-i+1}(x_{\sigma(1)},\cdots,x_{\sigma(n-i)},\rho_i(x_{\sigma(n-i+1)},\cdots,x_{\sigma(n-1)},v))=0.
\end{eqnarray}
\end{defi}

Let $(V,\{\rho_k\}_{k=1}^{+\infty})$ be a representation of an $L_\infty$-algebra $(\g,\{l_k\}_{k=1}^{+\infty})$. There is an $L_\infty$-algebra structure on the direct sum $\g\oplus V$ given by
\begin{eqnarray*}
l_k\big((x_1,v_1),\cdots,(x_k,v_k)\big):=\big(l_k(x_1,\cdots,x_k),\sum_{i=1}^{k}(-1)^{x_i(x_{i+1}+\cdots+x_k)}\rho_k(x_1,\cdots,x_{i-1},x_{i+1},\cdots,x_k,v_i)\big).
\end{eqnarray*}
This $L_\infty$-algebra is called the
{\em semidirect product} of the $L_\infty$-algebra $(\g,\{l_k\}_{k=1}^{+\infty})$ and $(V,\{\rho_k\}_{k=1}^{+\infty})$, and denoted by $\g\ltimes_{\rho}V$.

Now we are ready to define our  main object of study in this section.

\begin{defi}\label{de:homoop}
Let $(V,\{\rho_k\}_{k=1}^{+\infty})$ be a representation of an $L_\infty$-algebra $(\g,\{l_k\}_{k=1}^{+\infty})$. A degree $0$ element $T=\sum_{k=1}^{+\infty}T_k\in \Hom(\bar{\Sym}(V),\g)$ with $T_k\in \Hom(\Sym^k(V),\g)$ is called a {\em  homotopy relative Rota-Baxter operator} on an $L_\infty$-algebra $(\g,\{l_k\}_{k=1}^{+\infty})$ with respect to the representation $(V,\{\rho_k\}_{k=1}^{+\infty})$ if the following equalities hold for all $p\geq 1$ and all homogeneous elements $v_1,\cdots,v_p\in V$,

\begin{eqnarray}
\nonumber
\label{full-homotopy-rota-baxter-o}&&\sum_{k_1+\cdots+k_m=t\atop 1\le t\le p-1}\sum_{\sigma\in \mathbb S_{(k_1,\cdots,k_m,1,p-1-t)}}\frac{\varepsilon(\sigma)}{m!}\cdot\\
\nonumber &&T_{p-t}\Big(\rho_{m+1}\Big(T_{k_1}\big(v_{\sigma(1)},\cdots,v_{\sigma(k_1)}\big),\cdots,T_{k_m}\big(v_{\sigma(k_1+\cdots+k_{m-1}+1)},\cdots,v_{\sigma(t)}\big),v_{\sigma(t+1)}\Big),v_{\sigma(t+2)},\cdots,v_{\sigma(p)}\Big)\\
&=&\sum_{k_1+\cdots+k_n=p}\sum_{\sigma\in \mathbb S_{(k_1,\cdots,k_n)}}\frac{\varepsilon(\sigma)}{n!}l_n\Big(T_{k_1}\big(v_{\sigma(1)},\cdots,v_{\sigma(k_1)}\big),\cdots,T_{k_n}\big(v_{\sigma(k_1+\cdots+k_{n-1}+1)},\cdots,v_{\sigma(p)}\big)\Big).
\nonumber
\end{eqnarray}

\end{defi}

A homotopy relative Rota-Baxter operator on an  $L_\infty$-algebra is a generalization of an $\huaO$-operator on a Lie $2$-algebra introduced in \cite{Sheng}.

\begin{defi}
Let $(\g,\{l_k\}_{k=1}^{+\infty})$ be an $L_\infty$-algebra. A degree $0$ element $T=\sum_{k=1}^{+\infty}T_k\in \Hom(\bar{\Sym}(\g),\g)$ with $T_k\in \Hom(\Sym^k(\g),\g)$ is called a {\em  homotopy Rota-Baxter operator } on an $L_\infty$-algebra $(\g,\{l_k\}_{k=1}^{+\infty})$ if the following equalities hold for all $p\geq 1$ and all homogeneous elements $x_1,\cdots,x_p\in \g$,
\begin{eqnarray}
\nonumber
&&\sum_{k_1+\cdots+k_m=t\atop 1\le t\le p-1}\sum_{\sigma\in \mathbb S_{(k_1,\cdots,k_m,1,p-1-t)}}\frac{\varepsilon(\sigma)}{m!}\cdot\\
\nonumber&&T_{p-t}\Big(l_{m+1}\Big(T_{k_1}\big(x_{\sigma(1)},\cdots,x_{\sigma(k_1)}\big),\cdots,T_{k_m}\big(x_{\sigma(k_1+\cdots+k_{m-1}+1)},\cdots,x_{\sigma(t)}\big),x_{\sigma(t+1)}\Big),x_{\sigma(t+2)},\cdots,x_{\sigma(p)}\Big)\\
&=&\sum_{k_1+\cdots+k_n=p}\sum_{\sigma\in \mathbb S_{(k_1,\cdots,k_n)}}\frac{\varepsilon(\sigma)}{n!}l_n\Big(T_{k_1}\big(x_{\sigma(1)},\cdots,x_{\sigma(k_1)}\big),\cdots,T_{k_n}\big(x_{\sigma(k_1+\cdots+k_{n-1}+1)},\cdots,x_{\sigma(p)}\big)\Big).
\nonumber
\end{eqnarray}

\end{defi}

\begin{rmk}
A homotopy Rota-Baxter operator $T=\sum_{k=1}^{+\infty}T_k\in \Hom(\bar{\Sym}(\g),\g)$ on an $L_\infty$-algebra $(\g,\{l_k\}_{k=1}^{+\infty})$ is a homotopy relative Rota-Baxter operator with respect to the adjoint representation.
If moreover the $L_\infty$-algebra reduces to a Lie algebra $(\g,[\cdot,\cdot]_\g)$, then the resulting linear operator $T:\g\longrightarrow \g$ is a {\em Rota-Baxter operator}.
\end{rmk}

\begin{defi}\label{defi:homotopy-O}
\begin{enumerate}
\item[\rm(i)] An $L_\infty$-algebra $(\g,\{l_k\}_{k=1}^{+\infty})$ with a homotopy Rota-Baxter operator $T=\sum_{k=1}^{+\infty}T_k\in \Hom(\bar{\Sym}(\g),\g)$ is
called a {\em homotopy Rota-Baxter Lie algebra}. We denote it by $\big(\g,\{l_k\}_{k=1}^{+\infty},\{T_k\}_{k=1}^{+\infty}\big)$.
\item[\rm(ii)] A {\em homotopy relative Rota-Baxter Lie algebra} is a triple $\big((\g,\{l_k\}_{k=1}^{+\infty}),\{\rho_k\}_{k=1}^{+\infty},\{T_k\}_{k=1}^{+\infty}\big)$, where $(\g,\{l_k\}_{k=1}^{+\infty})$ is an $L_\infty$-algebra, $(V,\{\rho_k\}_{k=1}^{+\infty})$ is a representation of $\g$ on a graded vector space $V$ and $T=\sum_{k=1}^{+\infty}T_k\in \Hom(\bar{\Sym}(V),\g)$ is a  homotopy relative Rota-Baxter operator.
\end{enumerate}
\end{defi}

A representation of an $L_\infty$-algebra will give rise to a V-data as well as an $L_\infty$-algebra that characterize homotopy relative Rota-Baxter operators  as \MC elements.

\begin{pro}
Let $(\g,\{l_k\}_{k=1}^{+\infty})$ be an $L_\infty$-algebra and $(V,\{\rho_k\}_{k=1}^{+\infty})$ a representation of $(\g,\{l_k\}_{k=1}^{+\infty})$. Then the following quadruple forms a V-data:
\begin{itemize}
\item[$\bullet$] the graded Lie algebra $(L,[\cdot,\cdot])$ is given by $(C^*(\g\oplus V,\g\oplus V),[\cdot,\cdot]_{\NR})$;
\item[$\bullet$] the abelian graded Lie subalgebra $\h$ is given by $\h:=\oplus_{n\in\mathbb Z}\Hom^n(\bar{\Sym}(V),\g);$
\item[$\bullet$] $P:L\lon L$ is the projection onto the subspace $\h$;
\item[$\bullet$] $\Delta=\sum_{k=1}^{+\infty}(l_k+\rho_k)$.
\end{itemize}

Consequently, $(\h,\{\frkl_k\}_{k=1}^{+\infty})$ is an $L_\infty$-algebra, where $\frkl_k$ is given by
\eqref{V-shla}.
\end{pro}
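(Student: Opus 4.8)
The plan is to verify directly the four defining conditions of a $V$-data for the quadruple $(L,\h,P,\Delta)$, after which the asserted $L_\infty$-structure on $\h$ is an immediate application of Theorem~\ref{thm:db}. That $(L,[\cdot,\cdot])=(C^*(\g\oplus V,\g\oplus V),[\cdot,\cdot]_{\NR})$ is a graded Lie algebra is precisely Theorem~\ref{graded-Nijenhuis-Richardson-bracket} applied to the graded vector space $\g\oplus V$.

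For the second condition, I would observe that an element of $\h$ is a linear map $\bar{\Sym}(V)\to\g\hookrightarrow\g\oplus V$, i.e. a map that accepts arguments only from $V$ and whose values lie in $\g$. Given $f,g\in\h$, forming $f\bar{\circ}g$ would require inserting the value $g(v_1,\cdots)\in\g$ into an argument slot of $f$, which accepts only elements of $V$; hence $f\bar{\circ}g=0$, and likewise $g\bar{\circ}f=0$, so $[f,g]_{\NR}=0$. Closure of $\h$ under addition is obvious, so $\h$ is an abelian graded Lie subalgebra. (This is a graded analogue of Lemma~\ref{Zero-condition-2}; one could alternatively introduce a bigrading on $C^*(\g\oplus V,\g\oplus V)$ exactly as in Section~\ref{ss:mce}, under which $\h$ is the sum of the components of bidegree $(-1)|{*}$, and invoke the evident graded version of Lemma~\ref{important-lemma-2}.)

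For the third condition, $P$ is the projection onto $\h$ along the span of the remaining components, so $P\circ P=P$ and $\Img P=\h$ are immediate. The kernel $\ker P$ consists of those $f\in C^*(\g\oplus V,\g\oplus V)$ with $f(\Sym^n(V))\subseteq V$ for all $n\ge1$; tracking the origin of the arguments as above shows that if $f,g\in\ker P$, then in any term of $f\bar{\circ}g$ or $g\bar{\circ}f$ evaluated on elements of $V$ every argument fed into $f$ and $g$ already lies in $V$, so the outputs again lie in $V$ and $[f,g]_{\NR}\in\ker P$; thus $\ker P$ is a graded Lie subalgebra. Finally, $\Delta=\sum_{k\ge1}(l_k+\rho_k)$ is, by construction, the element of $C^1(\g\oplus V,\g\oplus V)$ whose restriction to $\Sym^k(\g)$ is $l_k$, whose restriction to $\Sym^{k-1}(\g)\otimes V$ is $\rho_k$, and which vanishes on all remaining components; that is, $\Delta$ encodes precisely the semidirect-product $L_\infty$-structure $\g\ltimes_{\rho}V$. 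By Theorem~\ref{graded-Nijenhuis-Richardson-bracket} this MC element satisfies $\tfrac12[\Delta,\Delta]_{\NR}=0$, hence $[\Delta,\Delta]=0$; and since each $l_k$ takes values in $\g$ but requires at least one argument from $\g$ while each $\rho_k$ takes values in $V$, no component of $\Delta$ is of the form $\Sym^n(V)\to\g$, so $P\Delta=0$ and $\Delta\in\ker(P)^1$.

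This completes the verification that $(L,\h,P,\Delta)$ is a $V$-data, and Theorem~\ref{thm:db} then yields the $L_\infty$-algebra $(\h,\{\frkl_k\}_{k=1}^{+\infty})$ with $\frkl_k$ given by~\eqref{V-shla}. The only point demanding genuine care is the identification of $\Delta$ with the semidirect-product structure together with the correct Koszul signs in that correspondence; the remaining arguments are routine bookkeeping with where the inputs of a composition come from.
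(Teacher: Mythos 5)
Your proposal is correct and follows essentially the same route as the paper: verify the $V$-data axioms directly (abelianness of $\h$ and closure of $\ker P$ via bookkeeping of where arguments come from in the graded Nijenhuis--Richardson composition, and $[\Delta,\Delta]_{\NR}=0$, $P\Delta=0$ because $\Delta$ is the semidirect-product $L_\infty$-structure on $\g\oplus V$), then invoke Theorem~\ref{thm:db}. Your write-up merely spells out the argument-tracking details that the paper leaves implicit.
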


\begin{proof}
By Theorem \ref{graded-Nijenhuis-Richardson-bracket}, we obtain that $(C^*(\g\oplus V,\g\oplus V),[\cdot,\cdot]_{\NR})$ is a graded Lie algebra. Moreover, by \eqref{graded-NR} we deduce that $\Img P=\h$ is an abelian graded Lie subalgebra and $\ker P$ is a graded Lie subalgebra. Since $\Delta=\sum_{k=1}^{+\infty}(l_k+\rho_k)$ is the semidirect product  $L_\infty$-algebra structure on $\g\oplus V$, we have $[\Delta,\Delta]_{\NR}=0$ and $P(\Delta)=0$. Thus $(L,\h,P,\Delta)$ is a V-data. Hence by Theorem \ref{thm:db}, we obtain the higher derived brackets $\{{\frkl_k}\}_{k=1}^{+\infty}$ on the abelian graded Lie subalgebra $\h$.
\end{proof}

Moreover, for all $n\ge 1$, we set
\begin{eqnarray}\label{filtration-homotopy-rota-baxter}
\quad\huaF_n(\h)=\Pi_{i=n}^{+\infty}\Hom(\Sym^i(V),\g).
\end{eqnarray}

\begin{lem}\label{filtered-homotopy-lie-of-rota-baxter}
With above notation, $(\h,\{\frkl_k\}_{k=1}^{+\infty})$ is a weakly filtered $L_\infty $-algebra.
\end{lem}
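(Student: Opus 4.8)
The plan is to check directly the two requirements of Definition~\ref{def:weakly} for the filtration $\huaF_\bullet(\h)$ of \eqref{filtration-homotopy-rota-baxter}. First recall that an element of $\h=\oplus_{n\in\mathbb Z}\Hom^n(\bar{\Sym}(V),\g)$ is, by construction, a family $\sum_{i=1}^{+\infty}f_i$ with $f_i\in\Hom(\Sym^i(V),\g)$; thus in each homological degree $\h$ is the \emph{product} $\Pi_{i=1}^{+\infty}\Hom(\Sym^i(V),\g)$, and $\huaF_n(\h)=\Pi_{i=n}^{+\infty}\Hom(\Sym^i(V),\g)$ consists of the families with $f_i=0$ for $i<n$. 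Hence $\huaF_\bullet(\h)$ is a descending filtration with $\huaF_1(\h)=\h$, and completeness (condition~(ii)) is automatic: $\h/\huaF_n(\h)\cong\Pi_{i=1}^{n-1}\Hom(\Sym^i(V),\g)$, the structure maps of the inverse system are the obvious truncations, and $\invlim \h/\huaF_n(\h)=\Pi_{i=1}^{+\infty}\Hom(\Sym^i(V),\g)=\h$.

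For condition~(i) I claim that one may take $n=1$, i.e. $\frkl_k(\h,\dots,\h)\subset\huaF_k(\h)$ for all $k\geq 1$. The mechanism is a weight bookkeeping for the graded Nijenhuis--Richardson bracket parallel to the bidegree bookkeeping of Section~\ref{ss:mce}. Using $\Sym(\g\oplus V)=\bigoplus_{p,q\geq0}\Sym^p(\g)\otimes\Sym^q(V)$, decompose $C^*(\g\oplus V,\g\oplus V)$ according to the number of $\g$-arguments, the number of $V$-arguments, and the target ($\g$ or $V$); for a bi-homogeneous map $F$ let $q(F)$ be its number of $V$-arguments and set $w(F)=q(F)$ if $F$ has target $\g$, and $w(F)=q(F)-1$ if $F$ has target $V$. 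Inspecting \eqref{graded-NR} shows, exactly as for Lemma~\ref{important-lemma-2}, that $w$ is additive under the bracket: $w([F,G]_{\NR})=w(F)+w(G)$. Every summand $l_j$ of $\Delta=\sum_{j\geq1}(l_j+\rho_j)$ has no $V$-arguments and target $\g$, and every $\rho_j$ has one $V$-argument and target $V$, so all components of $\Delta$ have $w=0$; on the other hand $\h$ consists exactly of the bi-homogeneous maps with no $\g$-arguments and target $\g$, so an element of $\Hom(\Sym^m(V),\g)\subset\h$ has $w=m$.

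Consequently, if $a^{(1)},\dots,a^{(k)}\in\h$ are homogeneous with $a^{(s)}$ of symmetric degree $m_s\geq1$, then every bi-homogeneous component of the iterated bracket $[\cdots[[\Delta,a^{(1)}],a^{(2)}],\cdots,a^{(k)}]$ has weight $m_1+\cdots+m_k$. Applying the projection $P$ onto $\h$ retains only the components with no $\g$-arguments and target $\g$, and for such a component the weight equals its number of $V$-arguments; hence $\frkl_k(a^{(1)},\dots,a^{(k)})=P[\cdots[[\Delta,a^{(1)}],a^{(2)}],\cdots,a^{(k)}]$ lies in $\Hom(\Sym^{m_1+\cdots+m_k}(V),\g)$. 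Since $m_1+\cdots+m_k\geq k$, this lies in $\huaF_k(\h)$, and by multilinearity $\frkl_k(\h,\dots,\h)\subset\huaF_k(\h)$ for every $k\geq1$, which is condition~(i) with $n=1$.

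The lemma is thus essentially bookkeeping; the only point not entirely formal is the additivity $w([F,G]_{\NR})=w(F)+w(G)$ --- the graded analogue of Lemma~\ref{important-lemma-2} --- which one checks on each composite $F\bar\circ G$ of \eqref{graded-NR} by distinguishing whether $G$ has target $\g$ (its output occupies a $\g$-slot of $F$) or target $V$ (it occupies a $V$-slot), the count of free arguments against the target balancing in either case; the bracket is then a difference of two such composites. Everything else, in particular that $(\h,\{\frkl_k\}_{k=1}^{+\infty})$ is an $L_\infty$-algebra in the first place, has already been established.
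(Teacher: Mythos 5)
Your argument is correct and is essentially the paper's own: the weight $w$ you introduce is exactly the second component of the bidegree of Lemma~\ref{important-lemma-2}, whose additivity under the (graded) Nijenhuis--Richardson bracket is precisely what the paper invokes to get $\frkl_k(\huaF_{n_1}(\h),\cdots,\huaF_{n_k}(\h))\subset\huaF_{n_1+\cdots+n_k}(\h)\subset\huaF_k(\h)$ and hence condition (i) with $n=1$. You are merely more explicit about the graded analogue of that lemma and about completeness, which the paper leaves implicit.
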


\begin{proof}
By \eqref{filtration-homotopy-rota-baxter}, we have $\h=\huaF_1( \h)\supset\cdots\supset\huaF_n( \h)\supset\cdots$. Moreover, by Lemma \ref{important-lemma-2}, we have
\begin{eqnarray}
\frkl_k(\huaF_{n_1}(\h),\huaF_{n_2}(\h),\cdots,\huaF_{n_k}(\h))\subset\huaF_{n_1+n_2+\cdots+n_k}(\h) \subset\huaF_{k}(\h).
\end{eqnarray}
Thus, we deduce that $\big(\h,\huaF_{\bullet}(\h)\big)$ is a weakly filtered $L_\infty $-algebra with $n=1$.
\end{proof}
\begin{rmk}
	In fact, the above argument shows that $\big(\h,\huaF_{\bullet}(\h)\big)$ is a \emph{filtered} $L_\infty $-algebra in the sense of \cite{Dolgushev-Rogers}.
\end{rmk}
\begin{thm}\label{hmotopy-o-operator-homotopy-lie}
With the above notation, a degree $0$ element  $T=\sum_{k=1}^{+\infty}T_k\in \Hom(\bar{\Sym}(V),\g)$ is a homotopy relative Rota-Baxter operator on $(\g,\{l_k\}_{k=1}^{+\infty})$ with respect to the representation $(V,\{\rho_k\}_{k=1}^{+\infty})$ if and only if $T=\sum_{k=1}^{+\infty}T_k$ is an \MC element of the  $L_\infty$-algebra $(\h,\{{\frkl_k}\}_{k=1}^{+\infty})$.
\end{thm}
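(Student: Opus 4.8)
The plan is to read the \MC equation off Voronov's explicit higher derived brackets and match it, degree by degree in the number of inputs, with the defining identities of Definition~\ref{de:homoop}. Since $\Delta=\sum_{k\ge1}(l_k+\rho_k)$ is the semidirect product $L_\infty$-structure on $\g\oplus V$, its homogeneous components map $\Sym^n(\g)\to\g$ or $\Sym^{n-1}(\g)\otimes V\to V$, so none of them lies in $\h=\Hom(\bar{\Sym}(V),\g)$ and $P(\Delta)=0$. By the defining formula~\eqref{V-shla} for $\frkl_k$, the \MC equation $\sum_{k\ge1}\frac1{k!}\frkl_k(T,\cdots,T)=0$ for $T=\sum_k T_k\in\h^0$ therefore becomes
\[
\sum_{k\ge1}\frac1{k!}\,P\underbrace{[\cdots[[}_{k}\Delta,T]_{\NR},T]_{\NR},\cdots,T]_{\NR}=0.
\]
(Equivalently, as $T$ has degree $0$ the left-hand side equals $P\big(e^{-\ad_T}\Delta\big)$, i.e.\ the $\Hom(\bar{\Sym}(V),\g)$-component of the twisted $L_\infty$-structure $e^{-\ad_T}\Delta$ on $\g\oplus V$.) The series converges termwise on each $\Sym^p(V)$ because $\h$ is weakly filtered with $T\in\huaF_1(\h)$ (Lemma~\ref{filtered-homotopy-lie-of-rota-baxter}), so $\frkl_k(T,\cdots,T)\in\huaF_k(\h)$; concretely, on $v_1,\cdots,v_p$ only the terms with $k\le p$ can be nonzero.

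Next I would expand each iterated bracket and discard what $P$ kills. Since $T$ is $\g$-valued with all inputs in $V$, one has $T\bar\circ T=0$ and $T\bar\circ l_n=0$, and a short induction on the number of brackets shows that the only composites surviving the projection onto $\Hom(\bar{\Sym}(V),\g)$ are of two kinds: (i) all $n$ arguments of some $l_n$ filled by $T$'s, producing $l_n\big(T_{k_1}(\cdots),\cdots,T_{k_n}(\cdots)\big)$, which arises only from the $n$-fold bracket $\frkl_n$; and (ii) all $m$ of the $\g$-inputs of some $\rho_{m+1}$ filled by $T$'s and the resulting $V$-valued expression post-composed with one further $T$, producing $T_{p-t}\big(\rho_{m+1}(T_{k_1}(\cdots),\cdots,T_{k_m}(\cdots),v),\cdots\big)$, which arises only from the $(m+1)$-fold bracket $\frkl_{m+1}$. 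Every other composite --- one with a leftover $\g$-input, or one with a dangling $\rho\bar\circ T$-term whose $V$-output is never post-composed into a $T$ --- is annihilated by $P$ (and composites involving more than a single $\Delta$-component do not occur at all, each bracket being linear in $\Delta$). This is the graded analogue of the bidegree bookkeeping of Lemmas~\ref{important-lemma-2} and~\ref{Zero-condition-2}, and I would run it in the coderivation picture of Remark~\ref{NR-coder} and Theorem~\ref{graded-Nijenhuis-Richardson-bracket}, where the iterated $\NR$-brackets against $T$ unwind into signed sums $\sum_i(-1)^i\binom ki\,\bar{T}^{\circ i}\!\circ\bar\Delta\circ\bar{T}^{\circ(k-i)}$ of composites of coderivations of $\bar{\Sym}^c\big(s^{-1}(\g\oplus V)\big)$, exactly as in the computation of the formula for $h_T$ earlier in the paper.

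Then, fixing the number of inputs $p$ (i.e.\ the $\Hom(\Sym^p(V),\g)$-component) and homogeneous $v_1,\cdots,v_p\in V$, I would collect the surviving contributions. Expanding $T=\sum_k T_k$ and resolving the nested brackets into shuffle sums, the kind-(i) terms assemble, together with the $\frac1{n!}$ from the \MC series, into $\sum_{k_1+\cdots+k_n=p}\sum_{\sigma\in\mathbb S_{(k_1,\cdots,k_n)}}\frac{\varepsilon(\sigma)}{n!}\,l_n\big(T_{k_1}(v_{\sigma(1)},\cdots),\cdots\big)$, the right-hand side of the identity in Definition~\ref{de:homoop}; the kind-(ii) terms assemble into the left-hand side, the $\frac1{(m+1)!}$ from the \MC series combining with the $m+1$ possible positions of the $T$ performing the post-composition to leave the factor $\frac1{m!}$. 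Thus the \MC equation restricted to a fixed number $p$ of inputs is precisely the Definition~\ref{de:homoop} identity for that $p$, and ranging over $p\ge1$ gives the equivalence in both directions.

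The step I expect to be the main obstacle is the sign-and-coefficient bookkeeping in the last paragraph: each bracket $[\,\cdot\,,T]_{\NR}$ contributes a sum over $(j,\cdot)$-shuffles together with a Koszul sign depending on the degree of the intermediate map, and one must check that the cumulative sign and multinomial coefficient produced by the $k$-fold nested bracket, divided by $k!$, reproduce exactly the weights $\frac{\varepsilon(\sigma)}{n!}$ and $\frac{\varepsilon(\sigma)}{m!}$ of Definition~\ref{de:homoop}. Passing to coderivations keeps this under control, since the $k$-fold bracket of $\Delta$ against $T$ telescopes as in the $h_T$ computation and, evaluated on a symmetric word of length $p$, the binomial coefficients cancel against the $\frac1{k!}$ to yield the stated normalizations; this is routine but is exactly where the care is needed.
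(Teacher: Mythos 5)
Your proposal is correct and follows essentially the same route as the paper's proof: rewrite the \MC equation as $\bar P\sum_{n\ge1}\frac1{n!}[\cdots[\bar l+\bar\rho,\bar T],\cdots,\bar T]=0$ in the coderivation picture, expand each iterated bracket into the binomial sum $\sum_i(-1)^i\binom ni\,\bar T^{\circ i}\circ(\bar l+\bar\rho)\circ\bar T^{\circ(n-i)}$, observe that after projection onto $\Hom(\bar{\Sym}(V),\g)$ only the $\bar l\circ\bar T^{\circ n}$ and $-n\,\bar T\circ\bar\rho\circ\bar T^{\circ(n-1)}$ terms survive, and match the resulting shuffle sums and the coefficients $\frac{\varepsilon(\sigma)}{n!}$, $\frac{\varepsilon(\sigma)}{m!}$ with Definition \ref{de:homoop}. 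Your identification of the surviving composites and of where the factor $m+1=\binom{m+1}{1}$ cancels is exactly the paper's bookkeeping, so the remaining "obstacle" you flag is just the routine verification the paper itself carries out.
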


\begin{proof}
By Remark \ref{NR-coder}, we will view the elements of $C^*(\g\oplus V,\g\oplus V)$ as coderivations of $\bar{\Sym}^c(\g\oplus V)$. Moreover,  we view $\oplus_{n\in\mathbb Z}\Hom^n(\bar{\Sym}(V),\g)$ as an abelian graded Lie subalgebra of the graded Lie algebra $\Coder(\bar{\Sym}^c(\g\oplus V))$ and we denote by $\bar{P}$ the projection onto this  Lie subalgebra. The coderivations of $\bar{\Sym}^c(\g\oplus V)$ corresponding to $\sum_{k=1}^{+\infty}l_k,~ \sum_{k=1}^{+\infty}\rho_k$ and $\sum_{k=1}^{+\infty}T_k$ will be denoted by $\bar{l},~\bar{\rho}$ and $\bar{T}$ respectively. Then $T=\sum_{k=1}^{+\infty}T_k$ is an \MC element of the $L_\infty$-algebra $(\h,\{{\frkl_k}\}_{k=1}^{+\infty})$ if and only if
 \begin{equation}\label{mc-homotopy-o}
 \bar{P}\sum_{n=1}^{+\infty}\frac{1}{n!}\underbrace{[\cdots[[}_{ n}\bar{l}+\bar{\rho},\bar{T}],\bar{T}],\cdots,\bar{T}]=0.
 \end{equation}
 In fact, we have
 \begin{eqnarray*}
\underbrace{[\cdots[[}_{ n}\bar{l}+\bar{\rho},\bar{T}],\bar{T}],\cdots,\bar{T}]=\sum_{i=0}^{n}(-1)^{i}{n\choose i}\big(\underbrace{\bar{T}\circ\cdots\circ\bar{T}}_i\circ (\bar{l}+\bar{\rho})\circ \underbrace{\bar{T}\circ \cdots\circ \bar{T}}_{n-i}\big).
 \end{eqnarray*}
We denote by $\pr_\g$ the natural projections from $\bar{\Sym}(\g\oplus V)$ to $\g$. Thus, for all $v_1,\cdots,v_{p}\in V$, we have
 \begin{eqnarray*}
 &&\big(\pr_\g\circ\underbrace{[\cdots[[}_{ n}\bar{l}+\bar{\rho},\bar{T}],\bar{T}],\cdots,\bar{T}]\big)(v_1,\cdots,v_{p})\\
 &=&\big(\pr_\g\circ\bar{l}\circ \underbrace{\bar{T}\cdots\circ \bar{T}}_{n}\big)(v_1,\cdots,v_{p})-n\big(\pr_\g\circ\bar{T}\circ\bar{\rho}\circ\underbrace{\bar{T}\cdots\circ \bar{T}}_{n-1}\big)(v_1,\cdots,v_{p}).
 \end{eqnarray*}
 By \eqref{graded-NR}, we obtain that
 \begin{eqnarray*}
 &&\big(\pr_\g\circ\bar{l}\circ \underbrace{\bar{T}\cdots\circ \bar{T}}_{n}\big)(v_1,\cdots,v_{p})\\
 &=&\sum_{k_1+\cdots+k_n=p}\sum_{\sigma\in \mathbb S_{(k_1,\cdots,k_n)}}\varepsilon(\sigma)l_n\Big(T_{k_1}\big(v_{\sigma(1)},\cdots,v_{\sigma(k_1)}\big),\cdots,T_{k_n}\big(v_{\sigma(k_1+\cdots+k_{n-1}+1)},\cdots,v_{\sigma(p)}\big)\Big)
 \end{eqnarray*}
and

 \begin{eqnarray*}
 &&n\big(\pr_\g\circ\bar{T}\circ\bar{\rho}\circ\underbrace{\bar{T}\cdots\circ \bar{T}}_{n-1}\big)(v_1,\cdots,v_{p})=n\sum_{k_1+\cdots+k_{n-1}=t\atop 1\le t\le p-1}\sum_{\tau\in \mathbb S_{(k_1,\cdots,k_{n-1},p-t)}}\varepsilon(\tau)\cdot\\&&\Big(\pr_\g\circ\bar{T}\circ\bar{\rho}\Big)\Big(T_{k_1}\big(v_{\tau(1)},\cdots,v_{\tau(k_1)}\big),\cdots,T_{k_{n-1}}\big(v_{\tau(k_1+\cdots+k_{n-2}+1)},\cdots,v_{\tau(t)}\big),v_{\tau(t)+1)},\cdots,v_{\tau(p)}\Big)\\
 &=&n\sum_{k_1+\cdots+k_{n-1}=t\atop 1\le t\le p-1}\sum_{\sigma\in \mathbb S_{(k_1,\cdots,k_{n-1},1,p-1-t)}}\varepsilon(\sigma)\cdot\\
 &&T_{p-t}\Big(\rho_n\Big(T_{k_1}\big(v_{\sigma(1)},\cdots,v_{\sigma(k_1)}\big),\cdots,T_{k_{n-1}}\big(v_{\sigma(k_1+\cdots+k_{n-2}+1)},\cdots,v_{\sigma(t)}\big),v_{\sigma(t+1)}\Big),v_{\sigma(t+2)},\cdots,v_{\sigma(p)}\Big).
 \end{eqnarray*}

Thus,  \eqref{mc-homotopy-o} holds if and only if $T=\sum_{k=1}^{+\infty}T_k\in \Hom(\bar{\Sym}(V),\g)$ is a homotopy relative Rota-Baxter operator on $(\g,\{l_k\}_{k=1}^{+\infty})$ with respect to the representation $(V,\{\rho_k\}_{k=1}^{+\infty})$.
\end{proof}

At the end of this section, we show that a homotopy relative Rota-Baxter operator corresponding to a representation $V$ naturally gives rise to a $L_\infty$ structure on $V$.

\begin{pro}\label{twist-homotopy-lie}
Let $T=\sum_{k=1}^{+\infty}T_k\in \Hom(\bar{\Sym}(V),\g)$ be a homotopy relative Rota-Baxter operator on $(\g,\{l_k\}_{k=1}^{+\infty})$ with respect to the representation $(V,\{\rho_k\}_{k=1}^{+\infty})$.
\begin{itemize}
  \item[\rm(i)] $e^{[\cdot,T]_\NR}\Big(\sum_{k=1}^{+\infty}(l_k+\rho_k)\Big)$ is an  \MC element of the graded Lie algebra $(C^*(\g\oplus V,\g\oplus V),[\cdot,\cdot]_{\NR})$;
  \item[\rm(ii)] there is an $L_\infty$-algebra structure on $V$  given by
\begin{eqnarray}\label{double-homotopy-lie}
\nonumber&&\frkl_{t+1}(v_1,\cdots,v_{t+1})=\sum_{k_1+\cdots+k_m=t}\sum_{\sigma\in \mathbb S_{(k_1,\cdots,k_m,1)}}\\
&&\frac{\varepsilon(\sigma)}{m!}\rho_{m+1}\Big(T_{k_1}\big(v_{\sigma(1)},\cdots,v_{\sigma(k_1)}\big),\cdots,T_{k_m}\big(v_{\sigma(k_1+\cdots+k_{m-1}+1)},\cdots,v_{\sigma(t)}\big),v_{\sigma(t+1)}\Big);
\end{eqnarray}
  \item[\rm(iii)]  $T$ is an $L_\infty$-algebra homomorphism from the  $L_\infty$-algebra $(V,\{\frkl_k\}_{k=1}^{+\infty})$ to   $(\g,\{l_k\}_{k=1}^{+\infty})$.
\end{itemize}
\end{pro}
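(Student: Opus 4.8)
The plan is to realize all three statements as consequences of a single \emph{twist}: the element $\Delta':=e^{[\cdot,T]_\NR}\bigl(\sum_{k\ge1}(l_k+\rho_k)\bigr)$ of $C^*(\g\oplus V,\g\oplus V)$ is a twisted semidirect‑product $L_\infty$-structure on $\g\oplus V$, and (i), (ii), (iii) are obtained by inspecting its components. Write $\Delta:=\sum_{k\ge1}(l_k+\rho_k)$, the structure of the semidirect product $\g\ltimes_\rho V$; by Theorem \ref{graded-Nijenhuis-Richardson-bracket} it is a Maurer--Cartan element of the graded Lie algebra $(C^*(\g\oplus V,\g\oplus V),[\cdot,\cdot]_\NR)$.

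For (i): the operator $[\cdot,T]_\NR$ is a degree $0$ derivation of $(C^*(\g\oplus V,\g\oplus V),[\cdot,\cdot]_\NR)$ by the graded Jacobi identity, and a direct bidegree count shows it is locally nilpotent — bracketing with $T$ strictly decreases the number of $\g$-arguments once the target of a component has been turned into $\g$, and it annihilates $\h=\oplus_n\Hom^n(\bar{\Sym}(V),\g)$, which is abelian; moreover on each fixed arity only finitely many of the summands $l_k+\rho_k$ contribute. Hence $e^{[\cdot,T]_\NR}$ is a well-defined automorphism of this graded Lie algebra (with inverse $e^{-[\cdot,T]_\NR}$), and, carrying degree $1$ elements to degree $1$ elements, it sends the Maurer--Cartan element $\Delta$ to the Maurer--Cartan element $\Delta'$. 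This is (i).

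For (ii): pass to the cofree conilpotent cocommutative coalgebra picture of Remark \ref{NR-coder}, as in the proof of Theorem \ref{hmotopy-o-operator-homotopy-lie}; let $\bar\Delta,\bar T,\bar\Delta'$ be the coderivations of $\bar{\Sym}^c(\g\oplus V)$ corresponding to $\Delta$, $T$, $\Delta'$. Since $T$ has degree $0$, $\bar T$ is a degree $0$ coderivation, so $e^{\bar T}$ is a coalgebra automorphism, and because $[\cdot,T]_\NR=-[T,\cdot]_\NR$ we get $\bar\Delta'=e^{-\bar T}\,\bar\Delta\,e^{\bar T}$. Now Voronov's formula \eqref{V-shla} for the higher derived brackets $\{\frkl_k\}$ on $\h$ gives $\sum_{k\ge1}\frac1{k!}\frkl_k(T,\cdots,T)=P(\Delta')$ (the term $P(\Delta)$ vanishing), $P$ being the projection onto $\h$; hence, by Theorem \ref{hmotopy-o-operator-homotopy-lie}, the hypothesis that $T$ is a homotopy relative Rota--Baxter operator is equivalent to $P(\Delta')=0$, i.e.\ \emph{$\Delta'$ has no component mapping $\bar{\Sym}(V)$ into $\g$}. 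Consequently every component of $\bar\Delta'$ whose arguments all lie in $V$ is $V$-valued, so the sub-coalgebra $\bar{\Sym}^c(V)\subset\bar{\Sym}^c(\g\oplus V)$ is $\bar\Delta'$-stable; its restriction is a square-zero degree $1$ coderivation of $\bar{\Sym}^c(V)$, equivalently an $L_\infty$-structure $\{\frkl_k\}$ on $V$. Finally, expanding $e^{[\cdot,T]_\NR}\Delta=\sum_{m\ge0}\frac1{m!}[\cdots[\Delta,T]_\NR,\cdots,T]_\NR$ and extracting the $V$-valued, all-arguments-in-$V$ part — only the $\bar\rho$-part of $\bar\Delta$ survives, preceded by $\bar T^{\,m}$ applied to the inputs — reproduces exactly formula \eqref{double-homotopy-lie}; this is the same computation as in the proof of Theorem \ref{hmotopy-o-operator-homotopy-lie}. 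This proves (ii).

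For (iii): the relation $\bar\Delta'=e^{-\bar T}\bar\Delta e^{\bar T}$ says that the coalgebra automorphism $e^{\bar T}$ is an $L_\infty$-isomorphism $(\g\oplus V,\Delta')\to(\g\oplus V,\Delta)$. By (ii), the sub-coalgebra inclusion $i_V\colon\bar{\Sym}^c(V)\hookrightarrow\bar{\Sym}^c(\g\oplus V)$ is a strict $L_\infty$-morphism $(V,\{\frkl_k\})\to(\g\oplus V,\Delta')$; and the projection $p_\g\colon\g\oplus V\to\g$ is a strict $L_\infty$-morphism $(\g\oplus V,\Delta)\to(\g,\{l_k\})$, because $\bar\rho$ always produces at least one $V$-factor, so $\bar p_\g\circ\bar\rho=0$, while $\bar p_\g$ commutes with $\bar l$. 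The composite $p_\g\circ e^{\bar T}\circ i_V\colon(V,\{\frkl_k\})\to(\g,\{l_k\})$ is therefore an $L_\infty$-morphism, and a short computation of its Taylor coefficients — on $\Sym^k(V)$ only the single term $T_k$ of $e^{\bar T}$ survives the projection onto $\g$ in arity $1$ — identifies it with the morphism whose components are $\{T_k\}$. Hence $T$ is an $L_\infty$-morphism, which is (iii).

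The main obstacle will be the sign- and coefficient-bookkeeping required in (ii) to match the intrinsic twist $e^{[\cdot,T]_\NR}\Delta$ with the explicit multilinear expression \eqref{double-homotopy-lie}: one must unwind the binomial expansion of the iterated bracket $[\cdots[\bar\Delta,\bar T],\cdots,\bar T]$, describe $\bar T^{\,m}$ on a symmetric word as a sum over partitions into $T$-blocks, and track the Koszul signs under the coderivation/cofree-coalgebra dictionary — precisely the type of computation already carried out, in the Maurer--Cartan characterization, in the proof of Theorem \ref{hmotopy-o-operator-homotopy-lie}.
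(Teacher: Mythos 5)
Your proposal is correct and follows essentially the same route as the paper: local nilpotency of $[\cdot,T]_\NR$ makes $e^{[\cdot,T]_\NR}$ an automorphism carrying the \MC element $\sum_{k}(l_k+\rho_k)$ to an \MC element, the homotopy relative Rota-Baxter condition (via the \MC characterization of Theorem \ref{hmotopy-o-operator-homotopy-lie}) lets one restrict the twisted structure to $V$ and read off \eqref{double-homotopy-lie} in the coderivation picture, and \eqref{double-homotopy-lie} then yields (iii). The only difference is that you make explicit the bookkeeping the paper labels ``straightforward'', including a clean structural derivation of (iii) as the composite $p_\g\circ e^{\bar T}\circ i_V$, which is a nice touch but not a different method.
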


\begin{proof}
(i) For any  $\frkX\in \Hom(\Sym^i(\g)\otimes \Sym^j(V),\g)$ and $\frkY\in \Hom(\Sym^i(\g)\otimes \Sym^j(V),V)$, we have \begin{eqnarray*}
 \underbrace{[\cdots[[}_{i+1}\frkX,T]_\NR,T]_\NR,\cdots,T]_\NR&=&0,\\
 \underbrace{[\cdots[[}_{i+2}\frkY,T]_\NR,T]_\NR,\cdots,T]_\NR&=&0.
\end{eqnarray*}  Thus,   $[\cdot,T]_\NR$ is a locally nilpotent derivation of $(C^*(\g\oplus V,\g\oplus V),[\cdot,\cdot]_{\NR})$. Since $e^{[\cdot,T]_\NR}$ is an automorphism of $(C^*(\g\oplus V,\g\oplus V),[\cdot,\cdot]_{\NR})$, we have
\begin{eqnarray*}
[e^{[\cdot,T]_\NR}\Big(\sum_{k=1}^{+\infty}(l_k+\rho_k)\Big),e^{[\cdot,T]_\NR}\Big(\sum_{k=1}^{+\infty}(l_k+\rho_k)\Big)]_\NR=e^{[\cdot,T]_\NR}[\sum_{k=1}^{+\infty}(l_k+\rho_k),\sum_{k=1}^{+\infty}(l_k+\rho_k)]_\NR=0,
\end{eqnarray*}
which implies that $e^{[\cdot,T]_\NR}\Big(\sum_{k=1}^{+\infty}(l_k+\rho_k)\Big)$ is an  \MC element.

(ii)  By \eqref{mc-homotopy-o},   $e^{[\cdot,T]_\NR}\Big(\sum_{k=1}^{+\infty}(l_k+\rho_k)\Big)|_V$ is a sub $L_\infty$-algebra structure on $V$. It is straightforward to deduce that the $L_\infty$-algebra structure on $V$ is just given by \eqref{double-homotopy-lie}.

(iii) By the definition of  a homotopy relative Rota-Baxter operator and  \eqref{double-homotopy-lie}, we deduce that $T$ is an $L_\infty$-algebra homomorphism.
\end{proof}

\subsection{Strict homotopy relative Rota-Baxter operators on $L_\infty$-algebras and pre-Lie$_\infty$-algebras}

\begin{defi}
Let $(V,\{\rho_k\}_{k=1}^{+\infty})$ be a representation of an $L_\infty$-algebra $(\g,\{l_k\}_{k=1}^{+\infty})$. A degree $0$ element $T\in \Hom(V,\g)$ is called a {\em strict homotopy relative Rota-Baxter operator} on an $L_\infty$-algebra $(\g,\{l_k\}_{k=1}^{+\infty})$ with respect to the representation $(V,\{\rho_k\}_{k=1}^{+\infty})$ if the following equalities hold for all $p\geq 1$ and all homogeneous elements $v_1,\cdots,v_p\in V$,
\begin{eqnarray}
\label{rota-baxter-o-homotopy-lie}
l_p\big(Tv_{1},\cdots,Tv_{p}\big)=\sum_{i=1}^{p}(-1)^{(v_{i+1}+\cdots+v_p)v_i}T\rho_{p}(Tv_1,\cdots,Tv_{i-1},Tv_{i+1},\cdots,Tv_p,v_i).
\end{eqnarray}
\end{defi}

\begin{rmk}
A strict homotopy relative Rota-Baxter operator   is just a homotopy relative Rota-Baxter operator $T=\sum_{i=1}^{+\infty}T_i\in \Hom(\bar{\Sym}(V),\g)$, in which $T_i=0$ for all $i\ge 2$.
\end{rmk}

Let $V$ be a graded vector space. Denote by $\Hom^n(\Sym(V)\otimes V,V)$ the space of degree $n$ linear maps from the graded vector space $\Sym(V)\otimes V$ to the graded vector space $V$. Obviously, an element $f\in\Hom^n(\Sym(V)\otimes V,V)$ is the sum of $f_i:\Sym^{i-1}(V)\otimes V\lon V$. We will write  $f=\sum_{i=1}^{+\infty} f_i$.
 Set $\CV^n(V,V):=\Hom^n(\Sym(V)\otimes V,V)$ and
$
\CV^*(V,V):=\oplus_{n\in\mathbb Z}\CV^n(V,V).
$
As the graded version of the Matsushima-Nijenhuis bracket given in \cite{CL}, the {\em graded Matsushima-Nijenhuis bracket} $[\cdot,\cdot]_{\MN}$ on the graded vector space $\CV^*(V,V)$ is given
by:
\begin{eqnarray}
[f,g]_{\MN}:=f\diamond g-(-1)^{mn}g\diamond f,\,\,\,\,\forall f=\sum_{i=1}^{+\infty} f_i\in \CV^m(V,V),~g=\sum_{j=1}^{+\infty} g_j\in \CV^n(V,V),
\label{eq:gfgcirc}
\end{eqnarray}
where $f\diamond g\in \CV^{m+n}(V,V)$ is defined by
 \begin{eqnarray}\label{graded-NR-circ}
f\diamond g&=&\Big(\sum_{i=1}^{+\infty}f_i\Big)\diamond\Big(\sum_{j=1}^{+\infty}g_j\Big):=\sum_{k=1}^{+\infty}\Big(\sum_{i+j=k+1}f_i\diamond g_j\Big),
\end{eqnarray}
while $f_i\diamond g_j\in \Hom(\Sym^{i+j-2}(V)\otimes V,V)$ is defined by
\begin{eqnarray}
\nonumber&&(f_i\diamond g_j)(v_1,\cdots,v_{i+j-1})\\
\label{graded-MN}&=&\sum_{\sigma\in\mathbb S_{(j-1,1,i-2)}}\varepsilon(\sigma)f_i(g_j(v_{\sigma(1)},\cdots,v_{\sigma(j-1)},v_{\sigma(j)}),v_{\sigma(j+1)},\cdots,v_{\sigma(i+j-2)},v_{i+j-1})\\
\nonumber&&+\sum_{\sigma\in\mathbb S_{(i-1,j-1)}}(-1)^{\alpha}\varepsilon(\sigma)f_i(v_{\sigma(1)},\cdots,v_{\sigma(i-1)},g_j( v_{\sigma(i)},\cdots,v_{\sigma(i+j-2)},v_{i+j-1})),
\end{eqnarray}
where $\alpha=n(v_{\sigma(1)}+v_{\sigma(2)}+\cdots+v_{\sigma(i-1)})$. Then the graded vector space $\CV^*(V,V)$ equipped with the  graded Matsushima-Nijenhuis bracket $[\cdot,\cdot]_{\MN}$ is a graded Lie algebra.

The notion of a pre-Lie$_\infty$-algebra was introduced in \cite{CL}. See \cite{Mer} for more applications of pre-Lie$_\infty$-algebras in geometry.

\begin{thm}{\rm (\cite{CL})}\label{MC-of-homotopy-pre-lie}
  Let $\sum_{k=1}^{+\infty}\oprn_k$ be a degree $1$ linear map from the graded vector space $S(V)\otimes V$ to the graded vector space $V$. Then $(V,\{\oprn_k\}_{k=1}^{+\infty})$ is a pre-Lie$_\infty$-algebra if and only if $\sum_{k=1}^{+\infty}\oprn_k$ is an \MC   element of the graded Lie algebra $(\CV^*(V,V),[\cdot,\cdot]_{\MN})$.\qed
\end{thm}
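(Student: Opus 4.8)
The plan is to reduce the statement to the structural fact that the graded vector space $\CV^*(V,V)$, equipped with the operation $\diamond$ of \eqref{graded-NR-circ}--\eqref{graded-MN}, is a \emph{graded right pre-Lie algebra} --- which is precisely what underlies the assertion, made immediately before the theorem, that $(\CV^*(V,V),[\cdot,\cdot]_{\MN})$ is a graded Lie algebra. So the first step is to verify the graded pre-Lie identity
\begin{eqnarray*}
(f\diamond g)\diamond h-f\diamond(g\diamond h)=(-1)^{np}\big((f\diamond h)\diamond g-f\diamond(h\diamond g)\big),\qquad g\in\CV^n(V,V),~h\in\CV^p(V,V),
\end{eqnarray*}
by expanding both sides with \eqref{graded-MN}: the contributions in which $g$ and $h$ are substituted into two distinct arguments of $f$ (the distinguished last slot or a symmetric slot) occur on both sides and cancel once the shuffles and Koszul signs are matched, while the nested substitutions reorganize into $f\diamond(g\diamond h)$ on the left and $f\diamond(h\diamond g)$ on the right. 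Antisymmetrizing this identity recovers the graded Jacobi identity for $[\cdot,\cdot]_{\MN}$; and for a degree $1$ element $\oprn=\sum_{k\ge1}\oprn_k$, graded antisymmetry in \eqref{eq:gfgcirc} gives $[\oprn,\oprn]_{\MN}=\oprn\diamond\oprn-(-1)^{1\cdot1}\oprn\diamond\oprn=2\,\oprn\diamond\oprn$. Since $[\cdot,\cdot]_{\MN}$ carries no differential, $\oprn$ is an \MC element exactly when $\oprn\diamond\oprn=0$; and by \eqref{graded-NR-circ} the arity-$p$ component of $\oprn\diamond\oprn$ is the \emph{finite} sum $\sum_{i+j=p+1}\oprn_i\diamond\oprn_j$, so this \MC equation is a well-defined family of equations requiring no completeness hypothesis.

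Next I would unwind, for each $p\ge1$, the equation $\sum_{i+j=p+1}\oprn_i\diamond\oprn_j=0$ using \eqref{graded-MN}: the first sum there yields exactly the terms in which one structure map is fed into the distinguished last argument of another, while the second sum --- carrying the sign $(-1)^{\alpha}$ with $\alpha=n(v_{\sigma(1)}+\cdots+v_{\sigma(i-1)})$ --- yields the terms in which it is fed into a symmetric argument. Comparing the resulting family of multilinear identities among the $\oprn_k$ with the structure relations defining a pre-Lie$_\infty$-algebra in \cite{CL}, one checks that they coincide; hence $\oprn\diamond\oprn=0$ if and only if $(V,\{\oprn_k\}_{k=1}^{+\infty})$ is a pre-Lie$_\infty$-algebra, which gives both implications simultaneously.

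The hard part will be the Koszul-sign bookkeeping --- in the associator identity for $\diamond$, and, above all, in matching the unwound \MC equations term by term with the Chapoton--Livernet relations; the rest is bracket formalism. If one prefers to trade this computation for operadic formalism, the same conclusion follows by realizing $\CV^*(V,V)$ as the coderivations of the cofree conilpotent coalgebra on the Koszul dual cooperad of the pre-Lie operad, with $\diamond$ the induced composition pre-Lie product; under this identification pre-Lie$_\infty$-structures are by construction exactly the square-zero coderivations, i.e. the \MC elements of $(\CV^*(V,V),[\cdot,\cdot]_{\MN})$ --- which is the point of view taken in \cite{CL}.
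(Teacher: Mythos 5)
Your argument is correct, but note that the paper itself offers no proof to compare against: Theorem \ref{MC-of-homotopy-pre-lie} is quoted from \cite{CL} and closed with a \qed, exactly as Theorem \ref{graded-Nijenhuis-Richardson-bracket} is for the $L_\infty$ case. Your reconstruction is the standard one: $\diamond$ satisfies the graded (right) pre-Lie identity, so $[\cdot,\cdot]_{\MN}$ is a graded Lie bracket; for the degree $1$ element $\oprn=\sum_k\oprn_k$ one has $[\oprn,\oprn]_{\MN}=2\,\oprn\diamond\oprn$, and since there is no differential the \MC equation reduces to $\oprn\diamond\oprn=0$, whose arity-$p$ component is the finite sum $\sum_{i+j=p+1}\oprn_i\diamond\oprn_j$, so no completeness is needed --- all of this matches the conventions \eqref{eq:gfgcirc}--\eqref{graded-MN}. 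The one step you defer, matching the unwound equations $\sum_{i+j=p+1}\oprn_i\diamond\oprn_j=0$ with the Chapoton--Livernet relations, is precisely where the content sits; but since the present paper never writes those relations down (it only points to \cite{CL}, and one may equally take the \MC/square-zero-coderivation description as the definition of a pre-Lie$_\infty$-algebra), your closing operadic remark --- coderivations on the cofree coalgebra over the Koszul dual cooperad of $\mathsf{PreLie}$, i.e.\ the permutative cooperad, with pre-Lie$_\infty$-structures the square-zero degree $1$ elements --- is exactly the justification the citation to \cite{CL} supplies. In short: correct, essentially the intended argument, with the sign bookkeeping honestly flagged rather than hidden.
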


Now we show that there is a close relationship between  the graded Lie algebra $(\CV^*(V,V),[\cdot,\cdot]_{\MN})$ and $(C^*(V,V),[\cdot,\cdot]_{\NR})$.
 Define a graded linear map $\Phi:\CV^*(V,V)\lon C^*(V,V)$ of degree $0$ by
$$
\Phi(f)=\sum_{k=1}^{+\infty}\Phi(f)_k=\Phi(f_k),\quad \forall f=\sum_{k=1}^{+\infty}f_k\in\Hom^m(\Sym(V)\otimes V,V),
$$
where $ \Phi(f_k)$  is given by
\begin{eqnarray*}\label{homotopy-pre-to-lie}
\Phi(f_k)(v_1,\cdots,v_k)=\sum_{\sigma\in\mathbb S_{(k-1,1)}}\varepsilon(\sigma)f_k(v_{\sigma(1)},\cdots,v_{\sigma(k)})=\sum_{i=1}^{k}(-1)^{v_i(v_{i+1}+\cdots+v_k)}f_k(v_{1},\cdots,\hat{v}_{i},\cdots,v_{k},v_i).
\end{eqnarray*}

\begin{thm}\label{homotopy-pre-lie-to-homotopy-lie}
 $\Phi$ is a homomorphism   from the graded Lie algebra
$(\CV^*(V,V),[\cdot,\cdot]_{\MN})$ to the graded Lie algebra
$(C^*(V,V),[\cdot,\cdot]_{\NR})$.
\end{thm}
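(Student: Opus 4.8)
The plan is to reduce the identity $\Phi([f,g]_{\MN})=[\Phi(f),\Phi(g)]_{\NR}$ to a statement about the underlying (non-antisymmetrized) products $\diamond$ and $\bar\circ$. Since both brackets are bilinear, $\Phi$ is linear, and every element of $\CV^*(V,V)$ (resp.\ of $C^*(V,V)$) is the sum of its homogeneous components, it suffices to treat $f=f_i\in\Hom(\Sym^{i-1}(V)\otimes V,V)$ of internal degree $m$ and $g=g_j\in\Hom(\Sym^{j-1}(V)\otimes V,V)$ of internal degree $n$. Recalling $[f,g]_{\MN}=f\diamond g-(-1)^{mn}g\diamond f$ and $[P,Q]_{\NR}=P\bar\circ Q-(-1)^{pq}Q\bar\circ P$, and noting that $\Phi$ preserves the internal degree, the theorem follows once we establish the key identity
$$\Phi(f_i\diamond g_j)=\Phi(f_i)\,\bar\circ\,\Phi(g_j),$$
since antisymmetrizing in $(f,g)$ then yields the claim. (One may also first double-check that $f_i\diamond g_j$ does lie in $\Hom(\Sym^{i+j-2}(V)\otimes V,V)$, so that $\Phi$ applies to it.)

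To prove the key identity I would expand both sides as sums of multilinear expressions in $v_1,\dots,v_{i+j-1}$ and exhibit a sign-preserving bijection between the two collections of terms. On the left, $\Phi$ is the $(i+j-2,1)$-shuffle symmetrization applied to the two-term expression \eqref{graded-MN} for $f_i\diamond g_j$; unpacking it gives a sum over the choice of the argument $w$ placed in the distinguished last slot together with the internal shuffles, and this splits into two families according to which summand of \eqref{graded-MN} it comes from: the family in which $w$ occupies the last (non-symmetric) slot of $f_i$ while the output of $g_j$ sits among the symmetric slots of $f_i$, and the family in which the output of $g_j$ occupies the last slot of $f_i$ while $w$ occupies the last slot of $g_j$. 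On the right, expanding the outer $\Phi(f_i)$ and the inner $\Phi(g_j)$ inside $P\bar\circ Q$ produces a sum over: the $j$-element subset $B\subseteq\{v_1,\dots,v_{i+j-1}\}$ fed into $g_j$, the element of $B$ placed in the distinguished slot of $g_j$, and which of the $i$ inputs of $f_i$ — the $g_j$-block or one of the $i-1$ complementary elements — occupies the distinguished slot of $f_i$. Matching ``a complementary element is last in $f_i$'' with the first family and ``the $g_j$-block is last in $f_i$'' with the second family, and using that $f_i$ is symmetric in its first $i-1$ arguments and $g_j$ in its first $j-1$ arguments, produces the desired bijection; the Koszul signs agree because the factor $(-1)^{\alpha}$, $\alpha=n(v_{\sigma(1)}+\cdots+v_{\sigma(i-1)})$, in the second summand of \eqref{graded-MN} is exactly the sign generated on the right when the degree-$n$ block $\Phi(g_j)(\cdots)$ is commuted past the complementary arguments in the expansion of the symmetrized $\Phi(f_i)$.

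The main obstacle is purely combinatorial: verifying that the two shuffle/Koszul sign systems coincide under this bijection, and that there is no over- or under-counting (i.e.\ that the multiplicities produced by the nested shuffle sums on the right match those from the single outer symmetrization on the left). A convenient way to organize the bookkeeping is to fix once and for all the convention that in every term the globally last argument is the distinguished input, to record signs by the position-parity of the moved arguments, and to verify the small cases $(i,j)=(1,1)$ and $(i,j)=(2,1)$ first as a sanity check before carrying out the general shuffle manipulation. Alternatively, one can run the whole computation in the coderivation picture of Remark~\ref{NR-coder}, realizing $\Phi$ as the map on coderivations induced by the canonical map between the relevant cofree conilpotent coalgebras, which makes the sign issues automatic; either route gives the key identity, and the theorem follows at once.
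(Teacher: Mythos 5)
Your proposal is correct and is essentially the argument the paper has in mind: the paper's own ``proof'' is only the remark that the statement follows from a direct but tedious computation, and your reduction to the product-level identity $\Phi(f_i\diamond g_j)=\Phi(f_i)\,\bar{\circ}\,\Phi(g_j)$, proved by the shuffle-term bijection in which the factor $(-1)^{\alpha}$ from the second summand of the $\diamond$-product matches the Koszul sign of moving the degree-$n$ block $\Phi(g_j)(\cdots)$ past the symmetric arguments of $f_i$, is a sound way to organize exactly that computation. The stronger identity you rely on does hold (one can check it directly in low arities and the general sign bookkeeping goes through as you describe; conceptually it reflects that $\Phi$ respects the underlying convolution-type products, not merely their commutators), so antisymmetrizing indeed yields $\Phi([f,g]_{\MN})=[\Phi(f),\Phi(g)]_{\NR}$.
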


\begin{proof}It follows from a direct but tedious computation. We omit details.
\end{proof}

In the classical case, the symmetrization of a pre-Lie algebra gives rise to a Lie algebra. The following result generalizes this construction to pre-Lie$_\infty$-algebras and $L_\infty$-algebras.

\begin{cor}\label{homotopy-pre-lie-to-lie-cor}
Let $(\g,\{\oprn_k\}_{k=1}^{+\infty})$ be a pre-Lie$_\infty$-algebra and we define $l_k$ by
\begin{eqnarray}\label{homotopy-pre-lie-to-lie}
l_k(x_1,\cdots,x_k)=\Phi(\oprn_k)(x_1,\cdots,x_k)=\sum_{i=1}^{k}(-1)^{x_i(x_{i+1}+\cdots+x_k)}\oprn_k(x_{1},\cdots,\hat{x}_{i},\cdots,x_{k},x_i).
\end{eqnarray}
Then $(\g,\{l_k\}_{k=1}^{+\infty})$ is an $L_\infty$-algebra. We denote this $L_\infty$-algebra by $\g^C$ and call it  the {\em sub-adjacent $L_\infty$-algebra} of $(\g,\{\oprn_k\}_{k=1}^{+\infty})$. Moreover, $(\g,\{\oprn_k\}_{k=1}^{+\infty})$ is called the {\em compatible pre-Lie$_\infty$-algebra} structure on the $L_\infty$-algebra $\g^C$.
\end{cor}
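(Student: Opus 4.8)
The plan is to deduce this as a formal consequence of Theorem \ref{homotopy-pre-lie-to-homotopy-lie} together with the Maurer--Cartan characterizations in Theorem \ref{MC-of-homotopy-pre-lie} and Theorem \ref{graded-Nijenhuis-Richardson-bracket}. First I would observe that, since both $(\CV^*(V,V),[\cdot,\cdot]_{\MN})$ and $(C^*(V,V),[\cdot,\cdot]_{\NR})$ are graded Lie algebras with no differential, an element $\Delta$ of degree $1$ in either of them is an \MC element precisely when $[\Delta,\Delta]=0$. By Theorem \ref{MC-of-homotopy-pre-lie}, the hypothesis that $(\g,\{\oprn_k\}_{k=1}^{+\infty})$ is a pre-Lie$_\infty$-algebra means exactly that $\Delta:=\sum_{k=1}^{+\infty}\oprn_k$ is a degree $1$ \MC element of $(\CV^*(V,V),[\cdot,\cdot]_{\MN})$ with $V=\g$.

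Next I would invoke Theorem \ref{homotopy-pre-lie-to-homotopy-lie}: the map $\Phi$ is a morphism of graded Lie algebras from $(\CV^*(V,V),[\cdot,\cdot]_{\MN})$ to $(C^*(V,V),[\cdot,\cdot]_{\NR})$, and by construction it has degree $0$. Hence $\Phi(\Delta)=\sum_{k=1}^{+\infty}\Phi(\oprn_k)$ is again an element of degree $1$, and
\[
[\Phi(\Delta),\Phi(\Delta)]_{\NR}=\Phi\big([\Delta,\Delta]_{\MN}\big)=\Phi(0)=0,
\]
so $\Phi(\Delta)$ is a degree $1$ \MC element of $(C^*(V,V),[\cdot,\cdot]_{\NR})$. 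By Theorem \ref{graded-Nijenhuis-Richardson-bracket}, such \MC elements are exactly the $L_\infty$-algebra structures on $V=\g$; writing $\Phi(\Delta)=\sum_{k=1}^{+\infty}l_k$ with $l_k=\Phi(\oprn_k)\in\Hom(\Sym^k(\g),\g)$ of degree $1$, we conclude that $(\g,\{l_k\}_{k=1}^{+\infty})$ is an $L_\infty$-algebra. Finally I would unwind the definition of $\Phi$ on the homogeneous component $\oprn_k$ to see that $l_k$ is given by the stated formula
\[
l_k(x_1,\cdots,x_k)=\sum_{i=1}^{k}(-1)^{x_i(x_{i+1}+\cdots+x_k)}\oprn_k(x_{1},\cdots,\hat{x}_{i},\cdots,x_{k},x_i),
\]
which is precisely $\Phi(\oprn_k)(x_1,\cdots,x_k)$; the remaining terminology (``sub-adjacent $L_\infty$-algebra'', ``compatible pre-Lie$_\infty$-algebra'') is just a naming convention.

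There is essentially no serious obstacle at the level of the corollary itself: all the analytic content has been absorbed into Theorem \ref{homotopy-pre-lie-to-homotopy-lie}, whose proof is the ``direct but tedious computation'' comparing $f_i\diamond g_j$ with the symmetrized $\Phi(f_i)\,\bar\circ\,\Phi(g_j)$. If one wanted a self-contained argument one would have to carry out that comparison, matching the two summands of \eqref{graded-MN} against the shuffle sum in \eqref{graded-NR} after applying $\Phi$; the only delicate points there are bookkeeping of Koszul signs $\varepsilon(\sigma)$ and of the sign $(-1)^{\alpha}$ appearing in the second sum of \eqref{graded-MN}. But assuming Theorem \ref{homotopy-pre-lie-to-homotopy-lie} as stated, the corollary is immediate from the ``morphism sends \MC elements to \MC elements'' principle.
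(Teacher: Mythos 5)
Your proposal is correct and is exactly the paper's argument: the paper also deduces the corollary from Theorem \ref{graded-Nijenhuis-Richardson-bracket}, Theorem \ref{MC-of-homotopy-pre-lie} and Theorem \ref{homotopy-pre-lie-to-homotopy-lie}, i.e.\ the graded Lie algebra morphism $\Phi$ sends the \MC element $\sum_k\oprn_k$ to the \MC element $\sum_k l_k$. You have merely written out the details that the paper leaves implicit.
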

\begin{proof}
 It follows from Theorem \ref{graded-Nijenhuis-Richardson-bracket}, Theorem \ref{MC-of-homotopy-pre-lie} and Theorem \ref{homotopy-pre-lie-to-homotopy-lie}.
\end{proof}

Let $(\g,\{\oprn_k\}_{k=1}^{+\infty})$ be a pre-Lie$_\infty$-algebra. For all $k\ge 1$, we define  $L_k:\Sym^{k-1}(\g)\otimes \g\lon \g$ by
\begin{eqnarray}\label{left-multiplication}
L_k(x_1,\cdots,x_{k-1},x_k)=\oprn_k(x_1,\cdots,x_{k-1},x_k).
\end{eqnarray}

\begin{pro}\label{homotopy-pre-lie-id-o}
With the above notation, $(\g,\{L_k\}_{k=1}^{+\infty})$ is a representation of the sub-adjacent $L_\infty$-algebra $\g^C$. Moreover, the identity map $\Id:\g\lon\g$ is a strict homotopy relative Rota-Baxter operator on the $L_\infty$-algebra $\g^C$ with respect to the representation $(\g,\{L_k\}_{k=1}^{+\infty})$.
\end{pro}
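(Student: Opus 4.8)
The plan is to obtain the representation claim as a consequence of the sub-adjacent construction of Corollary~\ref{homotopy-pre-lie-to-lie-cor}, applied to a pre-Lie$_\infty$-structure on the doubled space $\g\oplus\g$; the operator claim will then be immediate from the formula defining $l_k$.

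Write $\Theta=\sum_{k\ge 1}\theta_k$ for the given pre-Lie$_\infty$-structure on $\g$, a Maurer-Cartan element of $(\CV^*(\g,\g),[\cdot,\cdot]_{\MN})$ by Theorem~\ref{MC-of-homotopy-pre-lie}. First I would lift each $\theta_k$ to $\hat\theta_k\in\Hom(\Sym^{k-1}(\g\oplus\g)\otimes(\g\oplus\g),\g\oplus\g)$, where $\g\oplus\g$ carries the original grading on each summand, by
$$\hat\theta_k\big((x_1,m_1),\cdots,(x_k,m_k)\big):=\big(\theta_k(x_1,\cdots,x_k),\ \theta_k(x_1,\cdots,x_{k-1},m_k)\big);$$
this is graded-symmetric in its first $k-1$ slots and of degree $1$, so $\hat\Theta:=\sum_{k\ge 1}\hat\theta_k$ lies in $\CV^1(\g\oplus\g,\g\oplus\g)$. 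The key point is that $f\mapsto\hat f$ is a homomorphism of graded Lie algebras $\CV^*(\g,\g)\to\CV^*(\g\oplus\g,\g\oplus\g)$; concretely one verifies $\widehat{f\diamond g}=\hat f\diamond\hat g$ by inspecting \eqref{graded-MN}, noting that in any composite built from hatted operations the first coordinate of the output never sees the $m_i$'s and the second coordinate is fed only through the last argument, so the two groups of terms in \eqref{graded-MN} for $\hat f\diamond\hat g$ reproduce, coordinate by coordinate, those for $f\diamond g$. Hence $[\hat\Theta,\hat\Theta]_{\MN}=\widehat{[\Theta,\Theta]_{\MN}}=0$, and by Theorem~\ref{MC-of-homotopy-pre-lie} the pair $(\g\oplus\g,\{\hat\theta_k\})$ is a pre-Lie$_\infty$-algebra.

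Next I would compute its sub-adjacent $L_\infty$-algebra via \eqref{homotopy-pre-lie-to-lie}. Unwinding the definition of $\Phi$ and using the structure of $\hat\theta_k$ (its first coordinate ignores the second coordinates of the inputs, its second coordinate uses only that of the last input) yields
$$\Phi(\hat\theta_k)\big((x_1,m_1),\cdots,(x_k,m_k)\big)=\Big(\Phi(\theta_k)(x_1,\cdots,x_k),\ \sum_{i=1}^{k}(-1)^{x_i(x_{i+1}+\cdots+x_k)}L_k(x_1,\cdots,\hat{x}_i,\cdots,x_k,m_i)\Big),$$
with $\Phi(\theta_k)=l_k$ and $L_k=\theta_k$. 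The right-hand side is precisely the semidirect product $L_\infty$-structure $\g^C\ltimes_L\g$. Since $(\g\oplus\g,\{\Phi(\hat\theta_k)\})$ is an $L_\infty$-algebra by Corollary~\ref{homotopy-pre-lie-to-lie-cor}, and an $L_\infty$-structure on $\g\oplus\g$ of this semidirect-product shape is equivalent to a representation of $\g^C$ on $\g$ (the component of its generalized Jacobi identity with all but one argument in the first summand and the remaining one in the second being exactly \eqref{sh-Lie-rep}), we conclude that $(\g,\{L_k\})$ is a representation of $\g^C$.

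It remains to observe that, for $T=\Id$ and this representation, the strict relative Rota-Baxter identity \eqref{rota-baxter-o-homotopy-lie} becomes $l_p(x_1,\cdots,x_p)=\sum_{i=1}^{p}(-1)^{x_i(x_{i+1}+\cdots+x_p)}L_p(x_1,\cdots,\hat{x}_i,\cdots,x_p,x_i)$, which is nothing but the defining formula \eqref{homotopy-pre-lie-to-lie} for $l_p=\Phi(\theta_p)$ together with $L_p=\theta_p$; so it holds automatically. I expect the only laborious step to be the verification of $\widehat{f\diamond g}=\hat f\diamond\hat g$ — or, for someone preferring a hands-on route, the term-by-term matching of the representation axiom \eqref{sh-Lie-rep} for $\{L_k\}$ with the Maurer-Cartan equation $\Theta\diamond\Theta=0$ after substituting $l_i=\Phi(\theta_i)$ and $L_i=\theta_i$: the bookkeeping of shuffle types and Koszul signs is routine but must be handled with care. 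Everything else is formal.
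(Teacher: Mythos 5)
Your argument is correct, but it proves the representation claim by a genuinely different route than the paper. The paper's proof is the ``hands-on route'' you mention as an alternative: it plugs \eqref{homotopy-pre-lie-to-lie} and \eqref{left-multiplication} into the representation axiom \eqref{sh-Lie-rep}, reindexes the shuffles (passing from $\mathbb S_{(i,n-i-1)}$ together with the expansion of $l_i$ to $\mathbb S_{(i-1,1,n-i-1)}$-shuffles), and recognizes the resulting sum as the pre-Lie$_\infty$ identity for $\{\oprn_k\}$, hence zero; the statement about $\Id$ is then read off from \eqref{homotopy-pre-lie-to-lie} exactly as you do. Your route instead builds the ``left-multiplication doubling'' $\hat\theta_k$ on $\g\oplus\g$, checks that the hat map intertwines the Matsushima--Nijenhuis products, and pushes the resulting pre-Lie$_\infty$-structure through $\Phi$ (Theorem \ref{MC-of-homotopy-pre-lie}, Theorem \ref{homotopy-pre-lie-to-homotopy-lie}, Corollary \ref{homotopy-pre-lie-to-lie-cor}) to land on the semidirect-product-shaped $L_\infty$-structure $\g^C\ltimes_L\g$. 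This is sound: the identity $\widehat{f\diamond g}=\hat f\diamond\hat g$ does hold, precisely because in both groups of terms of \eqref{graded-MN} the distinguished last argument stays either in the last slot of the outer map or in the last slot of the inner map, which is exactly where your hat extracts the second coordinate (the classical shadow of this is that $(a,m)\cdot(b,n)=(a\cdot b,a\cdot n)$ is again pre-Lie). What your approach buys is a structural explanation --- the representation is the image under the symmetrization morphism $\Phi$ of a natural module-type pre-Lie$_\infty$-structure --- and it avoids redoing the shuffle reindexing of the paper's computation. What it costs is two auxiliary facts the paper does not supply: the hat-morphism lemma, and the converse direction of the semidirect product statement (that an $L_\infty$-structure on $\g\oplus V$ of semidirect shape, restricting to $\g^C$ and linear in the second summand, forces \eqref{sh-Lie-rep}); the paper only records the direction ``representation $\Rightarrow$ semidirect product'', so you would need to spell out that the relevant component of the generalized Jacobi identity is \eqref{sh-Lie-rep} with matching Koszul signs --- i.e.\ some of the sign bookkeeping you defer is relocated rather than avoided, and is comparable in volume to the paper's direct check. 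The treatment of the second claim (that $\Id$ is a strict homotopy relative Rota-Baxter operator) coincides with the paper's.
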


\begin{proof}
For all $x_1,\cdots,x_n\in\g$, by the definition of pre-Lie$_\infty$-algebras, we have
\begin{eqnarray*}
&&\sum_{i=1}^{n-1}\sum_{\sigma\in \mathbb S_{(i,n-i-1)} }\varepsilon(\sigma)L_{n-i+1}(l_i(x_{\sigma(1)},\cdots,x_{\sigma(i)}),x_{\sigma(i+1)},\cdots,x_{\sigma(n-1)},x_n)\\
&&+\sum_{i=1}^{n}\sum_{\sigma\in \mathbb S_{(n-i,i-1)} }\varepsilon(\sigma)(-1)^{x_{\sigma(1)}+\cdots+x_{\sigma(n-i)}}L_{n-i+1}(x_{\sigma(1)},\cdots,x_{\sigma(n-i)},L_i(x_{\sigma(n-i+1)},\cdots,x_{\sigma(n-1)},x_n))\\
&\stackrel{\eqref{homotopy-pre-lie-to-lie},\eqref{left-multiplication}}{=}&
\sum_{i=1}^{n-1}\sum_{\tau\in \mathbb S_{(i-1,1,n-i-1)} }\varepsilon(\tau)\oprn_{n-i+1}(\oprn_i(x_{\tau(1)},\cdots,x_{\tau(i-1)},x_{\tau(i)}),x_{\tau(i+1)},\cdots,x_{\tau(n-1)},x_n)\\
&&+\sum_{i=1}^{n}\sum_{\tau\in \mathbb S_{(n-i,i-1)} }\varepsilon(\tau)(-1)^{x_{\tau(1)}+\cdots+x_{\tau(n-i)}}\oprn_{n-i+1}(x_{\tau(1)},\cdots,x_{\tau(n-i)},\oprn_i(x_{\tau(n-i+1)},\cdots,x_{\tau(n-1)},x_n))\\
& {=}&0.
\end{eqnarray*}
Thus, we deduce that $(\g,\{L_k\}_{k=1}^{+\infty})$ is a representation of the sub-adjacent $L_\infty$-algebra $\g^C$.
By \eqref{homotopy-pre-lie-to-lie}, we deduce that $\Id$ is a strict homotopy relative Rota-Baxter operator on   $\g^C$ with respect to  $(\g,\{L_k\}_{k=1}^{+\infty})$.

\end{proof}

Now we are ready to show that   strict homotopy relative Rota-Baxter operators on an $L_\infty$-algebra $(\g,\{l_k\}_{k=1}^{+\infty})$ induce pre-Lie$_\infty$-algebras. This generalizes the   result given in  \cite{Bai}.

\begin{thm}\label{relative-o-to-homotopy-pre-lie-thm}
Let $T\in \Hom(V,\g)$ be a strict homotopy relative Rota-Baxter operator on an $L_\infty$-algebra $(\g,\{l_k\}_{k=1}^{+\infty})$ with respect to the representation $(V,\{\rho_k\}_{k=1}^{+\infty})$. Then $(V,\{\oprn_k\}_{k=1}^{+\infty})$ is a pre-Lie$_\infty$-algebra, where  $\oprn_k:\otimes^k V\lon V$ $(k\ge 1)$  are   linear maps  of degree $1$ defined by
\begin{eqnarray}\label{relative-o-to-homotopy-pre-lie}
\oprn_k(v_1,\cdots,v_k):=\rho_k(Tv_1,\cdots,Tv_{k-1},v_k),\quad \forall v_1\cdots,v_k\in V.
\end{eqnarray}
\end{thm}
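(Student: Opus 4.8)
The plan is to deduce this from the MC-characterization of homotopy relative Rota-Baxter operators (Theorem \ref{hmotopy-o-operator-homotopy-lie}) together with the MC-characterization of pre-Lie$_\infty$-algebras (Theorem \ref{MC-of-homotopy-pre-lie}) and the Lie algebra morphism $\Phi$ of Theorem \ref{homotopy-pre-lie-to-homotopy-lie}. Since $T\in\Hom(V,\g)$ is a \emph{strict} homotopy relative Rota-Baxter operator, it is in particular an MC element of the $L_\infty$-algebra $(\h,\{\frkl_k\}_{k=1}^{+\infty})$ associated to the $V$-data with $\Delta=\sum_{k\ge1}(l_k+\rho_k)$; concretely, equation \eqref{mc-homotopy-o} holds for $\bar T$ the coderivation corresponding to $T$ (with all higher components vanishing). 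The first step is therefore to unwind what \eqref{rota-baxter-o-homotopy-lie} says about the twisted structure $e^{[\cdot,T]_\NR}\big(\sum_{k\ge1}(l_k+\rho_k)\big)$, which by Proposition \ref{twist-homotopy-lie}(i) is an MC element of $(C^*(\g\oplus V,\g\oplus V),[\cdot,\cdot]_\NR)$, hence an $L_\infty$-structure on $\g\oplus V$; restricting to $V$ gives, by Proposition \ref{twist-homotopy-lie}(ii), the $L_\infty$-structure $\{\frkl_{t+1}\}$ of \eqref{double-homotopy-lie}. For a strict $T$ only the $m=0$ term survives in \eqref{double-homotopy-lie}, so $\frkl_k(v_1,\dots,v_k)=\rho_k(Tv_1,\dots,Tv_{k-1},v_k)$ symmetrized, i.e. $\frkl_k=\Phi(\oprn_k)$ with $\oprn_k$ as in \eqref{relative-o-to-homotopy-pre-lie}.

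Next I would verify directly that $\sum_{k\ge1}\oprn_k\in\CV^*(V,V)$ is an MC element of the graded Lie algebra $(\CV^*(V,V),[\cdot,\cdot]_\MN)$; by Theorem \ref{MC-of-homotopy-pre-lie} this is exactly the statement that $(V,\{\oprn_k\}_{k=1}^{+\infty})$ is a pre-Lie$_\infty$-algebra. The cleanest route is to compute $(\oprn\diamond\oprn)(v_1,\dots,v_{i+j-1})$ from \eqref{graded-MN}, substitute $\oprn_k(\cdots)=\rho_k(T\cdot,\dots,T\cdot,\cdot)$, and then use the strict homotopy Rota-Baxter relation \eqref{rota-baxter-o-homotopy-lie} to rewrite the terms $\rho(T(\cdots),\dots)$ where the last slot is itself a $\rho$-expression: namely $T\rho_p(Tv_1,\dots,Tv_{i-1},Tv_{i+1},\dots,Tv_p,v_i)$ can be replaced, modulo a sum over $i$ with signs, by $l_p(Tv_1,\dots,Tv_p)$. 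After this substitution the whole expression reassembles into the representation axiom \eqref{sh-Lie-rep} for $(V,\{\rho_k\})$ evaluated at the arguments $Tv_1,\dots,Tv_{p-1},v_p$, which vanishes. So the MC equation for $\{\oprn_k\}$ in $\CV^*$ reduces to the $L_\infty$-representation identities that we are given, combined with the defining identity \eqref{rota-baxter-o-homotopy-lie} of $T$.

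Alternatively — and this may be the shortest writeup — one can argue at the level of the morphism $\Phi$. We know $\Phi(\sum\oprn_k)=\sum\frkl_k$ is an $L_\infty$-structure on $V$ (Proposition \ref{twist-homotopy-lie}(ii)), hence an MC element of $(C^*(V,V),[\cdot,\cdot]_\NR)$ by Theorem \ref{graded-Nijenhuis-Richardson-bracket}. Since $\Phi$ is a homomorphism of graded Lie algebras (Theorem \ref{homotopy-pre-lie-to-homotopy-lie}), $\Phi\big([\sum\oprn_k,\sum\oprn_k]_\MN\big)=[\sum\frkl_k,\sum\frkl_k]_\NR=0$, so $[\sum\oprn_k,\sum\oprn_k]_\MN\in\ker\Phi$. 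This does not immediately give vanishing, so one still has to check the MC equation in $\CV^*$ honestly; the point of the $\Phi$-picture is mainly to organize the bookkeeping of Koszul signs. I expect the main obstacle to be precisely this sign and shuffle bookkeeping: matching the shuffles $\mathbb S_{(j-1,1,i-2)}$ and $\mathbb S_{(i-1,j-1)}$ appearing in \eqref{graded-MN} with the shuffles in the $L_\infty$-representation identity \eqref{sh-Lie-rep}, and tracking the sign $\alpha=n(v_{\sigma(1)}+\cdots+v_{\sigma(i-1)})$ through the substitution of \eqref{rota-baxter-o-homotopy-lie}. A careful but routine induction on $p$ (the total number of inputs) handles this; no conceptual difficulty remains once the strict Rota-Baxter identity is in hand.
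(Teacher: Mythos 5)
Your middle paragraph is exactly the paper's proof: substitute \eqref{relative-o-to-homotopy-pre-lie} into the pre-Lie$_\infty$ identity, use the strict identity \eqref{rota-baxter-o-homotopy-lie} to convert the signed sums of $T\rho_i(T\cdots,v)$ terms into $l_i(Tv_{\tau(1)},\cdots,Tv_{\tau(i)})$, and recognize the result as the representation axiom \eqref{sh-Lie-rep} evaluated at $Tv_1,\cdots,Tv_{n-1},v_n$, which vanishes; you also rightly observe that the $\Phi$-morphism shortcut alone cannot conclude, since $\Phi$ need not be injective. (Minor slip in your auxiliary remark: for strict $T$ the surviving terms in \eqref{double-homotopy-lie} are those with all $k_i=1$, i.e.\ $m=t$, not $m=0$, though your conclusion $\frkl_k=\Phi(\oprn_k)$ is still correct.)
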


\begin{proof}
By the fact that $\rho_k$ is a linear map of degree $1$ from graded vector space $\Sym^{k-1}(\g)\otimes V$ to $V$, we deduce the graded symmetry condition of $\oprn_k$. Moreover, for all $v_1\cdots,v_n\in V$, we have
\begin{eqnarray*}
&&\sum_{i+j=n+1\atop i\ge1,j\geq2}\sum_{\sigma\in\mathbb S_{(i-1,1,j-2)}}\varepsilon(\sigma)\oprn_j(\oprn_i(v_{\sigma(1)},\cdots,v_{\sigma(i-1)},v_{\sigma(i)}), v_{\sigma(i+1)},\cdots,v_{\sigma(n-1)},v_{n})\\
&&+\sum_{i+j=n+1\atop i\geq1,j\geq1}\sum_{\sigma\in\mathbb S_{(j-1,i-1)}}(-1)^{v_{\sigma(1)}+v_{\sigma(2)}+\cdots+v_{\sigma(j-1)}}\varepsilon(\sigma)\oprn_j(v_{\sigma(1)},\cdots,v_{\sigma(j-1)},\oprn_i( v_{\sigma(j)},\cdots,v_{\sigma(n-1)},v_{n}))\\
&\stackrel{\eqref{relative-o-to-homotopy-pre-lie}}{=}&\sum_{i+j=n+1\atop i\ge1,j\geq2}\sum_{\sigma\in\mathbb S_{(i-1,1,j-2)}}\varepsilon(\sigma)\rho_j\big(T\rho_i(Tv_{\sigma(1)},\cdots,Tv_{\sigma(i-1)},v_{\sigma(i)}), Tv_{\sigma(i+1)},\cdots,Tv_{\sigma(n-1)},v_{n}\big)\\
&&+\sum_{i+j=n+1\atop i\geq1,j\geq1}\sum_{\sigma\in\mathbb S_{(j-1,i-1)}}(-1)^{v_{\sigma(1)}+v_{\sigma(2)}+\cdots+v_{\sigma(j-1)}}\varepsilon(\sigma)\rho_j\big(Tv_{\sigma(1)},\cdots,Tv_{\sigma(j-1)},\rho_i( Tv_{\sigma(j)},\cdots,Tv_{\sigma(n-1)},v_{n})\big)\\
&=&\sum_{i+j=n+1\atop i\ge1,j\geq2}\sum_{\tau\in\mathbb S_{(i,j-2)}}\sum_{s=1}^{i}(-1)^{v_{\tau(s)}(v_{\tau(s+1)}+\cdots+v_{\tau(i)})}\varepsilon(\tau)\cdot\\&&\rho_j\big(T\rho_i(Tv_{\tau(1)},\cdots,\hat{T}v_{\tau(s)},\cdots,Tv_{\tau(i)},v_{\tau(s)}), Tv_{\tau(i+1)},\cdots,Tv_{\tau(n-1)},v_{n}\big)\\
&&+\sum_{i+j=n+1\atop i\geq1,j\geq1}\sum_{\tau\in\mathbb S_{(j-1,i-1)}}(-1)^{v_{\tau(1)}+v_{\tau(2)}+\cdots+v_{\tau(j-1)}}\varepsilon(\tau)\rho_j\big(Tv_{\tau(1)},\cdots,Tv_{\tau(j-1)},\rho_i( Tv_{\tau(j)},\cdots,Tv_{\tau(n-1)},v_{n})\big)\\
&\stackrel{\eqref{rota-baxter-o-homotopy-lie}}{=}&\sum_{i+j=n+1\atop i\ge1,j\geq2}\sum_{\tau\in\mathbb S_{(i,j-2)}}\varepsilon(\tau)\rho_j\big(l_i(Tv_{\tau(1)},\cdots,Tv_{\tau(i)}), Tv_{\tau(i+1)},\cdots,Tv_{\tau(n-1)},v_{n}\big)\\
&&+\sum_{i+j=n+1\atop i\geq1,j\geq1}\sum_{\tau\in\mathbb S_{(j-1,i-1)}}(-1)^{v_{\tau(1)}+v_{\tau(2)}+\cdots+v_{\tau(j-1)}}\varepsilon(\tau)\rho_j\big(Tv_{\tau(1)},\cdots,Tv_{\tau(j-1)},\rho_i( Tv_{\tau(j)},\cdots,Tv_{\tau(n-1)},v_{n})\big)\\
&\stackrel{\eqref{sh-Lie-rep}}{=}&0.
\end{eqnarray*}
Thus,   $(V,\{\oprn_k\}_{k=1}^{+\infty})$ is a pre-Lie$_\infty$-algebra.
\end{proof}

\begin{cor}
With the above conditions, the linear map $T$ is a strict $L_\infty$-algebra homomorphism from the sub-adjacent $L_\infty$-algebra $V^C$ to the initial $L_\infty$-algebra $(\g,\{l_k\}_{k=1}^{+\infty})$.
\end{cor}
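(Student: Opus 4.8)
The plan is to reduce the statement to one family of identities and then recognize those identities as the definition of a strict homotopy relative Rota-Baxter operator. Recall that a degree $0$ linear map $T\colon V^C\to\g$ underlies a \emph{strict} $L_\infty$-algebra homomorphism precisely when all the structure relations for $L_\infty$-morphisms with vanishing higher Taylor coefficients hold, which (see e.g. \cite{LM}) amounts to requiring $T\bigl(l_k^{V^C}(v_1,\cdots,v_k)\bigr)=l_k(Tv_1,\cdots,Tv_k)$ for every $k\geq 1$ and all homogeneous $v_1,\cdots,v_k\in V$. Here $l_k^{V^C}$ denotes the $k$-ary bracket of the sub-adjacent $L_\infty$-algebra $V^C$ of the pre-Lie$_\infty$-algebra $(V,\{\oprn_k\}_{k=1}^{+\infty})$ furnished by Theorem~\ref{relative-o-to-homotopy-pre-lie-thm}.

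First I would write the left-hand side explicitly. By Corollary~\ref{homotopy-pre-lie-to-lie-cor}, $l_k^{V^C}=\Phi(\oprn_k)$, so combining \eqref{homotopy-pre-lie-to-lie} with the formula \eqref{relative-o-to-homotopy-pre-lie} for $\oprn_k$ gives
\[
l_k^{V^C}(v_1,\cdots,v_k)=\sum_{i=1}^{k}(-1)^{v_i(v_{i+1}+\cdots+v_k)}\,\rho_k(Tv_1,\cdots,Tv_{i-1},Tv_{i+1},\cdots,Tv_k,v_i).
\]
Applying $T$ and comparing with the defining equation \eqref{rota-baxter-o-homotopy-lie} of a strict homotopy relative Rota-Baxter operator, taken with $p=k$, shows that the right-hand side equals $l_k(Tv_1,\cdots,Tv_k)$. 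Hence $T\circ l_k^{V^C}=l_k\circ(T\otimes\cdots\otimes T)$ for all $k\geq 1$, which is the claim. (Alternatively, one can specialize part (iii) of Proposition~\ref{twist-homotopy-lie} to the strict case, after checking that the $L_\infty$-structure \eqref{double-homotopy-lie} on $V$ then collapses, via the graded symmetry of $\rho_k$ in its first $k-1$ slots, to the brackets of $V^C$.)

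I do not expect any genuine obstacle. No sign bookkeeping is required, since the Koszul sign $(-1)^{v_i(v_{i+1}+\cdots+v_k)}$ introduced by $\Phi$ is exactly the sign occurring in \eqref{rota-baxter-o-homotopy-lie}, so the two expressions agree term by term with no reindexing. The only step worth a full sentence in the write-up is the initial reduction — identifying strict $L_\infty$-morphisms with linear maps commuting with all brackets — after which the proof is a one-line substitution.
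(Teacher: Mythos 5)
Your proposal is correct and is essentially the paper's own argument: the paper proves this corollary by citing Theorem \ref{relative-o-to-homotopy-pre-lie-thm} and Corollary \ref{homotopy-pre-lie-to-lie-cor}, i.e.\ exactly the substitution of \eqref{relative-o-to-homotopy-pre-lie} into \eqref{homotopy-pre-lie-to-lie} and comparison with \eqref{rota-baxter-o-homotopy-lie} that you carry out explicitly. Your sign check and the reduction to ``$T$ commutes with all brackets'' are both sound, so nothing is missing.
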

\begin{proof}
It follows from Theorem \ref{relative-o-to-homotopy-pre-lie-thm} and Corollary \ref{homotopy-pre-lie-to-lie-cor}.
\end{proof}
 At the end of this section, we give the necessary and sufficient conditions on an $L_\infty$-algebra admitting a compatible pre-Lie$_\infty$-algebra.

\begin{pro}\label{compatible homotopy-pre-Lie-and-rota-baxter}
Let $(\g,\{l_k\}_{k=1}^{+\infty})$ be an $L_\infty$-algebra. Then there exists a compatible pre-Lie$_\infty$-algebra if and only if there exists an invertible strict homotopy relative Rota-Baxter operator on $(\g,\{l_k\}_{k=1}^{+\infty})$.
\end{pro}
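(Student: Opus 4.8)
The plan is to prove the two implications separately, in each case reducing to results already established in this subsection. For the \emph{only if} direction, assume $(\g,\{\oprn_k\}_{k=1}^{+\infty})$ is a compatible pre-Lie$_\infty$-algebra structure on $(\g,\{l_k\}_{k=1}^{+\infty})$, so that $\g^C=(\g,\{l_k\}_{k=1}^{+\infty})$ in the notation of Corollary \ref{homotopy-pre-lie-to-lie-cor}. I would simply invoke Proposition \ref{homotopy-pre-lie-id-o}: with $L_k$ defined by \eqref{left-multiplication}, the pair $(\g,\{L_k\}_{k=1}^{+\infty})$ is a representation of $\g^C$ and the identity map $\Id:\g\to\g$ is a strict homotopy relative Rota-Baxter operator on $\g^C$ with respect to it. Since $\Id$ is invertible, this exhibits the required operator and concludes this direction.

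For the \emph{if} direction, let $T\in\Hom(V,\g)$ be an invertible strict homotopy relative Rota-Baxter operator on $(\g,\{l_k\}_{k=1}^{+\infty})$ with respect to a representation $(V,\{\rho_k\}_{k=1}^{+\infty})$. By Theorem \ref{relative-o-to-homotopy-pre-lie-thm}, the maps $\oprn_k(v_1,\cdots,v_k):=\rho_k(Tv_1,\cdots,Tv_{k-1},v_k)$ define a pre-Lie$_\infty$-algebra structure on $V$, and by the corollary following Theorem \ref{relative-o-to-homotopy-pre-lie-thm} the map $T$ is a strict $L_\infty$-homomorphism from the sub-adjacent $L_\infty$-algebra $V^C$ to $(\g,\{l_k\}_{k=1}^{+\infty})$; being invertible, $T$ is a strict $L_\infty$-isomorphism. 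The idea is then to transport the pre-Lie$_\infty$ structure along $T$, setting $\oprn'_k(x_1,\cdots,x_k):=T\oprn_k(T^{-1}x_1,\cdots,T^{-1}x_k)$ for $x_1,\cdots,x_k\in\g$. I would then check first that $\{\oprn'_k\}_{k=1}^{+\infty}$ is a pre-Lie$_\infty$-algebra structure on $\g$: this follows from Theorem \ref{MC-of-homotopy-pre-lie} once one observes that an invertible degree $0$ linear map induces, by conjugation, an isomorphism of graded Lie algebras $(\CV^*(V,V),[\cdot,\cdot]_{\MN})\cong(\CV^*(\g,\g),[\cdot,\cdot]_{\MN})$ (and similarly $(C^*(V,V),[\cdot,\cdot]_{\NR})\cong(C^*(\g,\g),[\cdot,\cdot]_{\NR})$), hence carries \MC elements to \MC elements. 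It then remains to identify the sub-adjacent $L_\infty$-algebra of $(\g,\{\oprn'_k\}_{k=1}^{+\infty})$ with $(\g,\{l_k\}_{k=1}^{+\infty})$: since the symmetrization map $\Phi$ of Theorem \ref{homotopy-pre-lie-to-homotopy-lie} intertwines the two conjugation isomorphisms, one has $\Phi(\oprn'_k)(x_1,\cdots,x_k)=T\bigl(\Phi(\oprn_k)(T^{-1}x_1,\cdots,T^{-1}x_k)\bigr)$; by Corollary \ref{homotopy-pre-lie-to-lie-cor} the right-hand side is $T$ applied to the $V^C$-bracket of $T^{-1}x_1,\cdots,T^{-1}x_k$, which equals $l_k(x_1,\cdots,x_k)$ precisely because $T$ is a strict $L_\infty$-isomorphism $V^C\to(\g,\{l_k\}_{k=1}^{+\infty})$. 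Hence $(\g,\{\oprn'_k\}_{k=1}^{+\infty})$ is a compatible pre-Lie$_\infty$-algebra on $(\g,\{l_k\}_{k=1}^{+\infty})$.

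All the computations involved are routine; the only delicate point — and the natural place for a slip — is the naturality bookkeeping, namely that both the symmetrization $\Phi$ and the assignment ``homotopy relative Rota-Baxter operator $\mapsto$ induced pre-Lie$_\infty$ structure'' are compatible with transport along the invertible map $T$. I would handle this cleanly by working throughout with the conjugation isomorphisms on $C^*(-,-)$ and $\CV^*(-,-)$ and verifying directly that $\Phi$ intertwines them, which reduces the whole statement to Theorems \ref{relative-o-to-homotopy-pre-lie-thm}, \ref{homotopy-pre-lie-to-homotopy-lie} and Proposition \ref{homotopy-pre-lie-id-o}.
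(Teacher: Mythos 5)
Your proposal is correct and follows essentially the same route as the paper: the converse direction is exactly the paper's appeal to Proposition \ref{homotopy-pre-lie-id-o} (the identity map on $\g^C$), and the forward direction transports the pre-Lie$_\infty$ structure of Theorem \ref{relative-o-to-homotopy-pre-lie-thm} from $V$ to $\g$ along the invertible $T$, obtaining $\Oprn_k(x_1,\cdots,x_k)=T\rho_k(x_1,\cdots,x_{k-1},T^{-1}x_k)$. The only cosmetic difference is that the paper checks compatibility by a one-line computation with the strict Rota-Baxter identity \eqref{rota-baxter-o-homotopy-lie}, whereas you package the same computation via the corollary that $T$ is a strict $L_\infty$-isomorphism $V^C\to(\g,\{l_k\}_{k=1}^{+\infty})$ together with the (easily verified) fact that $\Phi$ intertwines conjugation by $T$.
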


\begin{proof}
Let $T$ be an invertible strict homotopy relative Rota-Baxter operator on $(\g,\{l_k\}_{k=1}^{+\infty})$ with respect to a representation $(V,\{\rho_k\}_{k=1}^{+\infty})$. By Theorem \ref{relative-o-to-homotopy-pre-lie-thm}, $(V,\{\oprn_k\}_{k=1}^{+\infty})$ is a pre-Lie$_\infty$-algebra structure, where $\oprn_k$ is defined by \eqref{relative-o-to-homotopy-pre-lie}.
Since $T$ is an invertible linear map, there is an isomorphic pre-Lie$_\infty$-algebra structure $ \{\Oprn_k\}_{k=1}^{+\infty} $ on $\g$  given by
\begin{eqnarray}\label{eq:Tkinv}
\Oprn_k(x_1,\cdots,x_k):=T\oprn_k(T^{-1}x_1,\cdots,T^{-1}x_{k-1},T^{-1}x_k)=T\rho_k(x_1,\cdots,x_{k-1},T^{-1}x_k)
\end{eqnarray}
for all $x_1\cdots,x_k\in \g.$ Since $T$ is a strict homotopy relative Rota-Baxter operator, we have
\begin{eqnarray*}
l_k(x_1,\cdots,x_{k-1},x_{k})&=&\sum_{i=1}^{k}(-1)^{(x_{i+1}+\cdots+x_k)x_i}T\rho_{k}(x_1,\cdots,x_{i-1},x_{i+1},\cdots,x_k,T^{-1}x_i)\\
                             &=&\sum_{i=1}^{k}(-1)^{(x_{i+1}+\cdots+x_k)x_i}\Oprn_k(x_{1},\cdots,\hat{x}_{i},\cdots,x_{k},x_i).
\end{eqnarray*}
Therefore $(\g,\{\Oprn_k\}_{k=1}^{+\infty})$ is a compatible pre-Lie$_\infty$-algebra of $(\g,\{l_k\}_{k=1}^{+\infty})$.

Conversely, by Proposition \ref{homotopy-pre-lie-id-o}, the identity map $\Id$ is a strict homotopy relative Rota-Baxter operator on the sub-adjacent $L_\infty$-algebra $\g^C$ with respect to the representation $(\g,\{L_k\}_{k=1}^{+\infty})$.
\end{proof}

\noindent{\em Acknowledgements.} This research was partially supported by NSFC (11922110). R. Tang is also Funded by China Postdoctoral Science Foundation (2020M670833). This work was completed in part while the first author was visiting Max Planck Institute for Mathematics in Bonn and he wishes to thank this institution for excellent working conditions.


\begin{thebibliography}{a}

 \bibitem{Arnal}
 D. Arnal,  Simultaneous deformations of a Lie algebra and its modules. Differential geometry and mathematical physics (Liege, 1980/Leuven, 1981), 3-15, \emph{Math. Phys. Stud.}, 3, Reidel, Dordrecht, 1983.





\bibitem{Bai}
C. Bai, A unified algebraic approach to the classical Yang-Baxter equation. \emph{J. Phys. A: Math. Theor.} {\bf 40} (2007), 11073-11082.

\bibitem{BBGN} C. Bai, O. Bellier, L. Guo and X. Ni, Spliting of
operations, Manin products and Rota-Baxter operators.  {\em Int. Math. Res. Not.} {\bf 3} (2013), 485-524.

\bibitem{Bal} D. Balavoine, Deformations of algebras over a quadratic operad. Operads: Proc. of Renaissance Conferences (Hartford, CT/Luminy, 1995), \emph{Contemp. Math.} {\em 202} Amer. Math. Soc., Providence, RI, 1997, 207-34.

\bibitem{Barmeier}
S. Barmeier and Y. Fr\'egier, Deformation-obstruction theory for diagrams of algebras and applications to geometry. to appear in {\em J. Noncommut. Geom.} arXiv:1806.05142.


\bibitem{Ba} G. Baxter, An analytic problem whose solution follows from a simple algebraic identity. \emph{Pacific J. Math.} {\bf 10} (1960), 731-742.


\bibitem{Bor}
M. Bordemann, Generalized Lax pairs, the modified classical Yang-Baxter equation, and affine geometry
of Lie groups. \emph{Comm. Math. Phys.} {\bf 135} (1990), 201-216.

\bibitem{Borisov}
D. V. Borisov, Formal deformations of morphisms of associative algebras. {\em Int. Math. Res. Not.} {\bf 41} (2005), 2499-2523.


\bibitem{CL}
F. Chapoton and M. Livernet, Pre-Lie algebras and the rooted trees operad. {\em  Int. Math. Res. Not.}  {\bf 8} (2001), 395-408.

\bibitem{CP}
V. Chari and A. Pressley, A Guide to Quantum Groups. Cambridge University Press, 1994.

\bibitem{Ch-Ei}
C. Chevalley and S. Eilenberg,
\newblock Cohomology theory of {L}ie groups and {L}ie algebras.
\newblock {\em Trans. Amer. Math. Soc.} {\bf 63} (1948), 85-124.

\bibitem{CK}
A. Connes and D. Kreimer, { Renormalization in quantum field theory and the Riemann-Hilbert problem. I. The Hopf algebra structure of graphs and the main theorem.} {\em Comm. Math. Phys.} {\bf 210} (2000), 249-273.


\bibitem{Dolgushev-Rogers}
V. A. Dolgushev and C. L. Rogers, A version of the Goldman-Millson Theorem
for filtered $L_\infty$-algebras. {\em J. Algebra}  {\bf 430} (2015), 260-302.

\bibitem{Doubek-Markl-Zima}
M. Doubek, M. Markl and P. Zima, Deformation theory (lecture notes). \emph{ Arch. Math. (Brno)} {\bf 43} (2007),  333-371.

\bibitem{DK}
V. Dotsenko and A. Khoroshkin, Quillen homology for operads via Gr\"{o}bner bases. \emph{Doc. Math.} {\bf 18} (2013), 707-747.


\bibitem{Fard}
K Ebrahimi-Fard, D. Manchon and F. Patras, A noncommutative Bohnenblust-Spitzer identity for Rota-Baxter algebras solves Bogoliubov's counterterm recursion.  {\em J. Noncommut. Geom.} {\bf 3} (2009), 181-222.


\bibitem{Fregier}
Y. Fr\'egier, M. Markl and D. Yau,   The
$L_\infty$-deformation complex of diagrams of algebras. \emph{New York J. Math.} {\bf 15} (2009), 353-392.

\bibitem{Fregier-Zambon-1}
Y. Fr\'egier, and M. Zambon, Simultaneous deformations and Poisson geometry. \emph{ Compos. Math.} {\bf 151} (2015), 1763-1790.

\bibitem{Fregier-Zambon-2}
Y. Fr\'egier, and M. Zambon, Simultaneous deformations of algebras and morphisms via derived brackets. \emph{J. Pure Appl. Algebra} {\bf 219 } (2015), 5344-5362.



\bibitem{Ge0}
M. Gerstenhaber, The cohomology structure of an associative ring. \emph{Ann. Math.} {\bf 78} (1963), 267-288.

\bibitem{Ge}
M. Gerstenhaber, On the deformation of rings and algebras. \emph{Ann. Math. (2) } {\bf 79} (1964), 59-103.

\bibitem{Getzler}
E. Getzler, Lie theory for nilpotent $L_{\infty}$-algebras. {\em Ann.   Math. (2)} {\bf 170} (2009), 271-301.

\bibitem{Goncharov}
M. E. Goncharov and P. S. Kolesnikov, Simple finite-dimensional double algebras. {\em J. Algebra}  {\bf 500} (2018), 425-438.

\bibitem{GLST} A. Guan, A. Lazarev, Y. Sheng, and R. Tang, Review of deformation theory II: a homotopical approach.  	\emph{Adv. Math. (China)} {\bf 49} (2020), 278-298.

\bibitem{Gub-AMS}
L. Guo,  What is a Rota-Baxter algebra? {\em Notices of the AMS} {\bf 56}  (2009), 1436-1437.



\bibitem{Gub}
L. Guo,  An introduction to Rota-Baxter algebra. Surveys of Modern Mathematics, 4. International Press, Somerville, MA; Higher Education Press, Beijing, 2012. xii+226 pp.

\bibitem{Gu0-1}
L. Guo, Properties of Free Baxter Algebras. \emph{Adv. Math.}  {\bf 151}  (2000), 346-374.


\bibitem{Hamilton-Lazarev}
A. Hamilton and A. Lazarev, Cohomology theories for homotopy algebras and noncommutative geometry. \emph{Algebr. Geom. Topol.} {\bf 9} (2009), 1503-1583.

\bibitem{Har}
D.~K. Harrison,
\newblock Commutative algebras and cohomology.
\newblock {\em Trans. Amer. Math. Soc.}  {\bf 104} (1962), 191-204.


\bibitem{Hor}
G.~Hochschild,
\newblock On the cohomology groups of an associative algebra.
\newblock {\em Ann.  Math. (2)}  {\bf 46} (1945), 58-67.



\bibitem{KSo}
M. Kontsevich and Y. Soibelman, Deformation theory. I [Draft], http://www.math.ksu.edu/~soibel/Book-vol1.ps, 2010.

\bibitem{Kosmann-Schwarzbach}
 Y. Kosmann-Schwarzbach, From Poisson algebras to Gerstenhaber algebras. {\em Ann. Inst. Fourier (Grenoble) } {\bf 46} (1996), 1243-1274.


\bibitem{Ku}
B. A. Kupershmidt, What a classical $r$-matrix really is. \emph{J.
Nonlinear Math. Phys.} {\bf 6} (1999), 448-488.

\bibitem{LS}
T. Lada and J. Stasheff, Introduction to sh Lie algebras for
physicists. \emph{Internat. J. Theoret. Phys.} {\bf 32} (1993), 1087-1103.

\bibitem{LM}
T. Lada and M. Markl,  Strongly homotopy Lie algebras. \emph{ Comm. Algebra} {\bf 23} (1995),  2147-2161.



\bibitem{Lazarev-1}
A. Lazarev, Y. Sheng and R. Tang, Homotopy Rota-Baxter algebras, triangular $L_\infty$-bialgebras and higher derived brackets. \texttt{arXiv:2008.00059.}


\bibitem{LV}
J.-L. Loday and B. Vallette, Algebraic Operads. Springer, 2012.

\bibitem{Lu} J. Lurie, \emph{ DAG X: Formal moduli problems}, available at http://www.math.harvard.edu/~lurie/papers/DAG-X.pdf

\bibitem{Ma-0}
M. Markl, Intrinsic brackets and the $L_{\infty}$-deformation theory of bialgebras. \emph{J. Homotopy Relat. Struct.} {\bf5} (2010), 177-212.


\bibitem{Ma}
M. Markl, Deformation Theory of Algebras and Their Diagrams. \emph{Regional Conference Series in Mathematics}, Number 116, American Mathematical Society (2011).

\bibitem{MSS} M. Markl, S. Shnider and J. D. Stasheff, Operads in
    Algebra, Topology and Physics. American Mathematical Society, Providence, RI, 2002.

\bibitem{Mer}
S. A. Merkulov, Nijenhuis infinity and contractible differential graded manifolds. \emph{ Compos. Math.} {\bf 141} (2005),  1238-1254.




\bibitem{NR} A. Nijenhuis  and R. Richardson,  Cohomology and deformations in graded Lie algebras. {\em Bull.
Amer. Math. Soc.} {\bf 72} (1966), 1-29.

\bibitem{NR2} A. Nijenhuis and R. Richardson,  Commutative  algebra cohomology and deformations of Lie and associative algebras. {\em J. Algebra} {\bf 9} (1968), 42-105.

\bibitem{PBG} J. Pei, C. Bai and L. Guo, Splitting of Operads and
Rota-Baxter Operators on Operads. \emph{Appl. Categor. Struct.}
{\bf 25} (2017), 505-538.

\bibitem{Pr} J. P. Pridham, Unifying derived deformation theories. \emph{Adv. Math.} {\bf 224} (2010), 772-826.


\bibitem{STS} M.~A. Semyonov-Tian-Shansky, What is a
classical R-matrix? \emph{Funct. Anal. Appl.} {\bf 17} (1983), 259-272.


\bibitem{Sheng}
Y. Sheng, Categorification of pre-Lie algebras and solutions of $2$-graded classical Yang-Baxter equations. \emph{Theory Appl.  Categ.} {\bf 34} (2019), 269-294.

\bibitem{Sta63} J. Stasheff, Homotopy associativity of H-spaces. I, II. \emph{Trans. Amer. Math. Soc.} {\bf 108} (1963), 275-292; ibid. {\bf 108} (1963), 293-312.

\bibitem{Stasheff}
J. Stasheff, The intrinsic bracket on the deformation complex of an associative algebra. \emph{J. Pure Appl. Algebra}  {\bf89} (1993), 231-235.

\bibitem{stasheff:shla} J. Stasheff,  Differential graded {L}ie algebras, quasi-Hopf algebras and higher homotopy algebras.   \emph{Quantum groups (Leningrad, 1990)}, 120-137, Lecture Notes in Math., 1510, \emph{ Springer, Berlin,} 1992.

\bibitem{St19} J. Stasheff, $L$-infinity and $A$-infinity structures. {\em   High. Struct.} {\bf 3} (2019), 292-326.


\bibitem{TBGS-1}
R. Tang, C. Bai, L. Guo and Y. Sheng, Deformations and their controlling cohomologies of $\huaO$-operators. \emph{Comm. Math. Phys.} {\bf 368} (2019), 665-700.



\bibitem{Vo}
Th. Voronov, Higher derived brackets and homotopy algebras. \emph{J. Pure Appl. Algebra}  {\bf 202} (2005), 133-153.

\bibitem{Yu-Guo}
H. Yu, L. Guo and J.-Y. Thibon, Weak quasi-symmetric functions, Rota-Baxter algebras and Hopf algebras. \emph{Adv. Math.} {\bf 344} (2019), 1-34.



\end{thebibliography}
\end{document}